\newtheorem{thm}{Theorem}[section]
\newtheorem{prop}[thm]{Proposition}
\newtheorem{lem}[thm]{Lemma}
\newtheorem{cor}[thm]{Corollary}
\theoremstyle{definition}
\newtheorem{defn}[thm]{Definition}
\newtheorem{ex}[thm]{Example}
\newtheorem{rem}[thm]{Remark}
\numberwithin{equation}{section}
\def\0{\mathbf{0}}
\newcommand{\bba}{\mathbf{a}}
\newcommand{\bbb}{\mathbf{b}}
\newcommand{\bbd}{\mathbf{d}}
\newcommand{\bbc}{\mathbf{c}}
\newcommand{\bbe}{\mathbf{e}}
\newcommand{\bbh}{\mathbf{h}}
\newcommand{\bbm}{\mathbf{m}}
\newcommand{\bbn}{\mathbf{n}}
\newcommand{\bbr}{\mathbf{r}}
\newcommand{\bbu}{\mathbf{u}}
\newcommand{\bbv}{\mathbf{v}}
\newcommand{\bbw}{\mathbf{w}}
\newcommand{\bbx}{\mathbf{x}}
\newcommand{\bby}{\mathbf{y}}
\newcommand{\one}{\mathbf{1}}
\newcommand{\des}{\mathrm{des}}
\newcommand{\asc}{\mathrm{asc}}
\newcommand{\conv}{\mathrm{conv}}
\newcommand{\cone}{\mathrm{co}}
\newcommand{\vol}{\mathrm{vol}}
\newcommand{\ncone}{\mathrm{ncone}}
\newcommand{\pf}{\mathrm{PF}}
\newcommand{\Z}{\mathbb Z}
\newcommand{\N}{\mathbb N}
\newcommand{\R}{\mathbb R}
\newcommand\cov{\mathrel{\ooalign{$\prec$\cr
  \hidewidth\raise0.0ex\hbox{$\cdot\mkern0.9mu$}\cr}}}
\def\P{\mathbb P}
\newcommand{\calA}{\mathcal{A}}
\newcommand{\calB}{\mathcal{B}}
\newcommand{\calC}{\mathcal{C}}
\newcommand{\calD}{\mathcal{D}}
\newcommand{\calF}{\mathcal{F}}
\newcommand{\calM}{\mathcal{M}}
\newcommand{\calX}{\mathcal{X}}
\newcommand{\ssigma}{\Tilde{\sigma}}
\newcommand{\curly}{\mathrel{\leadsto}}
\newcommand{\fS}{\mathfrak S}
\def\bwp{\mathcal{SBP}}
\newcommand\commentout[1]{}
\def\type{\operatorname{type}}
\newcommand\textbox[1]{\parbox{0.75\textwidth}{\raggedright #1}}
\title{Parking Function Polytopes}
\author{Fu Liu}
\address{Fu Liu, Department of Mathematics, University of California, Davis, One Shields Avenue, Davis, CA 95616 USA.}
\email{fuliu@ucdavis.edu}
\author{Warut Thawinrak}
\address{Warut Thawinrak, Beijing International Center for Mathematical Research, Beijing, China.}
\email{warutthawinrak@gmail.com}
\date{}
\begin{document}

\begin{abstract}
We extend the notion of parking function polytopes and study their geometric and combinatorial structure, including normal fans, face posets, and $h$-polynomials, as well as their connections to other classes of polytopes. To capture their combinatorial features, we introduce generalizations of ordered set partitions, called binary partitions and skewed binary partitions. Using properties of preorder cones, we characterize the skewed binary partitions that are in bijection with the cones of the normal fan of a parking function polytope. This description of the normal fan yields an explicit formula for the $h$-polynomials of simple parking function polytopes in terms of generalized Eulerian polynomials. Finally, we relate parking function polytopes to several well-known polytopes, leading to additional results, including formulas for their volumes and Ehrhart polynomials.

\end{abstract}

\keywords{Parking function, polytope, normal fan, face poset, $h$-polynomial, Ehrhart polynomial}
\maketitle
\thispagestyle{empty}
\setlength\parindent{24pt}

\section{Introduction}

Suppose that $\bbu = (u_1, \dots, u_n) \in \R^n_{\geq 0}$ is a vector satisfying $0 \leq u_1 \leq \cdots \leq u_n.$ For $\bba = (a_1, \dots, a_n) \in \R^n_{\geq 0}$, let $b_1 \leq b_2 \leq \dots \leq b_n$ denote the non-decreasing rearrangement of its entries. We call $\bba$ a $\bbu$-\emph{parking function} if $b_i \leq u_i$ for all $i = 1, \dots, n.$ The \emph{parking function polytope} associated to $\bbu$, denoted by $\pf(\bbu)$, is defined as the convex hull of all $\bbu$-parking functions. If $\bbu \neq \mathbf{0}$, then $\pf(\bbu)$ contains $n+1$ affinely independent points $\mathbf{0}$, $(u_n,0, 0, \dots, 0), (0, u_n, 0, \dots, 0), \dots,$ $ (0, 0, \dots, 0, u_n)$. Therefore, $\pf(\bbu)$ is $n$-dimensional for all $\bbu \in \R^n_{\geq 0}\backslash\{\0\}.$ Thus, for non-triviality, we will always assume that $\bbu$ is a nonzero vector.

\begin{center}
    \begin{figure}[ht]
        \centering
        \includegraphics[scale = 0.24]{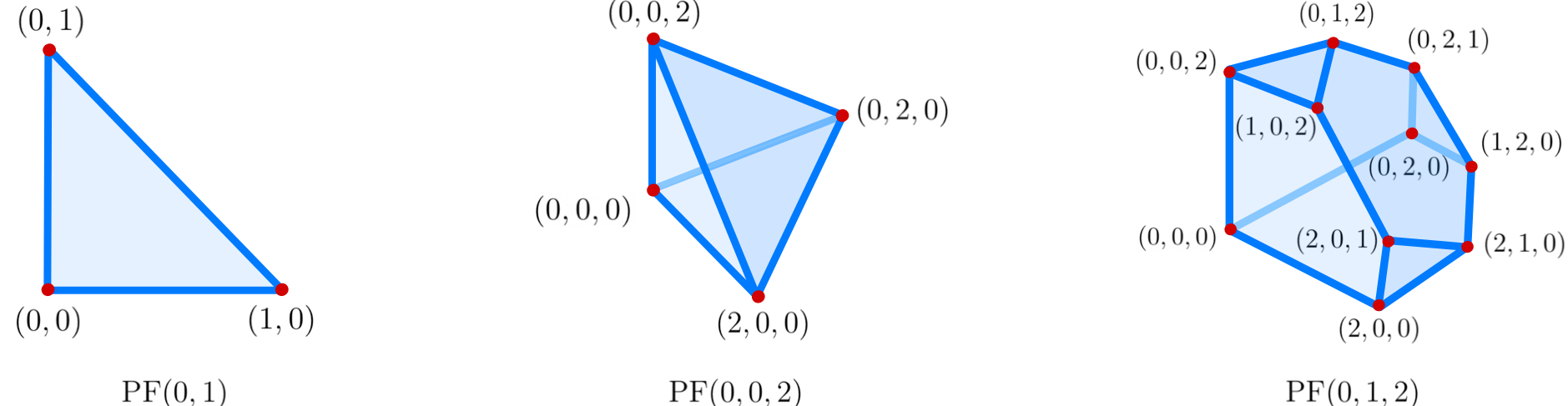}
        \caption{Three examples of parking function polytopes}
        \label{fig:enter-label}
    \end{figure}
\end{center}

Classical parking functions correspond to the case $\bbu=(0,1,\dots, n-1)$: a parking function of length $n$ is a sequence of positive integers $(a_1, \dots, a_n)$ whose non-decreasing rearrangement $b_1 \leq \cdots \leq b_n$ satisfies $b_i \leq i$ for all $i \in [n]$ where $[n]:=\{1, 2, \dots, n\}.$ Since their introduction by Konheim and Weiss as a model for parking \cite{Konheim1966AnOD}, parking functions have played a central role in algebraic and enumerative combinatorics, with deep connections to labeled trees \cite{Chassaing2001}, hyperplane arrangements, and noncrossing partitions \cite{Stanley1997,Stanley1998}. Stanley later defined the \emph{parking function polytope} as the convex hull of all classical parking functions \cite[Problem~12191]{stanley2020question}, and asked questions concerning its faces, volume, and lattice points. These questions were answered by Amanbayeva and Wang \cite{Amanbayeva_2021}.
More recently, Hanada et al.~\cite{hanada2024} and Bayer et al.~\cite{bayer2024parkingfunctionpolytopes} studied a broader family of parking function polytopes 
$\pf(\bbu)$ in which $u_1, \dots, u_n$ are strictly increasing integers. Their work focused on the combinatorial and geometric structure of these polytopes, including formulas for volumes and $h$-polynomials, and connections to other well-known families of polytopes.
The definition we introduced here further generalizes this framework by allowing $\bbu$ to be an arbitrary nondecreasing vector of nonnegative real numbers.

\medskip \noindent\textbf{Main contributions.} Our first contribution is the introduction of a new combinatorial model, \emph{binary partitions}, which generalize ordered set partitions. Via their associated preorder cones, we define a natural contraction relation on binary partitions and, in Theorem~\ref{thm: graph-characterization}, characterize this relation in terms of an associated bipartite graph.
We then present a speicial subclass, called \emph{skewed binary partitions}, and associate to each vector $\bbu$ a \emph{multiplicity vector} $\bbm(\bbu)$. One of our main results, Theorem~\ref{thm: fulldimcones}, establishes a bijection between skewed binary partitions determined by $\bbm(\bbu)$ and the vertices of $\pf(\bbu)$, together with an explicit description of the normal cone at each vertex. As consequences, we obtain a complete description of the normal fan and the face poset of parking function polytopes, and show that the combinatorial type of $\pf(\bbu)$ depends only on the multiplicity vector $\bbm(\bbu)$.
Another important corollary of Theorem \ref{thm: fulldimcones} is the following proposition. 
\begin{prop}\label{prop: pf-stellahedron}
	The normal fan of every parking function polytope $\pf(\bbu)$ is a coarsening of the normal fan of $\pf(1, 2, \dots, n)$. Equivalently, each $\pf(\bbu)$ may be viewed as a deformation of $\pf(1, 2, \dots, n)$. 
\end{prop}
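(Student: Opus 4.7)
The plan is to apply Theorem \ref{thm: fulldimcones} to both $\pf(\bbu)$ and $\pf(1, 2, \ldots, n)$ and compare their normal fans directly through the bijection with skewed binary partitions. Since $(1, 2, \ldots, n)$ has strictly increasing positive entries, its multiplicity vector $\bbm(1, 2, \ldots, n)$ is the finest possible: there are no zero entries and no repeated values. For a general nonzero $\bbu$, the vector $\bbm(\bbu)$ groups together the indices at which $\bbu$ shares a common value (and records the zero block separately), yielding a strictly coarser multiplicity structure.

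The key step is to construct a natural ``refinement map'' that, given a skewed binary partition $\pi$ determined by $\bbm(\bbu)$, produces the collection of skewed binary partitions determined by $\bbm(1, 2, \ldots, n)$ obtained by further subdividing each block of $\pi$ in every way compatible with the skewed-partition axioms. Using the explicit description of the normal cone at each vertex provided by Theorem \ref{thm: fulldimcones}, together with the fact that these cones arise as preorder cones, one verifies that the cone associated to $\pi$ decomposes exactly as the union of the cones associated to its refinements. This shows that every maximal cone in the normal fan of $\pf(\bbu)$ is a union of maximal cones in the normal fan of $\pf(1, 2, \ldots, n)$, which is the desired coarsening.

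The main technical obstacle will be checking that this decomposition is a genuine \emph{partition}: the union must be exact, with disjoint cone interiors and no missed region. This should reduce to verifying that every skewed binary partition determined by $\bbm(1, 2, \ldots, n)$ projects to a unique skewed binary partition determined by $\bbm(\bbu)$ under the natural block-merging map that identifies indices collected by $\bbm(\bbu)$, and that the corresponding preorders behave compatibly under this projection. Once the coarsening is established, the equivalence between ``normal fan coarsening'' and ``deformation'' is a standard fact in polytope theory, so the second assertion of the proposition follows immediately from the first.
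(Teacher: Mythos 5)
Your proposal follows the same high-level strategy as the paper's proof: invoke Theorem~\ref{thm: fulldimcones} to identify the maximal normal cones of both polytopes with skewed binary partitions, then exhibit each maximal cone $\ssigma_\calB$ with $\type(\calB)\in\Omega_{\bbm(\bbu)}$ as a union of maximal cones $\ssigma_\calA$ with $\type(\calA)\in\Omega_{(0,1,\dots,1)}$. This is exactly the paper's ``refinement'' decomposition, so the approaches are essentially the same.

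One place where your description is too loose is the phrase ``further subdividing each block of $\pi$.'' When $0\in B_{-1}$ (equivalently $B_0=\emptyset$), the elements of $B_{-1}\setminus\{0\}$ are unconstrained relative to $0$ in $\ssigma_\calB$, whereas in every target partition $\calA$ (with type in $\Omega_{(0,1,\dots,1)}$) one has $A_0=\{0\}$ and $A_{-1}$ lies strictly below $0$. So you cannot merely subdivide $B_{-1}$ in place; you must first choose, for each subset $S\subseteq B_{-1}\setminus\{0\}$, which elements remain below $0$ (forming the new $A_{-1}$) and which jump above $0$. The paper makes this explicit with a two-case split on whether $0\in B_0$ or $0\in B_{-1}$, using in the second case an auxiliary decomposition of $\ssigma_\calB$ indexed by subsets $S$ of $B_{-1}\setminus\{0\}$ before reducing to the first case. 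Also, a small inaccuracy: the multiplicity structure of a general $\bbu$ is not ``strictly'' coarser than that of $(1,2,\dots,n)$ — if $\bbu$ has distinct positive entries, the two coincide. With those details filled in, your plan matches the paper's argument.
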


When $\bbu = (1,2, \dots, n)$ or more generally when the entries of $\bbu$ are strictly increasing, the polytope $\pf(\bbu)$ is a \emph{stellahedron}, i.e., the graph associahedron of a star graph introduced by Carr and Devadoss \cite{carr2005coxetercomplexes}. Proposition~\ref{prop: pf-stellahedron} therefore shows that the normal fan of any parking function polytope is a coarsening of the stellahedral fan. This viewpoint aligns with recent work of Eur, Huh, and Larson \cite{EurHuhLar2023}, which uses the geometry of the stellahedral toric variety to study matroids and relates deformations of $\pf(1,2,\dots,n)$ to polymatroids; see Section~\ref{subsec:polymatroid} for further discussion.

Finally, we derive a formula for the $h$-polynomials of simple parking function polytopes in terms of generalized Eulerian polynomials. We also establish connections between parking function polytopes and several other families of polytopes, from which we deduce additional results, including explicit formulas for volumes and Ehrhart polynomials.

\subsection*{Paper organization} 
We begin with background on preposets and preorder cones, and introduce binary partitions and skewed binary partitions. We then develop the tools needed to describe the normal fan of a parking function polytope and derive the formula for the $h$-polynomials of simple parking function polytopes. The final section explores connections to other families of polytopes and uses these relationships to obtain volume and Ehrhart polynomial formulas.


\section*{Acknowledgement}
Both authors are partially supported by an NSF grant \#2153897-0.

\section{Preliminaries}
In this section, we introduce the necessary notation and preliminary results that will be used in the proofs of our main results.

\subsection{Polyhedra}

A \emph{polyhedron} $P$ in $\R^n$ is the solution set to a finite set of linear inequalities: 
\begin{align}\label{eq:polyhedra-inq}
    P = \left\{\bbx=(x_1, \dots, x_n) \in \R^n\ \Big| \  \bba_i \cdot \bbx \leq b_i \text{ for } i \in I\right\}
\end{align}
for some \(\bba_i \in \R^n\), \(b_i \in \R\), with \(\cdot\) the usual dot product, and \(I\) a finite index set.
The dimension of $P$, denoted by $\dim(P),$ is the dimension of $\mathrm{aff}(P)$, the affine span of $P$. 
A subset $F$ of a polyhedron $P$ is a \emph{face} of $P$ if there exists $\bbh \in \R^n$ such that
\[F = \{\bbx \in P\ |\ \bbh\cdot \bbx \geq \bbh\cdot \bby, \text{ for all } \bby \in P\}.\] 
 A face of dimension $\dim(P) - 1$ is called a \emph{facet}, a face of dimension $1$ is an \emph{edge}, and a face of dimension $0$ is a \emph{vertex}. 
By convention, the empty set is also considered a face of dimension \(-1\). Note that the relative interior of a face cannot lie in the relative interior of any other face.
The partially ordered set $\calF(P)$ of all nonempty faces of $P$, ordered by inclusion, is the \emph{face poset} of $P$. 
A \emph{facet-defining inequality} of a polyhedron $P$ is an inequality \(\bba \cdot \bbx \le b\) such that \(\{\bbx \in P \mid \bba \cdot \bbx = b\}\) is a facet. A \emph{minimal inequality description} is a system of inequalities defining \(P\) with minimal cardinality.

A \emph{polytope} is a bounded polyhedron. By the Minkowski-Weyl Theorem \cite{Minkowski1968, Weyl1934}, a polytope $P \subset \R^n$ can equivalently be defined as the convex hull of finitely many points:
\[P = \conv(\bbx_1, \dots, \bbx_k) := \left\{ \sum_{i=1}^k \lambda_i \bbx_i \ \Big| \ \sum_{i=1}^k \lambda_i = 1, \ \lambda_i \ge 0 \text{ for all } i \right\}.\]

Given a finite set of vectors $R=\{\bbr_1, \bbr_2, \dots, \bbr_k\} \subset \R^n$, the \emph{(polyhedral) cone} $\sigma \subset \R^n$ \emph{generated} by $R$ is
\[ \sigma = \cone(R) := \left\{ \bbx \in \R^n \ \big| \ \bbx = \sum_{i=1}^k c_i \bbr_i, \ c_i \ge 0\right\}.\]
Equivalently, a cone $\sigma$ can be defined by a system of homogeneous linear inequalities, i.e., inequalities of the form $\bba\cdot \bbx \leq 0$, and thus is polyhedral. 
We denote by $\sigma^\circ$ the \emph{relative interior} of $\sigma$. A $k$-dimensional cone is \emph{simplicial} if it can be generated by $k$ linearly independent rays.

Let $F$ be a nonempty face of a polytope $P$. The \emph{normal cone} of $P$ at $F$ is the set
\[\ncone(F,P) := \{\bbw \in \R^n \ | \ \bbw\cdot \bbx \geq \bbw \cdot \bby \text{ for all } \bbx \in F \text{ and all } \bby \in P\}.\]
Equivalently, $\ncone(F,P)$ is the set of vectors $\bbw \in \R^d$ for which $\bbw \cdot \bbx$ achieves its maximum over $P$ at $F$. 
The \emph{normal fan} of $P$, denoted $\Sigma(P)$, is the set of normal cones of $P$ at all of its nonempty faces. 
A polytope $Q$ is a \emph{deformation} of $P$ if $\Sigma(Q)$ is a coarsening of $\Sigma(P)$, i.e., each cone in $\Sigma(Q)$ is a union of cones in $\Sigma(P)$. We denote by $\calF(\Sigma(P))$ the poset on $\Sigma(P)$ ordered by inclusion. 
The following well-known lemma establishes a correspondence between faces of a polytope and their normal cones.

\begin{lem}\label{lem: normalfan-faceposet}
    The map $F \mapsto \ncone(F,P)$ for nonempty faces $F$ is a poset isomorphism from the dual poset of $\calF(P)$ to the poset $\calF(\Sigma(P)).$
\end{lem}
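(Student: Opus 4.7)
The plan is to verify three properties of the map $\Phi: F \mapsto \ncone(F, P)$: it surjects onto $\calF(\Sigma(P))$, it is injective, and it is order-reversing (which is precisely what the dualization in the lemma demands). Surjectivity is immediate from the definition of $\Sigma(P)$ as the collection of normal cones at nonempty faces of $P$. For injectivity and the order property, I would reduce everything to one auxiliary statement, which I would prove first: a vector $\bbw \in \R^n$ lies in $\ncone(F, P)^\circ$ if and only if the linear functional $\bbx \mapsto \bbw \cdot \bbx$ attains its maximum over $P$ exactly on the face $F$.

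Granting this characterization, injectivity is immediate: if $\ncone(F, P) = \ncone(G, P)$, then any vector in the common relative interior has both $F$ and $G$ as its unique maximizing face, forcing $F = G$. The order-reversing direction $F \subseteq G \Rightarrow \ncone(G, P) \subseteq \ncone(F, P)$ is a one-line unfolding of the definition of the normal cone, since any $\bbw$ maximizing over $P$ at every point of $G$ in particular does so at every point of $F \subseteq G$. For the converse, suppose $\ncone(G, P) \subseteq \ncone(F, P)$ and pick $\bbw \in \ncone(G, P)^\circ$; the characterization identifies the maximizing face of $\bbw$ on $P$ as exactly $G$, while $\bbw \in \ncone(F, P)$ forces $F$ to lie inside that maximizing face, yielding $F \subseteq G$.

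The main obstacle is establishing the relative-interior characterization. In one direction, one must exhibit a witness vector $\bbh$ for $F$ that does not witness any strictly larger face---this can be obtained from any defining $\bbh$ by a small perturbation within $\ncone(F,P)$, though care must be taken with the lineality space when $P$ is not full-dimensional so that the perturbation stays in the correct relative interior. In the other direction, one shows that any vector on the relative boundary of $\ncone(F, P)$ lies in $\ncone(G, P)$ for some $G \supsetneq F$, which amounts to identifying the facial decomposition of $\ncone(F,P)$ with the upper interval of $F$ in $\calF(P)$---a standard but slightly technical fact from convex polytope theory. Once this characterization is in place, the three properties above assemble into the claimed poset isomorphism with essentially no additional work.
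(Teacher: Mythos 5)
The paper states this lemma as well-known and provides no proof, so there is no in-text argument to compare against. Your outline is correct, but the relative-interior characterization you build everything on (that $\bbw\in\ncone(F,P)^\circ$ if and only if $F$ is \emph{exactly} the face of $P$ on which $\bbw\cdot\bbx$ is maximized) is heavier machinery than the lemma requires, and you correctly flag that establishing it ``amounts to identifying the facial decomposition of $\ncone(F,P)$ with the upper interval of $F$ in $\calF(P)$''---a fact that is itself essentially equivalent to the lemma being proved. The statement follows directly from the definitions without touching relative interiors at all: $\Sigma(P)$ is by definition the set $\{\ncone(F,P)\mid F \text{ a nonempty face}\}$, so surjectivity is built in; the implication $F\subseteq G\Rightarrow \ncone(G,P)\subseteq\ncone(F,P)$ is immediate from the definition of normal cone; and for the converse, the paper's definition of face already guarantees a vector $\bbh$ with $\{\bbx\in P:\bbh\cdot\bbx\geq\bbh\cdot\bby \ \text{for all } \bby\in P\}=G$ exactly, so $\bbh\in\ncone(G,P)\subseteq\ncone(F,P)$ forces $F\subseteq G$. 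Antisymmetry then gives injectivity for free, so the whole proof is a few lines and the ``main obstacle'' you identify can be sidestepped entirely.
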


The next result is a slight variation of \cite[Lemma 2.4]{CastilloFu2023permutoassociahedron}. It provides a way to verify normal cones at vertices of polytope. We omit the proof as it is very similar to the proof of the original result.

\begin{lem}\label{lem:nfan}
    Suppose that $\calM = \{\sigma_1, \dots, \sigma_k\}$ is a set of cones satisfying $\sigma_1 \cup \dots \cup \sigma_k = \R^n$ and that $\{\bbv_1, \dots, \bbv_k\} \subseteq\R^n$ is a set of points in which for every $i\in \{1, \dots, k\}$
    \[\bbc\cdot \bbv_i > \bbc\cdot \bbv_j \text{ for all } \bbc \in \sigma^\circ_i \text{ and all } j \neq i.\]
    Let $P$ be a polytope defined by $P := \conv(\bbv_1, \dots, \bbv_k)$. Then the set of vertices of $P$ is $\{\bbv_1, \dots, \bbv_k\}$. In addition, we have that $\sigma_i = \ncone(\bbv_i, P)$ for all $i \in \{1,\dots, k\}$. As a consequence, the set of cones in $\calM$ and their faces form the normal fan $\Sigma(P)$ of $P$.
\end{lem}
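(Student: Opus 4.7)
The plan is to verify each assertion of the lemma in turn. First I would show that each $\bbv_i$ is a vertex of $P$: pick any $\bbc \in \sigma_i^\circ$ (non-empty since relative interiors of cones are non-empty), and the hypothesis gives that $\bbv_i$ is the unique maximizer of $\bbx \mapsto \bbc \cdot \bbx$ on $\{\bbv_1, \dots, \bbv_k\}$, hence on $P = \conv(\bbv_1, \dots, \bbv_k)$, so $\{\bbv_i\}$ is a zero-dimensional face of $P$. Conversely, every vertex of a convex hull lies in its generating set, so the vertex set of $P$ is exactly $\{\bbv_1, \dots, \bbv_k\}$.

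For the inclusion $\sigma_i \subseteq \ncone(\bbv_i, P)$, the same argument shows $\sigma_i^\circ \subseteq \ncone(\bbv_i, P)$; since $\sigma_i = \overline{\sigma_i^\circ}$ and $\ncone(\bbv_i, P)$ is closed, the inclusion extends by taking closures. For the reverse direction, write $N_i := \ncone(\bbv_i, P)$. The previous step in fact yields $\sigma_i^\circ \subseteq N_i^\circ$, since a strict maximum over $\{\bbv_j\}$ is a strict maximum over $P$. I would then argue by density. The hypothesis immediately gives $\sigma_i^\circ \cap \sigma_j^\circ = \emptyset$ for $i \neq j$ (any common point would satisfy both $\bbc \cdot \bbv_i > \bbc \cdot \bbv_j$ and $\bbc \cdot \bbv_j > \bbc \cdot \bbv_i$). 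The set $\bigcup_j \sigma_j^\circ$ is dense in $\R^n$, since its complement lies in the finite union of relative boundaries $\bigcup_j (\sigma_j \setminus \sigma_j^\circ)$, which is nowhere dense. Combined with the standard fact $N_i^\circ \cap N_j^\circ = \emptyset$ for $i \neq j$, it follows that every neighborhood of a point in $N_i^\circ$ meets some $\sigma_j^\circ \subseteq N_j^\circ$, and by disjointness of the $N_j^\circ$'s the only possibility is $j = i$. Thus $\sigma_i^\circ$ is dense in $N_i^\circ$, and taking closures gives $N_i \subseteq \sigma_i$.

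The final assertion about the normal fan then follows from Lemma~\ref{lem: normalfan-faceposet}: the maximal cones of $\Sigma(P)$ are exactly the normal cones at vertices, which we have just identified with the $\sigma_i$'s, and the remaining cones of $\Sigma(P)$ are faces of these. The main obstacle in this plan is the density argument in the second paragraph, where one needs to rule out the pathological possibility that a point of $N_i^\circ$ lies on the boundary of every $\sigma_j$ that contains it. I handle this by observing that a finite union of nowhere dense closed sets cannot cover $\R^n$, so the full-dimensional $\sigma_j$'s must together cover a dense open subset; the density argument for $\sigma_i^\circ$ inside $N_i^\circ$ then proceeds on this open set and the conclusion extends by closure.
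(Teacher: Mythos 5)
The paper does not give its own proof of this lemma; it defers to Lemma~2.4 of \cite{CastilloFu2023permutoassociahedron}, so there is no in-paper argument to compare against directly. Taking your proof on its own merits, it is correct: the identification of each $\bbv_i$ as the unique maximizer for any $\bbc\in\sigma_i^\circ$ establishes the vertex set; the observation $\sigma_i^\circ\subseteq N_i^\circ$ (a strict maximum over the generators is a strict maximum over $P$) gives $\sigma_i\subseteq N_i$; and the density of $\bigcup_j\sigma_j^\circ$ combined with pairwise disjointness of the $N_j^\circ$ yields $N_i^\circ\subseteq\overline{\sigma_i^\circ}=\sigma_i$ and hence $N_i\subseteq\sigma_i$.

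Two remarks. First, the Baire/density machinery is heavier than necessary. Once you know $\sigma_j\subseteq N_j$ for every $j$ and that $N_i^\circ\cap N_j=\emptyset$ for all $j\neq i$ (any $\bbc\in N_i^\circ$ makes $\bbv_i$ the unique maximizer over $P$, so $\bbc$ cannot lie in $N_j$, not merely not in $N_j^\circ$), a pointwise argument suffices: if $\bbc\in N_i^\circ\setminus\sigma_i$, then since $\bigcup_j\sigma_j=\R^n$ we have $\bbc\in\sigma_j\subseteq N_j$ for some $j\neq i$, contradicting $N_i^\circ\cap N_j=\emptyset$. This gives $N_i^\circ\subseteq\sigma_i$ directly, and taking closures finishes. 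Second, the ``main obstacle'' you flag in your closing paragraph is already dispatched by the density claim you prove two sentences earlier; you do not need to isolate the full-dimensional $\sigma_j$'s, because $\sigma_j^\circ\subseteq N_j^\circ$ holds whatever the dimension of $\sigma_j$ (it is a consequence of the hypothesis, not of full-dimensionality), and that inclusion is all the density argument uses.
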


The following lemma follows from \cite[Lemma 2.5]{CastilloFu2023permutoassociahedron}.
\begin{lem}\label{lem:det-ineq}
		Suppose that $P$ is a full-dimensional polytope in $\R^n$ with vertex set $\{\bbv_1, \dots, \bbv_k\}.$
		Let $R$ be the set of one dimensional cones in $\Sigma(P)$. Suppose that $\{\rho_1, \rho_2, \dots, \rho_m\}$ is a set of one-dimensional cones that contains $R,$ and let $\bbn_j$ be a nonzero vector in the cone $\rho_j$ (or equivalently a generator for $\rho_j$). Then the polytope $P$ has the following inequality description: 
		\[ P = \left\{\bbx \in 
		\R^n \ :  \  \bbn_j \cdot \bbx  \leq \max_{1 \le i \le k} (\bbn_j \cdot \bbv_i), \quad 1 \le j \le m \right\}, \]
	in which the $j$th inequality defines a facet of $P$ if and only if $\rho_j \in R.$
        %
	\end{lem}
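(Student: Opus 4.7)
The plan is to combine three standard ingredients: every linear functional on a polytope attains its maximum at some vertex; a full-dimensional polytope has a minimal inequality description whose outer normals are positive generators of the rays of its normal fan; and Lemma~\ref{lem: normalfan-faceposet}, which identifies facets of $P$ with the one-dimensional cones in $\Sigma(P)$.

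First I would verify that every listed inequality is valid on $P$. Since $P=\conv(\bbv_1,\dots,\bbv_k)$, the linear functional $\bbx \mapsto \bbn_j\cdot \bbx$ attains its maximum over $P$ at some vertex, so
\[
\max_{\bbx \in P}\bbn_j\cdot \bbx \;=\; \max_{1\le i\le k}\bbn_j\cdot \bbv_i,
\]
which yields the inclusion $P \subseteq \{\bbx : \bbn_j\cdot \bbx \le \max_i \bbn_j\cdot \bbv_i,\; 1\le j\le m\}$. For the reverse inclusion, I would invoke the standard facet characterization: a full-dimensional polytope is the intersection of its facet-defining half-spaces, and the outer normal of each facet spans a ray of $\Sigma(P)$. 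By Lemma~\ref{lem: normalfan-faceposet}, facets of $P$ correspond bijectively to the one-dimensional cones of $\Sigma(P)$, i.e.\ to $R$. Since $R\subseteq\{\rho_1,\dots,\rho_m\}$, the subsystem indexed by $R$ already cuts out $P$, and the remaining inequalities (being valid) are at worst redundant, so the full system still describes $P$.

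For the facet characterization, the set $\{\bbx\in P : \bbn_j\cdot \bbx = \max_i \bbn_j\cdot \bbv_i\}$ is the face of $P$ whose normal cone contains $\bbn_j$ in its relative interior. If $\rho_j\in R$, this normal cone equals $\rho_j$, which is one-dimensional; by the dual poset isomorphism of Lemma~\ref{lem: normalfan-faceposet} the corresponding face has codimension $1$, hence is a facet. If $\rho_j\notin R$, then $\bbn_j$ lies in the relative interior of some cone in $\Sigma(P)$ of dimension at least $2$, giving a face of codimension at least $2$, which is not a facet.

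I do not anticipate a genuine obstacle beyond cleanly citing the normal-fan/facet duality. The only minor delicacy is that $\bbn_j$ is determined by $\rho_j$ only up to positive scaling; but both sides of the inequality scale by the same positive factor, so the inequality — and the face it defines — is independent of this choice. The entire argument is essentially a bookkeeping exercise matching facets of $P$ with their outer normals via the rays of $\Sigma(P)$.
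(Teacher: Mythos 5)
The paper gives no proof of this lemma; it merely remarks that it follows from Lemma~2.5 of \cite{CastilloFu2023permutoassociahedron}, so there is no in-paper argument to compare against. Your blind proof is correct and is the natural direct argument. The three ingredients you use are all sound: (i) each inequality is valid because a linear functional attains its maximum over $P = \conv(\bbv_1,\dots,\bbv_k)$ at a vertex; (ii) the reverse inclusion holds because a full-dimensional polytope equals the intersection of its facet half-spaces and the rays spanned by the facet outer normals are precisely the cones in $R$, all of which appear in your system; and (iii) for the facet classification, $\bbn_j$ lies in the relative interior of a unique cone $\sigma$ of the complete fan $\Sigma(P)$ (it cannot lie in $\{\0\}$ since $\bbn_j \neq \0$), the face cut out by the $j$th inequality is the one with $\ncone(F,P)=\sigma$, and $\sigma$ is one-dimensional precisely when $\sigma=\rho_j\in R$, since a one-dimensional cone containing $\bbn_j$ in its relative interior must equal the ray $\rho_j$. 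The one step worth stating explicitly is the dimension relation $\dim(F)+\dim(\ncone(F,P))=n$ for full-dimensional $P$, which turns ``$\ncone(F,P)$ is one-dimensional'' into ``$F$ is a facet''; Lemma~\ref{lem: normalfan-faceposet} as worded gives only an order isomorphism, and while the dimension correspondence is implicit (both posets are graded of rank $n$), it is cleaner to invoke it directly. With that supplied, your argument is complete.
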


A \emph{lattice point} in $\R^n$ is a point whose coordinates are integers. A polytope is said to be \emph{integral} if every vertex of it is a lattice point. For a polytope $P$ in $\R^n$ and a nonnegative integer $t,$ the $t^{\mathrm{th}}$-dilation $tP$ is the set $\{tx \ |\ x \in P\}.$ We define $i(P,t) := |\Z^n \cap tP|$ to be the number of lattice points in the $t^\mathrm{th}$-dilation $tP.$ Due to Ehrhart \cite{Ehrhart1962}, we have that if $P$ is an integral polytope, then $i(P,t)$ is a polynomial in $t$ of degree equal to the dimension of $P$. Thus, we call $i(P,t)$ the \emph{Ehrhart polynomial} of $P$. It is well-known that the leading coefficient of the Ehrhart polynomial of $P$ is the volume of $P$.

Two integral polytopes $P, Q$ such that $P \subset \R^n$ and $Q \subset \R^m$ are said to be \emph{integrally equivalent} if there exists an invertible affine transformation from $\mathrm{aff}(P)$ to $\mathrm{aff}(Q)$ that preserves the lattice points in the two polytopes. When two integral polytopes are integrally equivalent, they have the same face poset, volume, and Ehrhart polynomials. 

A $d$-dimensional polytope is said to be \emph{simple} if all its vertices are incident to exactly $d$ edges. If $P$ is full-dimensional, i.e., $P$ is a $d$-dimensional polytope in $\R^d$, then one can show that being simple is equivalent to having the normal cone at every vertex being simplicial.  Given a $d$-dimensional polytope $P,$ we let $f_i(P)$ be the number of its $i$-dimensional faces. The \emph{$f$-vector} of $P$ is defined to be the vector $(f_0(P), \dots, f_d(P)),$ and the \emph{$f$-polynomial} of $P$ is given by $f_P(t):= f_0(P) + f_1(P)t + \dots + f_d(P)t^d.$ If a $d$-dimensional polytope $P$ is also simple, we define its \emph{$h$-polynomial} $h_P(t) := h_0(P) + h_1(P)t + \cdots + h_d(P)t^d$ and its \emph{$h$-vector} $(h_0(P), \dots, h_d(P))$ to satisfy the relation $f_P(t) = h_P(t+1)$. This is equivalent to having
\begin{equation}\label{eq: htof}
    f_j(P) = \sum^d_{i \geq j}\binom{i}{j}h_i(P) \ \text{ for all } j = 0, \dots, d.
\end{equation}

It is well-known that the $h$-polynomial $h_P(t)$ of a simple polytope $P$ has nonnegative coefficients \cite[Section 8.2]{Ziegler1995} and is palindromic \cite{Dehn1906, Sommerville1927} as it satisfies the Dehn-Sommerville symmetry. That is, $0 \leq h_i(P) = h_{d-i}(P)$ for all $i = 0, \dots, d$. Thus, we only need to know half of the coefficients of $h_P(t)$ to recover the $f$-vector using equation $\eqref{eq: htof}$.

\subsection{Preposets and preorder cones}
    We introduce the notion of preposets which is, in a sense, a generalization of posets, and then introduce their associated preorder cones. Readers are expected to be familiar with basic notations regarding poset as appear, for example, in \cite[Section 3.1]{Stanley2011EC1}.

A binary operator $\preceq$ on a finite set $A$ is called a \emph{preorder} if it is reflexive and transitive on $A$. A \emph{preposet} is an ordered pair $(A,\preceq)$ of a finite set $A$ and a preorder $\preceq$ on it. We write $i \equiv j$ if $i \preceq j$ and $j \preceq i$. The relation $\equiv$ is an equivalence relation on $A$ and thus partitions $A$ into equivalence classes. We denote by $A/_\equiv$ the set of equivalence classes of $A$ and $\Bar{i}$ the equivalence class of $i$. One sees that we recover the definition of a poset if we require a preposet $(A, \preceq)$ satisfies $i \equiv j$ if and only if $i = j$, i.e., the relation $\equiv$ is antisymmetric. 

Note that the preorder $\preceq$ on $A$ induces a partial order on $A/_\equiv$ by letting $\Bar{i} \preceq \Bar{j}$ if $i \preceq j$ in $A$, and thus defines a poset $(A/_\equiv, \preceq)$ which is closely related to the preposet $(A, \preceq)$. This allows us to define concepts for the preposet $(A, \preceq)$ from concepts for the poset $(A/_\equiv, \preceq)$:
We say that $j$ is a \emph{cover} of $i$ in the preposet $(A, \preceq)$, denoted $i \lessdot j$, if $\Bar{j}$ is a cover of $\Bar{i}$ in the poset $(A/_\equiv, \preceq).$ 
The Hasse diagram of of a preposet $(A, \preceq)$ is the Hasse diagram of the poset $(A/_\equiv, \preceq)$, labeling nodes by equivalence classes without parentheses.

A preoder $\preceq_1$ on $A$ is said to be a \emph{contraction} of another preorder $\preceq_2$ on $A$ if the Hasse diagram of $(A, \preceq_1)$ can be obtained by a sequence of edge contractions of the Hasse diagram and merges of the vertex labels of $(A, \preceq_2)$ .

\begin{ex}
We draw in Figure \ref{fig:3contractions} the Hasse diagrams of three different preposets on $[0,8]$, among which $([0,8], \preceq_1)$ is a poset. The preorder $\preceq_2$ is a contraction of the preorder $\preceq_1$ by contracting the edge $6-8$ and the edge $5-7$. The preorder $\preceq_3$ is a contraction of the preorder $\preceq_2$ by contracting the edge $3-0$. As a result, the preorder $\preceq_3$ is also a contraction of the preorder $\preceq_1$. 
\end{ex}

\begin{figure}
    \centering
    \includegraphics[scale = 1]{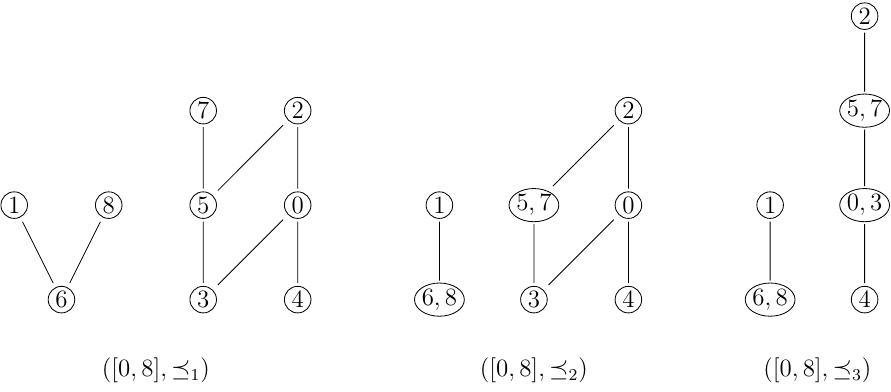}
    \caption{Both $\preceq_2$ and $\preceq_3$ are contractions of $\preceq_1$}
    \label{fig:3contractions}
\end{figure}

A map $f$ from a preposet $(A_1, \preceq_1)$ to another preposet $(A_2, \preceq_2)$ is \emph{order-preserving} if for every $x,y \in A_1$ such that $x \preceq_1 y$ one has $f(x) \preceq_2 f(y).$ If an order-preserving map $f$ is bijective and its inverse is also order-preserving, we say that $f$ is an \emph{isomorphism}.

The \emph{dual} of a preposet $(A, \preceq)$ is the preposet $(A, \preceq^*)$ such that $i \preceq^* j$ if and only if $j \preceq i.$ Clearly, the Hasse diagram of the dual poset $(A, \preceq^*)$ is obtained by turning the Hasse diagram of $(A, \preceq)$ upside down.

In \cite[Section 3]{Postnikov2006}, Postnikov, Reiner, and Williams introduce a natural correspondence between cones in quotient space $\R^n/(1, \dots, 1)\R$ and preorders of the set $[n]$ in the study of faces of generalized permutahedra. Later, Castillo and the first author \cite{CastilloFu2023permutoassociahedron} call these cones \emph{preorder cones}, indicating that they arise from some preposets. They also introduce variations of preorder cones, including ones that are defined in the first orthant of $\R^n$. In this paper, we will start with a preposet on $[0,n]$ and consider preorder cones without quotienting out $(1,\dots,1)\R$. More precisely, given a preposet $([0,n],\preceq)$, we define its associated \emph{preorder cone} to be the cone
\begin{align}\label{eq:usualpreordcone}
    \sigma_\preceq := \{(c_0,c_1, \dots, c_n) \in \R^{n+1}  \ | \ c_i \le c_j \text{ if } i \preceq j, i,j \in [0,n]\}.
\end{align}
We will utilize preorder cones to study the face structure of parking function polytopes. However, it turns out that the slice of $\sigma_\preceq$ at $c_0 = 0$ will mostly play an important role. This leads us to introduce the following definition. 

\begin{defn}\label{defn:preordcone}
    Let $\preceq$ be a preorder on $[0,n]$. The \emph{sliced preorder cone} $\ssigma_\preceq$ associated to $\preceq$ is given by
    \begin{align}\label{eq:preordcone}
    \ssigma_\preceq := \{(c_1, \dots, c_n) \in \R^{n}  \ | \ c_0 = 0 \text{ and } c_i \le c_j \text{ if } i \preceq j, i,j \in [0,n]\}.
    \end{align}
\end{defn}

\begin{ex}\label{ex:preorder-cones}
Let $\preceq$ be the third preorder $\preceq_3$ on $[0,8]$ shown in Figure \ref{fig:3contractions}. 
Then
\[    \ssigma_{\preceq} = \{(c_1, \dots, c_n) \in \R^{n}  \ | \ c_6=c_8 \le c_1 \text{ and } c_4 < 0 = c_3 < c_5=c_7 < c_2\}.
  \]
\end{ex}

A \emph{linear extension} of the preposet $([0,n],\preceq)$ is a bijective order-preserving map from the preposet $([0,n],\preceq)$ to the poset $([0,n], \leq)$ where $\leq$ is the usual order of integers. We denote by $L(\preceq)$ the set of all linear extensions of the preposet $([0,n],\preceq)$.

The next lemma contains a variation of results from \cite[Proposition 3.5]{Postnikov2006} and \cite{CastilloFu2023permutoassociahedron} regarding sliced preorder cones. 

\begin{lem}\label{lem: preordcone}
    Let $\preceq$ and $\preceq'$ be preorders on $[0,n]$. We have that
    \begin{enumerate}[label=(\arabic*)]
        \item\label{itm: conefaces} The sliced preorder cone $\ssigma_{\preceq'}$ is a face of the sliced preorder cone $\ssigma_\preceq$ if and only if $\preceq'$ is a contraction of $\preceq.$
        \item \label{itm:minimaldescription} If a preposet $([0,n],\preceq)$ is a poset, then the associated sliced preorder cone has the following minimal inequality description:
        \[\ssigma_\preceq = \{(c_1, \dots, c_n) \in \R^n  \ | \ c_0 = 0 \text{ and } c_i \le c_j \text{ if } i \lessdot j, i,j \in [0,n]\}\]
        and, hence, the relative interior $\ssigma_\preceq^\circ$ of $\ssigma_\preceq$ is given by
        \[\ssigma_\preceq^\circ = \{(c_1, \dots, c_n) \in \R^n  \ | \ c_0 = 0 \text{ and } c_i < c_j \text{ if } i \lessdot j, i,j \in [0,n]\}.\]
        \item \label{itm: dimension} The dimension of the sliced preorder cone $\ssigma_\preceq$ is the number of equivalence classes in $([0,n], \preceq)$ minus $1$.  
        \item \label{itm: simplicial} The sliced preorder cone $\ssigma_\preceq$ is an $n$-dimensional simplicial cone if and only if $([0,n],\preceq)$ is a poset and its Hasse diagram is a tree (a connected graph with no cycles).
        \item\label{itm:poset}  If a preposet $([0,n],\preceq)$ is a poset, then
        \[\ssigma_\preceq = \bigcup_{\pi \in L[\preceq]} \ssigma(\pi)\] 
        where 
    \begin{equation}\label{eq:sigmapi} 
            \ssigma(\pi) := \{(c_1, \dots, c_n) \in \R^n \ | \ c_0 = 0 \text{ and } c_{\pi(0)} \leq c_{\pi(1)} \leq \cdots \leq c_{\pi(n)}\}.
        \end{equation}
   \end{enumerate}
\end{lem}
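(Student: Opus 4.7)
My plan is to reduce each part to the analogous statement for the (unsliced) preorder cones in the quotient space, which are treated in \cite{Postnikov2006,CastilloFu2023permutoassociahedron}. The key observation is that $\sigma_\preceq$ is invariant under translation by $\R(1,\ldots,1)$, since adding a constant to every coordinate preserves each inequality $c_i \leq c_j$; hence $\sigma_\preceq$ contains the diagonal line, and intersecting with the hyperplane $\{c_0 = 0\}$ selects exactly one representative from each coset. This yields a linear isomorphism $\ssigma_\preceq \cong \sigma_\preceq/\R(1,\ldots,1)$ that preserves face structure and shifts dimension by exactly one, allowing everything we need to be transported from the classical setting.

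For part (1), I would combine this isomorphism with Postnikov's bijection \cite[Proposition~3.5]{Postnikov2006} between faces of the quotient preorder cone and contractions of $\preceq$: the face of $\ssigma_\preceq$ cut out by imposing equalities on a subset of cover inequalities equals $\ssigma_{\preceq'}$, where $\preceq'$ is obtained by merging the corresponding equivalence classes. For part (2), any relation $i \preceq j$ decomposes as a chain $i = i_0 \lessdot \cdots \lessdot i_k = j$, so transitivity from the cover inequalities yields $c_i \leq c_j$, showing that covers suffice. Minimality is obtained by constructing, for each cover $i \lessdot j$, a point of $\ssigma_\preceq$ where only that inequality is saturated: use $c_0 = 0$ as an anchor and pick a strictly order-preserving assignment on the rest of the quotient, then set $c_j = c_i$. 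The open description then follows automatically. For part (3), the preorder cone $\sigma_\preceq$ is full-dimensional inside the subspace $\{c_i = c_j : i \equiv j\} \subset \R^{n+1}$, whose dimension equals the number of equivalence classes, and any strictly order-preserving assignment on the quotient poset provides a relative interior point; slicing by $c_0 = 0$ removes the one-dimensional diagonal, giving the dimension formula.

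For part (4), combining part (3) with $\dim \ssigma_\preceq = n$ forces $n+1$ equivalence classes, so $\preceq$ is a poset. Being an $n$-dimensional simplicial cone is equivalent to having exactly $n$ linearly independent facet normals; by part (2) these normals are $\{\bbe_j - \bbe_i : i \lessdot j\}$ (with the convention $\bbe_0 := \mathbf{0}$). Standard graph-theoretic reasoning identifies a set of $n$ such edge vectors on the vertex set $[0,n]$ as linearly independent precisely when the underlying edges form a spanning tree of $[0,n]$, establishing the claim. For part (5), the containment $\supseteq$ is immediate because each $\pi \in L(\preceq)$ is order-preserving. For $\subseteq$, given $\bbc \in \ssigma_\preceq$, sort $[0,n]$ by the values $c_i$ and break ties using any linear extension of $\preceq$; the resulting bijection is itself a linear extension of $\preceq$ and places $\bbc$ in the corresponding $\ssigma(\pi)$.

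The main obstacle is part (1): establishing the bijection between faces of $\ssigma_\preceq$ and contractions of $\preceq$ requires working in the broader category of preposets rather than posets, and verifying that merging equivalence classes along a set of cover equalities always yields a well-formed preorder whose sliced cone coincides with the corresponding face. Once the quotient identification is in place this reduces to Postnikov's result, but the bookkeeping of iterated contractions---especially tracking when distinct sequences of cover-edge contractions yield the same preposet---is the technically delicate part.
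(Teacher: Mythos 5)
Your reduction of each part to the unsliced preorder cones of Postnikov--Reiner--Williams via the identification $\ssigma_\preceq \cong \sigma_\preceq/\R(1,\ldots,1)$ is exactly the route the paper takes: it gives no independent argument, stating only that the results follow by ``setting $c_0=0$'' in the proofs of \cite[Proposition 3.5]{Postnikov2006}, \cite{CastilloFu2023permutoassociahedron}, and \cite[Lemma 4.1.4]{WarutThesis2025}. Your part-by-part sketches supply the same details those references contain, so the two approaches coincide.
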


The proofs of these results can be obtained by setting $c_0 = 0$ in the proofs of the original results. Hence, the readers who are interested in the proofs may check out \cite[Proposition 3.5]{Postnikov2006}, \cite{CastilloFu2023permutoassociahedron}, and \cite[Lemma 4.1.4]{WarutThesis2025}.

\section{Binary partition and contraction}

In this section, we consider a special family of preorders on $[0,n]$ that can be represented by what we call \emph{binary partitions} of $[0,n]$. We will then characterize the contractions of these preorders in terms of binary partitions. In the next section, we will consider special cases of these partitions that will be useful for describing the normal cones of parking function polytopes.

Recall that an ordered partition of a nonempty set $S$ is a tuple $\calB=(B_1, \dots, B_k)$ of nonempty disjoint subsets of $S$ such that $B_1 \sqcup \cdots \sqcup B_k = S.$ Each subset $B_i$ is called a \emph{block}. To represent a special family of preorders on $[0,n]$, we introduce an analogue of ordered partition called "binary partition" of the set $S = [0,n]$ by separating blocks into two different types: \emph{homogeneous}, and \emph{non-homogeneous}. These block types will be useful for expressing the inequality description of preorder cones. A homogeneous block is marked with a superscript $\star$ for differentiation; for example, $\{1, 3\}^\star$ is a homogeneous block.  We are allowed to apply usual set operations such as union and intersection to homogeneous and non-homogeneous blocks as we normally do to regular sets. 

\begin{defn}\label{defn: partition} Let $k \in \P.$ A \emph{binary partition} of $[0,n]$ into $k$ blocks is an ordered tuple $(B_1, \dots, B_k)$ of nonempty disjoint subsets of $[0,n]$ such that $B_1 \sqcup \cdots \sqcup B_k = [0,n]$ and satisfies the following additional properties.
\begin{enumerate}
    \item Every block is either homogeneous or non-homogeneous.
    \item \label{itm:bp-singleton} Every singleton block is non-homogeneous.
\end{enumerate}
\end{defn}

\begin{rem} For a singleton block, it has the property of both homogenous and non-homogeneous blocks. However, because we do not want to allow both kinds, we make a choice to make it always non-homogenous. Hence, we have Condition (\ref{itm:bp-singleton}) in the above definition. 
\end{rem}

\begin{defn}\label{defn: preord-partition}
For each binary partition $\calB = (B_1, \dots, B_k)$ of $[0,n]$, we associate the preorder $\preceq_\calB$ on the set $[0,n]$ by letting
\begin{align*}
    p &\preceq_\calB  q & &\text{ if } p \in B_i \text{ and } q \in B_j \text{ and } i < j\\
    p &\equiv_\calB q & &\text{ if $p, q \in B_i$ for some homogeneous block $B_i$. }
\end{align*}
If a preposet $([0,n], \preceq)$ satisfies $([0,n], \preceq) = ([0,n], \preceq_\calB)$ for some binary partition $\calB$, then we say that the preorder $\preceq$ is \emph{representable}.

A binary partition $\calC$ is a \emph{contraction} of another binary partition $\calB$, denoted by $\calC \leq \calB$, if $\preceq_\calC$ is a contraction of $\preceq_\calB.$
\end{defn}

\begin{ex}\label{ex:binarypartition} Figure \ref{fig:binarypartitions-preorders} shows preorders $\preceq_\calB, \preceq_\calC$ and $\preceq_\calD$ associated to the binary paritions
$\calB = (\{0,2,3\}, \{1,6,7\}, \{8\}, \{4,5\})$, $\calC = (\{1,2,5\}, \{3,6\}^\star, \{7\}, \{0,4\}^\star, \{8\})$, and $\calD = (\{2,3\}, \{0,7\}^\star, \{6\},\{1,8\}^\star, \{4,5\})$, respectively. 
It is not difficult to see that $\preceq_\calD$ is a contraction of $\preceq_\calB$ and so $\calD \leq \calB$.
\end{ex}

One sees that labeling a block as being homogeneous is simply a way to represent an equivalent class of a preposet. Not every preorder on $[0,n]$ is representable by a binary partition. In fact, a preorder on $[0,n]$ is representable by a binary partition if and only if it induces a graded poset with the properties $\Bar{i} \preceq
 \Bar{j}$ if the rank of $\Bar{j}$ is higher than the rank of $\Bar{i}$, and every equivalent class of size at least two is comparable with all other equivalent classes.

\begin{lem}\label{lem: contraction-representable}
    Every contraction of a representable preorder on $[0,n]$ is representable.
\end{lem}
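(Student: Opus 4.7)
My plan is to prove the lemma by induction on the number of edge contractions in the sequence defining the contraction. The base case is trivial, so it suffices to show that contracting a single edge in the Hasse diagram of a representable preorder $\preceq = \preceq_\calB$ yields another representable preorder; iterating then handles the general case. So let $\calB = (B_1, \dots, B_k)$ and suppose we contract the edge $\bar{i} \lessdot \bar{j}$ of the Hasse diagram of $([0,n], \preceq_\calB)$ to obtain a new preorder $\preceq'$.

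I would first record two structural observations. A cover $\bar{i} \lessdot \bar{j}$ can only occur between equivalence classes in adjacent blocks of $\calB$: if $i \in B_a$ and $j \in B_b$ with $b \geq a+2$, then any $k \in B_{a+1}$ gives $\bar{i} \prec_\calB \bar{k} \prec_\calB \bar{j}$, contradicting the cover relation, so $b = a+1$. Moreover, by the definition of $\preceq_\calB$, the equivalence class $\bar{i}$ equals $B_a$ when $B_a$ is homogeneous and equals $\{i\}$ otherwise; analogously for $\bar{j}$ and $B_{a+1}$.

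Next I would define the candidate binary partition $\calB'$ by replacing the two blocks $B_a, B_{a+1}$ with the ordered list of (up to) three new blocks
\[
B_a \setminus \bar{i}, \quad (\bar{i} \cup \bar{j})^\star, \quad B_{a+1} \setminus \bar{j},
\]
where the outer blocks inherit the non-homogeneous marking of their source blocks and are omitted when empty, while the middle block is declared homogeneous. This is a legitimate binary partition: the middle block has size at least two (so can carry the homogeneous marking), and each outer block can be nonempty only when its source block is non-homogeneous, so Condition~(2) of Definition~\ref{defn: partition} is respected even when an outer block is a singleton.

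To finish, I would verify $\preceq_{\calB'} = \preceq'$. Relations among elements outside $B_a \cup B_{a+1}$ are unchanged on both sides, and the merged class $\bar{i} \cup \bar{j}$ is an equivalence class of $\preceq_{\calB'}$ by the homogeneous marking, matching the effect of contracting the edge in $\preceq'$. For an element of $B_a \setminus \bar{i}$, it sits strictly below $\bar{i} \cup \bar{j}$ in $\preceq_{\calB'}$, which matches $\preceq'$ because it was already strictly below $\bar{j}$ in $\preceq$; an analogous check handles $B_{a+1} \setminus \bar{j}$, and the relative positions among these outer-block elements (and between them and the rest of $\calB$) are inherited unchanged from $\calB$. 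The case analysis based on whether each of $B_a, B_{a+1}$ is homogeneous is uniform under this description, so the main (mild) obstacle is simply the bookkeeping of the empty-block omissions; otherwise the construction is a direct translation of edge contraction into the binary-partition language.
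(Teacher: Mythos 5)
Your proof is correct and takes essentially the same approach as the paper: reduce to a single edge contraction, observe that the contracted classes lie in consecutive blocks, and replace those two blocks by $B_a\setminus\bar{i}$, $(\bar{i}\cup\bar{j})^\star$, $B_{a+1}\setminus\bar{j}$ (dropping empty pieces). The paper spells out the four resulting cases as a displayed equation where you phrase it as a uniform description with empty-block omission, but the content is identical.
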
 

\begin{proof}
     Suppose that a preorder is representable by $\calB = (B_1,\dots, B_k).$ To prove the statement, it suffices to show that contracting one edge of the Hasse diagram of $([0,n],\preceq_\calB)$ gives a preorder $([0,n], \preceq_C)$ for some binary partition $\calC$.

    Recall that the nodes of the Hasse diagram of a preorder are equivalent classes. Consider the preorder $([0,n], \preceq)$ obtained by contracting an edge $\Bar{g}-\Bar{h}$ of the Hasse diagram of $(\preceq_\calB, [0,n])$ where $\Bar{g}$ and $\Bar{h}$ are two equivalent classes of $[0,n]/_{\equiv_\calB}$. As an edge is contracted, we see that $\Bar{g}$ and $\Bar{h}$ come from two consecutive blocks of $\calB$, that is, there is a positive integer $i$ such that $\Bar{g} \subseteq B_i$ and $\Bar{h} \subseteq B_{i+1}$. We note that $B_i\backslash \Bar{g} = \emptyset$ (resp. $B_{i+1}\backslash\Bar{h} = \emptyset$) if and only if $B_i$ is a homogeneous or singleton block. If both $B_i$ and $B_{i+1}$ are neither homogeneous nor singleton blocks, we let $X = B_i\backslash \Bar{g}$ be a block of the same type as $B_i$, and $Y = B_{i+1}\backslash \Bar{h}$ be a block of the same type as $B_{i+1}$. We define a binary partition $\calC$ having $(\Bar{g}\cup \Bar{h})^\star$ as its homogeneous block as follows. 
    \begin{equation}\label{eq: contract-an-edge}
        \calC =
\begin{cases}
  (B_1, \dots, B_{i-1}, (\Bar{g}\cup \Bar{h})^\star, B_{i+2}, \dots, B_p)&\text{ if } B_{i}\backslash \Bar{g} = B_{i+1}\backslash \Bar{h} = \emptyset\\
  (B_1, \dots, B_{i-1}, X,(\Bar{g}\cup \Bar{h})^\star, B_{i+2}, \dots, B_p)&\text{ if } B_{i}\backslash \Bar{g} \neq \emptyset \text{ and } B_{i+1}\backslash \Bar{h} = \emptyset\\
    (B_1, \dots, B_{i-1}, (\Bar{g}\cup \Bar{h})^\star, Y, B_{i+2}, \dots, B_p)&\text{ if } B_{i}\backslash \Bar{g} = \emptyset \text{ and } B_{i+1}\backslash \Bar{h} \neq \emptyset\\
    (B_1, \dots, B_{i-1}, X, (\Bar{g}\cup \Bar{h})^\star, Y, B_{i+2}, \dots, B_p)&\text{ if } B_{i}\backslash \Bar{g} \neq \emptyset \text{ and } B_{i+1}\backslash \Bar{h} \neq \emptyset
\end{cases}
    \end{equation}

It's then easy to check that $([0,n], \preceq) = ([0,n], \preceq_\calC).$
\end{proof}

\begin{figure}
    \centering
\begin{tikzpicture}[scale = 0.8]
        \node (zero) at (-9,1) {0};
        \draw (-9,1) circle (8pt);
        \node (min1) at (-7,1) {2};
        \draw (-7,1) circle (8pt);
        \node (min2) at (-5,1) {3};
        \draw (-5,1) circle (8pt);
        \node (a) at (-9,3) {1};
        \draw (-9,3) circle (8pt);
        \node (b) at (-7,3) {6};
        \draw (-7,3) circle (8pt);
        \node (c) at (-5,3) {7};
        \draw (-5,3) circle (8pt);
        \node (d) at (-7,5) {8};
        \draw (-7,5) circle (8pt);
        \node (max1) at (-8,7) {4};
        \draw (-8,7) circle (8pt);
        \node (max2) at (-6,7) {5};
        \draw (-6,7) circle (8pt);
        \draw (zero)--(a) (zero)--(b) (zero)--(c) (a)--(d)--(max1) (d)--(max2) (min1)--(a) (min1)--(b) (min1)--(c) (min2)--(a) (min2)--(b) (min2)--(c) (b)--(d) (c)--(d);
        \node (poset1) at (-6-1,0) {$([0,8], \preceq_\calB)$};

        \node (min11) at (0-1,1) {1};
        \draw (0-1,1) circle (8pt);
        \node (min21) at (2-1,1) {2};
        \draw (2-1,1) circle (8pt);
        \node (min31) at (4-1,1) {5};
        \draw (4-1,1) circle (8pt);
        \node (s11) at (2-1,3) {3,6};
        \draw (2-1,3) ellipse (15pt and 10pt);
        \node (s31) at (2-1,7) {0,4};
        \draw (2-1,7) ellipse (15pt and 10pt);
        \node (s21) at (2-1,5) {7};
        \draw (2-1,5) circle (8pt);
        \node (s41) at (2-1,9) {8};
        \draw (2-1,9) circle (8pt);
        \draw (min11)--(s11) (min21)--(s11) (min31)--(s11) (s11)--(s21) (s21)--(s31) (s31)--(s41);
        \node (poset1) at (2-1,0) {$([0,8], \preceq_{\calC})$};

        \node (min1) at (7,1) {2};
        \draw (7,1) circle (8pt);
        \node (min2) at (9,1) {3};
        \draw (9,1) circle (8pt);
        \node (b) at (8,3) {$0,7$};
        \draw (8,3) ellipse (15pt and 10pt);
        \node (c) at (8,5) {6};
        \draw (8,5) circle (8pt);
        \node (d) at (8,7) {$1,8$};
        \draw (8,7) ellipse (15pt and 10pt);
        \node (max1) at (7,9) {4};
        \draw (7,9) circle (8pt);
        \node (max2) at (9,9) {5};
        \draw (9,9) circle (8pt);
        \draw (d)--(max1) (d)--(max2) (min1)--(b)  (min2)--(b) (b)--(c) (c)--(d);
        \node (poset1) at (9-1,0) {$([0,8], \preceq_{\calD})$};
        
\end{tikzpicture}
    \caption{The preorders associated to $\calB, \calC,$ and $\calD$ in Example \ref{ex:binarypartition}}
    \label{fig:binarypartitions-preorders}
\end{figure}

The set of all binary partitions of $[0,n]$ becomes a poset when partially ordered by contraction.  We now aim to characterize contraction in terms of graphs defined by binary partitions 

Given two binary partitions $\calB = (B_1,\dots, B_p)$ and $\calC= (C_1, \dots, C_q)$ of $[0,n]$, we associate the bipartite graph $G(\calB,\calC)$ whose two disjoint sets of vertices are $V_1 = \{B_1,\dots B_p\}$ and $V_2=\{C_1, \dots, C_q\}$ (written in this order), and a vertex $B_i \in V_1$ is adjacent to a vertex $C_j \in V_2$ if $B_i \cap C_j \neq \emptyset.$ The vertices in $V_1$ will be called \emph{left vertices} and the vertices in $V_2$ will be called \emph{right vertices}. A vertex of $G(\calB,\calC)$ is said to be \emph{non-homogeneous} (resp. \emph{homogeneous}) if it corresponds to a non-homogeneous (resp. \emph{homogeneous}) block of either $\calB$ or $\calC$. 
For a vertext $v$ of $G(\calB,\calC),$ we define 
\begin{align*}
    \deg^\star(v) &:=\# (\text{homogeneous vertices adjacent to } v),\\
    \deg^\vee(v) &:=\# (\text{non-homogeneous vertices adjacent to } v).
\end{align*}
Clearly, $\deg(v) = \deg^\star(v) + \deg^\vee(v)$.

When the edges of $G(\calB,\calC)$ are not crossing, we say that $G(\calB,\calC)$ is \emph{non-crossing}. See Figure \ref{fig:noncrossing-bipartite} for examples of crossing and non-crossing bipartite graphs. 

\begin{figure}[ht]
    \centering
\begin{tikzpicture}[scale = 0.8]
        \draw (-5,4)--(-3,4.5) (-5,4)--(-3,3.5) (-5,4)--(-3,1.5) (-5,3)--(-3,4.5) (-5,3)--(-3,3.5) (-5,3)--(-3,2.5) (-5,2)--(-3,0.5) (-5,1)--(-3,1.5) (-5,1)--(-3,4.5);
        \node (g1) at (-4,6) {$G(\calB,\calC)$};
        \draw[red, fill = red] (-5,4) circle (2pt) node[left] {$\{0,2,3\}$};
        \draw[red, fill = red] (-5,3) circle (2pt) node[left] {$\{1,6,7\}^\star$};
        \draw[red, fill = red] (-5,2) circle (2pt) node[left]  {$\{8\}$};
        \draw[red, fill = red] (-5,1) circle (2pt) node[left] {$\{4,5\}$};
        \draw[blue, fill = blue] (-3,4.5) circle (2pt) node[right] {$\{1,2,5\}$};
        \draw[blue, fill = blue] (-3,3.5) circle (2pt) node[right] {$\{3, 6\}^\star$};
        \draw[blue, fill = blue] (-3,2.5) circle (2pt) node[right] {$\{7\}$};
        \draw[blue, fill = blue] (-3,1.5) circle (2pt) node[right] {$\{0,4\}^\star$};
        \draw[blue, fill = blue] (-3,0.5) circle (2pt) node[right] {$\{8\}$};

        \node (g2) at (4,6) {$G(\calB,\calD)$};
        \draw (3,4)--(5,4.5) (3,4)--(5,3.5) (3,3)--(5,3.5) (3,3)--(5,2.5) (3,3)--(5,1.5) (3,2)--(5,1.5) (3,1)--(5,0.5);
        \draw[red, fill = red] (3,4) circle (2pt) node[left] {$\{0,2,3\}$};
        \draw[red, fill = red] (3,3) circle (2pt) node[left] {$\{1,6,7\}^\star$};
        \draw[red, fill = red] (3,2) circle (2pt) node[left]  {$\{8\}$};
        \draw[red, fill = red] (3,1) circle (2pt) node[left] {$\{4,5\}$};
        \draw[brown, fill = blue] (5,4.5) circle (2pt) node[right] {$\{2,3\}$};
        \draw[brown, fill = blue] (5,3.5) circle (2pt) node[right] {$\{0,7 \}^\star$};
        \draw[brown, fill = blue] (5,2.5) circle (2pt) node[right] {$\{6\}$};
        \draw[brown, fill = blue] (5,1.5) circle (2pt) node[right] {$\{1,8\}^\star$};
        \draw[brown, fill = blue] (5,0.5) circle (2pt) node[right] {$\{4,5\}$};
        
\end{tikzpicture}
    \caption{$G(\calB,\calC)$ is crossing but $G(\calB,\calD)$ is non-crossing}
    \label{fig:noncrossing-bipartite}
\end{figure}

When $G(\calB,\calC)$ is non-crossing, it is not difficult to verify the following result regarding the intersections of the blocks of $\calB$ and  $\calC.$

\begin{lem}\label{lem: block-intersection}
    Let $\calB = (B_1, \dots, B_p)$ and $\calC = (C_1, \dots, C_q)$ be two binary partitions of $[0,n]$. Suppose that $G(\calB,\calC)$ is non-crossing. Then, for $i \in [p]$ and $j \in [q]$, we have
    \begin{enumerate}
        \item\label{itm: some-blocks-intersection} If $|C_1| + \cdots |C_j| < |B_1| + \cdots + |B_i|$, then there exist positive integers $s$ and $t$ such that $s \leq i$, $t > j$, and both $C_j\cap B_s$ and $C_t\cap B_i$ are non-empty.
        \item\label{itm: blocks-intersection}  $C_j \cap B_i$ is nonempty if and only if 
        \begin{align*}
            |C_1| + |C_2| + \cdots + |C_{j-1}| &< |B_1| + |B_2| + \cdots +
|B_{i}|, \text{ and }\\
|B_1| + |B_2| + \cdots + |B_{i-1}| &< |C_1| + |C_2| + \cdots +
|C_{j}|.
        \end{align*}
        \item\label{itm: blocks-inclusion}  $B_i \subseteq C_j \neq \emptyset$ if and only if
        \begin{align*}
            |C_1| + |C_2| + \cdots + |C_{j-1}| &\le |B_1| + |B_2| + \cdots + |B_{i-1}|, \text{ and}\\
         |B_1| + |B_2| + \cdots + |B_{i}| &\le |C_1| + |C_2| + \cdots + |C_{j}|.
        \end{align*}

\item\label{itm: blocks-revinclusion}  $C_j \subseteq B_i \neq \emptyset$ if and only if
        \begin{align*}
            |C_1| + |C_2| + \cdots + |C_{j}| &\le |B_1| + |B_2| + \cdots + |B_{i}|, \text{ and}\\
         |B_1| + |B_2| + \cdots + |B_{i-1}| &\le |C_1| + |C_2| + \cdots + |C_{j-1}|.
        \end{align*}
    \end{enumerate}
\end{lem}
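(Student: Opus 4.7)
The plan is to exploit the non-crossing hypothesis to construct a single linear ordering of $[0,n]$ that simultaneously respects both partitions $\calB$ and $\calC$, i.e., in which every block of $\calB$ and every block of $\calC$ occupies a consecutive range of positions. Once this is in hand, writing $b_i := |B_1|+\cdots+|B_i|$ and $c_j := |C_1|+\cdots+|C_j|$, block $B_i$ occupies the positions $\{b_{i-1}+1,\ldots,b_i\}$ and block $C_j$ occupies $\{c_{j-1}+1,\ldots,c_j\}$, and every assertion in the lemma reduces to a straightforward statement about overlaps of intervals of consecutive integers.

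The first step is to prove a structural claim: for each $i\in[p]$, the neighborhood $N(B_i):=\{j:B_i\cap C_j\neq\emptyset\}$ is a (nonempty) interval $[j_1(i),j_2(i)]\subseteq [q]$, and these intervals are monotone in the sense that $j_2(i)\le j_1(i+1)$. Both facts follow from contradiction via non-crossing. For the interval property, if $j_1<j_2<j_3$ with $B_i$ adjacent to $C_{j_1}$ and $C_{j_3}$ but not $C_{j_2}$, then $C_{j_2}$ is adjacent to some $B_{i'}$ with $i'\neq i$: if $i'<i$ the edges $B_{i'}C_{j_2}$ and $B_iC_{j_1}$ cross, and if $i'>i$ the edges $B_{i'}C_{j_2}$ and $B_iC_{j_3}$ cross. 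Monotonicity $j_2(i)\le j_1(i+1)$ is just the non-crossing condition applied to the edges $B_iC_{j_2(i)}$ and $B_{i+1}C_{j_1(i+1)}$.

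Using this, I construct the ordering by listing, for $i=1,2,\ldots,p$ in turn, the elements of $B_i$ grouped by their $C$-block in the order $C_{j_1(i)},C_{j_1(i)+1},\ldots,C_{j_2(i)}$ (with arbitrary order inside each $B_i\cap C_j$). The $B$-blocks occupy consecutive positions by construction. The $C$-blocks do as well, because within each $B_i$ the elements are grouped by increasing $C$-index, and the monotonicity $j_2(i)\le j_1(i+1)$ ensures that when we move from $B_i$ to $B_{i+1}$ the $C$-index does not decrease; hence the positions assigned to each $C_j$ are also consecutive.

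With this ordering fixed, Part (2) is immediate: $B_i\cap C_j\neq\emptyset$ iff the intervals $[b_{i-1}+1,b_i]$ and $[c_{j-1}+1,c_j]$ overlap, which is equivalent to $b_{i-1}+1\le c_j$ and $c_{j-1}+1\le b_i$, i.e., the stated strict inequalities. Parts (3) and (4) express containment of one such interval in the other, yielding the two weak inequalities in each direction. For Part (1), the hypothesis $c_j<b_i$ gives that position $b_i$ lies in $B_i$ and in some $C_t$ with $t>j$ (since its position exceeds $c_j$), so $C_t\cap B_i\neq\emptyset$; likewise position $c_j$ lies in $C_j$ and in some $B_s$ with $s\le i$ (since $c_j\le b_i$), giving $C_j\cap B_s\neq\emptyset$. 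The only substantive step is the structural claim in the first paragraph; I do not anticipate any further obstacle, since everything after that is interval arithmetic.
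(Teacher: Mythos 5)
Your proof is correct. The paper itself does not supply an argument for this lemma—it is presented with only the remark that, given the non-crossing hypothesis, the statements ``are not difficult to verify.'' Your construction of a single linear ordering of $[0,n]$ in which every block of $\calB$ and of $\calC$ occupies a consecutive range of positions is a clean way to make that verification precise: once the ordering exists, all four parts reduce to interval overlap and containment for $\{b_{i-1}+1,\dots,b_i\}$ versus $\{c_{j-1}+1,\dots,c_j\}$, exactly as you say. The two structural facts you invoke (that each $N(B_i)$ is a nonempty interval, and that $j_2(i)\le j_1(i+1)$) are correctly deduced from non-crossing; the nonemptiness of $N(B_i)$ additionally uses that $B_i$ is a nonempty set and $\calC$ is a partition, which is worth saying explicitly but is immediate. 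One point worth spelling out a bit more in a final write-up: why the $C$-blocks are consecutive under the ordering. The cleanest statement is that the sequence of $C$-indices read off along the ordering is non-decreasing (within each $B_i$ by construction, and across the $B_i \to B_{i+1}$ boundary by $j_2(i)\le j_1(i+1)$), and the level sets of a non-decreasing integer sequence are intervals. You gesture at this; it is correct but deserves the extra sentence.
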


The next theorem is the main result of this section. It provides a characterization of binary partition contractions in terms of bipartite graphs and their vertex degrees. We will devote the rest of this section to proving it.

\begin{thm}\label{thm: graph-characterization}
    Let $\calB$ and $\calC$ be binary partitions of $[0,n]$. We have that $\calC \leq \calB$ if and only if $G(\calB,\calC)$ satisfies the following conditions.
    \begin{enumerate}[label=(\arabic*)] 
    \item \label{itm: non-crossing} $G(\calB,\calC)$ is non-crossing.
    \item \label{itm: left-nonhomog1} Every left non-homogeneous vertex $v$ satisfies $\deg^\vee(v) \leq 1.$
    \item \label{itm: left-homog1} Every left homogeneous vertex $v$ satisfies $\deg^\star(v) = 1$ and $\deg^\vee(v) = 0$
    \item \label{itm: right-nonhomog1} Every right non-homogeneous vertex $v$ satisfies $\deg^\star(v) = 0$ and $\deg^\vee(v) = 1$.
    \item \label{itm: right-homog1} If a right homogeneous vertex $v$ satisfies $\deg(v) =1,$ then $\deg^\star(v) =1.$
    \end{enumerate}
\end{thm}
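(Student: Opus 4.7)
My plan is to prove the two directions separately, using the following characterization of preorder contraction: $\preceq_\calC$ is a contraction of $\preceq_\calB$ if and only if (A) each $\equiv_\calC$-class is a union of $\equiv_\calB$-classes, (B) each such union induces a connected subgraph of the Hasse diagram of $\preceq_\calB$, and (C) $\preceq_\calC$ coincides with the quotient preorder on $[0,n]$, i.e.\ the transitive closure of $\preceq_\calB \cup \equiv_\calC$. Two immediate consequences I will use are $\preceq_\calB \subseteq \preceq_\calC$ and the fact that any $\equiv_\calC$-singleton is automatically a $\equiv_\calB$-singleton. For $x \in [0,n]$, let $b(x), c(x)$ be the indices with $x \in B_{b(x)}$ and $x \in C_{c(x)}$.

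For the forward direction, I would derive each of (1)--(5) by contradiction. For (1), a crossing pair $x \in B_i \cap C_j$, $y \in B_{i'} \cap C_{j'}$ with $i < i'$ and $j > j'$ gives both $x \preceq_\calC y$ (from $\preceq_\calB$) and $y \preceq_\calC x$ (from the $\calC$-block order), forcing $x \equiv_\calC y$ despite distinct $\calC$-blocks. For (3), a homogeneous $B_i$ is one $\equiv_\calB$-class of size at least $2$, so it must lie inside a $\equiv_\calC$-class of size at least $2$, necessarily a unique homogeneous $C_j$. For (4), each element of a non-homogeneous $C_j$ is a $\equiv_\calC$-singleton, hence a $\equiv_\calB$-singleton inside some non-homogeneous $B_i$; two such elements in distinct $B$-blocks would be $\preceq_\calB$-comparable but incomparable in $\preceq_\calC$ (both at $\calC$-level $j$). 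For (5), if a homogeneous $C_j$ with $\deg(C_j) = 1$ sat inside a non-homogeneous $B_i$, its elements would be $\equiv_\calB$-singletons all in the same $B_i$, disconnected in the Hasse diagram and violating (B). The subtlest case is (2): if $x, y \in B_i$ (non-homogeneous) lie in distinct non-homogeneous blocks $C_j, C_{j'}$ with $j < j'$, then $\{x\}, \{y\}$ are $\equiv_\calC$-singletons; the chain $x = w_0, w_1, \dots, w_m = y$ realizing $x \preceq_\calC y$ via (C) must start and end with $\preceq_\calB$-steps (any $\equiv_\calC$-step from $x$ or to $y$ being trivial), and since $x, y$ share identical $\preceq_\calB$-neighborhoods, replacing $x$ at the start and $y$ at the end yields a chain from $y$ to $x$, forcing $x \equiv_\calC y$ and the contradiction.

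For the backward direction, assuming (1)--(5), I would verify (A), (B), (C). Parts (A) and (B) follow immediately from (3), (4), and Lemma \ref{lem: block-intersection}, which under non-crossing forces each homogeneous $C_j$ to be supported on consecutive $B$-blocks. For (C), the inclusion of the quotient preorder in $\preceq_\calC$ reduces to $\preceq_\calB \subseteq \preceq_\calC$, which follows from non-crossing together with (4). The reverse inclusion requires, for each pair $x \in C_j$, $y \in C_{j'}$ with $j < j'$, an explicit chain from $x$ to $y$ using $\preceq_\calB$ and $\equiv_\calC$-steps; when $b(x) < b(y)$, the single step $x \prec_\calB y$ suffices.

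The main obstacle is the case $b(x) = b(y) = i$ in this last step. Here (3) rules out $B_i$ being homogeneous (else $c(x) = c(y)$), (2) forces at most one of $C_j, C_{j'}$ to be non-homogeneous, and (5) applied to any homogeneous block among them guarantees its degree in $G(\calB,\calC)$ is at least $2$ (since its only potential neighbor $B_i$ is non-homogeneous). Non-crossing then locates an element $z$ of the homogeneous $\calC$-block in some $B_{i'}$ with $i' > i$ (when $C_{j'}$ is homogeneous) or $i' < i$ (when $C_j$ is homogeneous), yielding a chain $x \preceq_\calB z \equiv_\calC y$ or $x \equiv_\calC z \preceq_\calB y$ respectively. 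When both $C_j$ and $C_{j'}$ are homogeneous, one chooses $z_j \in C_j \cap B_{i'}$ with $i' < i$ and $z_{j'} \in C_{j'} \cap B_{i''}$ with $i'' > i$ (both guaranteed by (5) and non-crossing), producing the three-step detour $x \equiv_\calC z_j \preceq_\calB z_{j'} \equiv_\calC y$.
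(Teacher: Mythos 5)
Your proof rests on an asserted characterization of the contraction relation — conditions (A), (B), (C) — that you never establish, and this is where essentially all of the content of the theorem lives. The paper's notion of contraction is iterative: the Hasse diagram of $\preceq_\calB$ is reduced to that of $\preceq_\calC$ by a sequence of edge contractions. Showing that (A)$+$(B)$+$(C) suffices to produce such a sequence requires its own inductive argument: pick a Hasse edge of $\preceq_\calB$ inside some $\equiv_\calC$-class (which exists by (B)), contract it, and then verify that (A), (B), (C) still hold relative to the new preorder. Preserving (B) is the delicate point: contracting one edge can make other covering relations redundant (they become transitivity edges and disappear from the Hasse diagram), which could disconnect a different $\equiv_\calC$-class. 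Ruling this out in the present setting genuinely uses the non-crossing condition (1) — without it (B) can fail after a single step — so the backward direction of your characterization is not a generic fact about preorders and ends up requiring the same constructive chain-building argument the paper carries out directly via Lemmas \ref{lem: covering}--\ref{lem: crossing-bipartite}. The necessity direction of the characterization is also not quite free: for (B) you need that every edge appearing in a later Hasse diagram descends from an edge of the original one, a short but non-trivial verification that you omit.

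Modulo this gap, the rest is correct and pleasant: your derivations of (1)--(5) from (A), (B), (C), and conversely your verification of (A), (B), (C) from (1)--(5), including the three case-by-case chain constructions for (C) when $b(x)=b(y)$, all check out and faithfully parallel the non-crossing analysis in the paper. The overall route is a genuine reformulation — the paper works at the level of covering relations (Lemma \ref{lem: covering}) and shows conditions are preserved under composition of covers (Lemma \ref{lem: bipatie-properties}), whereas you translate to statements about equivalence classes and quotient preorders — but as written the proof has a real missing piece in the unproven sufficiency of (A)$+$(B)$+$(C).
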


To prove Theorem \ref{thm: graph-characterization}, we first need to develop several observations and lemmas. Let us start by describing the covering relations $\calC \lessdot \calB.$ Recall that the nodes of the Hasse diagram of a preorder are equivalent classes. Lemma \ref{lem: contraction-representable} implies that $([0,n], \preceq_{\calC})$ is obtained by contracting an edge $\Bar{g}-\Bar{h}$ of the Hasse diagram of $([0,n], \preceq_\calB)$ where $\Bar{g}$ and $\Bar{h}$ are two equivalent classes of $[0,n]/_{\equiv_\calB}$. As in the proof of Lemma \ref{lem: contraction-representable}, $\calC$ can be written as in equation \eqref{eq: contract-an-edge}. 
We can describe this in terms of $G(\calB,\calC)$ as follows.

\begin{lem}\label{lem: covering}
    We have that $\calB = (B_1, \dots, B_p)$ is a cover of $\calC = (C_1, \dots, C_q)$ if and only if $G(\calB,\calC)$ is a non-crossing bipartite graph with a unique right vertex $C_j$ of degree two satisfying the following properties.
    \begin{enumerate}[label=(\arabic*)]
        \item The vertex $C_j$ is homogeneous.
        \item \label{itm: otherRightVert_cover} Every right vertex that is not $C_j$ has degree one and is adjacent to a vertex of the same type
        \item\label{itm: lnonhomog_cover} Every left non-homogeneous vertex $B_i$ that is adjacent to $C_j$ has degree at most two and satisfies $|B_i\cap C_j| = 1.$
        \item\label{itm: otherLeftVert_cover} Every left vertex that is either homogeneous or is not adjacent to $C_j$ has degree one and is adjacent to a vertex of the same type.
    \end{enumerate}
    \end{lem}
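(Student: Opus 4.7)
The plan is to argue the two implications of the equivalence separately, since each is a routine but slightly delicate combinatorial check.

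For the forward direction ($\Rightarrow$), assume $\calB$ covers $\calC$. By the proof of Lemma~\ref{lem: contraction-representable}, $\calC$ is obtained from $\calB$ by contracting a single Hasse edge $\bar g \lessdot \bar h$, and hence $\calC$ takes one of the four explicit shapes listed in equation~\eqref{eq: contract-an-edge}, depending on whether $B_i \setminus \bar g$ and $B_{i+1}\setminus \bar h$ are empty. I would verify the four graph conditions directly in each of these four cases. In every case the new block $(\bar g \cup \bar h)^\star$ becomes the unique right vertex $C_j$ of degree two, and every other $C_\ell$ equals some $B_m$, giving a degree-one right vertex of the same type as its $B$-neighbor. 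The key bookkeeping remark is that whenever $\bar g$ is strictly contained in a non-homogeneous block $B_i$, the definition of $\equiv_\calB$ forces $\bar g$ to be a singleton, which yields $|B_i \cap C_j| = 1$ as demanded by~\ref{itm: lnonhomog_cover}.

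For the backward direction ($\Leftarrow$), assume $G(\calB,\calC)$ satisfies the four conditions, and let $C_j$ be the unique right vertex of degree two. I would first show that the two left neighbors of $C_j$ are consecutive, say $B_i$ and $B_{i+1}$: if they were $B_s$ and $B_{s'}$ with $s' > s+1$, then for any $s < m < s'$ the block $B_m$ would need an edge to some $C_\ell$ with $\ell \neq j$, but non-crossingness together with the edges $(B_s, C_j)$ and $(B_{s'}, C_j)$ forbids $\ell$ from being either $< j$ or $> j$, contradicting the nonemptiness of $B_m$.

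Next I would set $\bar g := B_i \cap C_j$ and $\bar h := B_{i+1}\cap C_j$ and show that each is an $\equiv_\calB$-equivalence class. If $B_i$ is homogeneous, condition~\ref{itm: otherLeftVert_cover} forces $\deg(B_i) = 1$ with its unique neighbor homogeneous; that neighbor must be $C_j$, so $B_i \subseteq C_j$ and $\bar g = B_i$ is a full homogeneous class. If $B_i$ is non-homogeneous, condition~\ref{itm: lnonhomog_cover} gives $|\bar g| = 1$, and singletons are automatically $\equiv_\calB$-classes. The same dichotomy applies to $\bar h$. Since $C_j$ is homogeneous, the two classes $\bar g$ and $\bar h$ occur at adjacent ranks in the Hasse diagram of $\preceq_\calB$, hence form a covering pair, so the contraction of $\calB$ at $\bar g \lessdot \bar h$ is well defined.

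Finally I would match $\calC$ to one of the four templates of~\eqref{eq: contract-an-edge}. Each $C_\ell$ with $\ell \neq j$ has degree one and the same type as its unique $B$-neighbor, so it equals that $B$-neighbor. Using non-crossingness together with Lemma~\ref{lem: block-intersection} to convert it into sharp partial-sum inequalities, the right vertices $C_\ell$ for $\ell < j$ are exactly $B_1, \dots, B_{i-1}$ followed by $B_i \setminus \bar g$ when that remainder is nonempty, and symmetrically the right vertices $C_\ell$ for $\ell > j$ are $B_{i+1}\setminus \bar h$ (if nonempty) followed by $B_{i+2}, \dots, B_p$. Comparing with~\eqref{eq: contract-an-edge} shows that $\calC$ is precisely the contraction of $\calB$ at $\bar g \lessdot \bar h$, finishing the proof. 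The main obstacle is the last step: pinning down where the remainders $B_i \setminus \bar g$ and $B_{i+1}\setminus \bar h$ appear in $\calC$, for which Lemma~\ref{lem: block-intersection} is the natural tool.
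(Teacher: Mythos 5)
Your proposal is correct and follows essentially the same route as the paper, which simply cites the four-case decomposition of $\calC$ in equation~\eqref{eq: contract-an-edge} and declares both directions a routine verification; you have filled in exactly those details. One small imprecision worth fixing: the sentence ``Each $C_\ell$ with $\ell \neq j$ has degree one and the same type as its unique $B$-neighbor, so it equals that $B$-neighbor'' is not literally true when the neighbor is $B_i$ or $B_{i+1}$ (there $C_\ell = B_i\setminus\bar g$ or $B_{i+1}\setminus\bar h$, a proper subset); the equality $C_\ell = B_m$ requires also that $B_m$ have degree one, which condition~\ref{itm: otherLeftVert_cover} supplies precisely for $m \notin \{i,i+1\}$, and your subsequent sentence already gives the correct detailed picture, so this is only a wording slip, not a gap.
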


\begin{proof}
    Suppose that $\calC \lessdot \calB.$ Then, there are four possible cases of $\calC$ to check as shown in equation \eqref{eq: contract-an-edge}. We note that the unique right homogeneous vertex $C_j$ of degree two in $G(\calB,\calC)$ corresponds to the block $(\Bar{g}\cup \Bar{h})^\star$ of $\calC.$ It is easy to verify using these four cases that $G(\calB,\calC)$ satisfies the non-crossing property and the four conditions. Conversely, suppose that $G(\calB,\calC)$ is non-crossing and satisfies the two conditions. Then, it is also not difficult to check that $\calC$ can only have the form shown in equation $\eqref{eq: contract-an-edge}.$ Thus, $\calC \lessdot \calB.$
\end{proof}

\begin{lem}\label{lem: bipatie-properties}
    If $\calC \leq \calB$, then $G(\calB,\calC)$ satisfies conditions \ref{itm: left-nonhomog1}-\ref{itm: right-homog1} in Theorem \ref{thm: graph-characterization}
\end{lem}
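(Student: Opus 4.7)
The plan is to prove the lemma by induction on the length $k$ of a saturated chain $\calC = \calB^{(0)} \lessdot \calB^{(1)} \lessdot \cdots \lessdot \calB^{(k)} = \calB$. The base case $k = 0$ gives $\calC = \calB$, so $G(\calB,\calB)$ is the perfect matching joining each left $B_i$ to the right $B_i$; every vertex has degree one and is adjacent to a vertex of the same type, so conditions (2)--(5) hold by direct inspection.

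For the inductive step, set $\calB' = \calB^{(1)}$, so that by the inductive hypothesis (IH) $G(\calB, \calB')$ satisfies (2)--(5). Applying Lemma \ref{lem: covering} to the cover $\calC \lessdot \calB'$ produces consecutive blocks $B'_i, B'_{i+1}$ of $\calB'$ and equivalence classes $\Bar{g} \subseteq B'_i$, $\Bar{h} \subseteq B'_{i+1}$ such that $\calC$ is obtained from $\calB'$ by replacing $B'_i, B'_{i+1}$ with the appropriate subset of $\{X,(\Bar{g}\cup\Bar{h})^\star,Y\}$ from equation \eqref{eq: contract-an-edge}, where $X = B'_i \setminus \Bar{g}$ and $Y = B'_{i+1} \setminus \Bar{h}$. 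Every other right vertex and its incident edges remain unchanged in passing from $G(\calB,\calB')$ to $G(\calB,\calC)$.

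Conditions (2), (3), and (4) would then follow by routine case analysis. For (3), a left homogeneous $B_k$ has its unique $\calB'$-neighbor $B'_m$ homogeneous and satisfies $B_k \subseteq B'_m$; when $B'_m \in \{B'_i, B'_{i+1}\}$, being homogeneous forces $B'_m \in \{\Bar{g}, \Bar{h}\}$, giving $B_k \subseteq (\Bar{g}\cup\Bar{h})^\star$ and again a homogeneous unique neighbor. For (2), the only new non-homogeneous right vertices are $X, Y$, which appear only when $B'_i, B'_{i+1}$ are themselves non-homogeneous; a left non-homogeneous $B_k$ cannot acquire both new non-homogeneous neighbors, since that would force $\deg^\vee(B_k) \geq 2$ in $G(\calB,\calB')$, contradicting IH. For (4), if $X$ is non-homogeneous then so is $B'_i$, and IH forces $B'_i \subseteq B_a$ for a unique non-homogeneous left $B_a$, so $X \subseteq B_a$ inherits the required structure; $Y$ is analogous.

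The main obstacle is condition (5) for the newly created homogeneous block $(\Bar{g}\cup\Bar{h})^\star$. The key observation is that under IH this vertex cannot have degree one, so (5) is vacuous for it. Suppose for contradiction the unique left neighbor is $B_a$, so $\Bar{g}\cup\Bar{h} \subseteq B_a$, and analyze by the types of $B'_i, B'_{i+1}$. When both are homogeneous, $B'_i \cup B'_{i+1} \subseteq B_a$ forces each of $B'_i, B'_{i+1}$ to have degree one in $G(\calB,\calB')$, so IH~(5) makes $B_a$ homogeneous, but then IH~(3) gives $\deg(B_a) = 1$, contradicting the two right neighbors $B'_i, B'_{i+1}$. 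When both are non-homogeneous, $\{g,h\} \subseteq B_a$ yields two non-homogeneous right neighbors of $B_a$ and hence $\deg^\vee(B_a) \geq 2$, violating IH~(2) if $B_a$ is non-homogeneous and IH~(3) if $B_a$ is homogeneous. When exactly one of $B'_i, B'_{i+1}$ is homogeneous (say $B'_{i+1}$), IH~(5) applied to $B'_{i+1}$ again makes $B_a$ homogeneous, while the edge from $B_a$ to non-homogeneous $B'_i$ forces $\deg^\vee(B_a) \geq 1$, contradicting IH~(3). All cases fail, so degree one is impossible; the remaining right homogeneous vertices of $G(\calB,\calC)$ coincide with those of $G(\calB,\calB')$ with the same edges, and condition (5) is preserved there directly by IH.
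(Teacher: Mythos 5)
Your proof is correct, but it takes a genuinely different route from the paper's. The paper's proof of this lemma proceeds in two stages: (i) it verifies the cover case ($\calC\lessdot\calB$ implies $G(\calB,\calC)$ satisfies (2)--(5)) by direct inspection from Lemma~\ref{lem: covering}, and (ii) it proves that conditions (2)--(5) define a \emph{transitive} relation on binary partitions --- i.e., for arbitrary $\calB,\calB',\calC$, if $G(\calB,\calB')$ and $G(\calB',\calC)$ both satisfy (2)--(5), then so does $G(\calB,\calC)$ --- and then composes along the chain. You instead induct on chain length, peel off a single cover $\calC\lessdot\calB'$ at the bottom, and exploit the explicit structure of a one-step contraction (equation~\eqref{eq: contract-an-edge}: the replacement of $B'_i,B'_{i+1}$ by the relevant subset of $\{X,(\Bar g\cup\Bar h)^\star,Y\}$) to track exactly how $G(\calB,\calC)$ differs from $G(\calB,\calB')$. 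This is logically weaker than the paper's transitivity step (you only handle the case where $\calB'\to\calC$ is a cover, not an arbitrary contraction), but it is sufficient for the lemma and has the benefit of making the analysis of the delicate condition (5) quite concrete: the only new homogeneous right vertex is $(\Bar g\cup\Bar h)^\star$, and your three-way case split on the types of $B'_i,B'_{i+1}$ cleanly shows it cannot have degree one, after which (5) is vacuous for it. The trade-off is that the paper's route yields, essentially for free, the observation (which the authors record immediately after the lemma) that conditions (2)--(5) define a transitive relation $\curly$ on all binary partitions; your proof does not establish that stronger fact. A small caveat: in your condition~(2) argument you only address the case where $B_k$ gains both $X$ and $Y$ as neighbors, but you also need to rule out $B_k$ being adjacent to one of $X,Y$ and a pre-existing non-homogeneous block $B'_m$ with $m\notin\{i,i+1\}$; that case reduces to $\deg^\vee(B_k)\ge 2$ in $G(\calB,\calB')$ in exactly the same way, so the gap is cosmetic rather than substantive.
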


\begin{proof}
    To prove this statement, we first establish the following two steps.  The first step is to show that the statement holds for every $G(\calB,\calC)$ such that $\calB$ is a cover of $\calC$. This is a straightforward application of Lemma \ref{lem: covering} and is left to the reader to verify. The second step is to show that for all $\calB,\calB',\calC$ such that $G(\calB,\calB')$ and $G(\calB',\calC)$ satisfy conditions \ref{itm: left-nonhomog1}-\ref{itm: right-homog1}, we must have that $G(\calB,\calC)$ also satisfy these conditions. To see this, suppose that $\calB,\calB',\calC$ are binary partitions in which $G(\calB,\calB')$ and $G(\calB',\calC)$ satisfy conditions \ref{itm: left-nonhomog1}-\ref{itm: right-homog1}. Let $B_i$ be a left homogeneous vertex of $G(\calB,\calB')$. By condition \ref{itm: left-homog1}, $B_i$ is adjacent to exactly one homogeneous vertex $B'_j$. Thus, $B_i \subseteq B'_j$. Similarly for $G(\calB',\calC)$, we have $B'_j \subseteq C_k$ for some right homogeneous vertex $C_k$. Hence, $B_i \subseteq C_k.$ Therefore, in $G(\calB,\calC)$, the left homogeneous vertex $B_i$ has degree one and is adjacent to the right homogeneous vertex $C_k.$ This shows that condition \ref{itm: left-homog1} holds for $G(\calB,\calC)$. 

    Now let $C_k$ be a right non-homogeneous vertex of $G(\calB',\calC)$. Since $G(\calB,\calB')$ and $G(\calB',\calC)$ satisfy condition \ref{itm: right-nonhomog1}, one can use a similar argument to show that, in $G(\calB,\calC)$, the left non-homogeneous vertex $C_k$ has degree one and is adjacent to a non-homogeneous vertex. Thus, condition \ref{itm: right-nonhomog1} holds for $G(\calB,\calC)$.

     Assume for the sake of contradiction that $G(\calB,\calC)$ doesn't satisfy condition \ref{itm: left-nonhomog1}. Then, there are a left non-homogeneous vertex $B_i$ and two right non-homogeneous vertices $C_{k_1}$ and $C_{k_2}$ adjacent to $B_i$ in $G(\calB,\calC)$. We deduce from conditions \ref{itm: right-nonhomog1} and \ref{itm: left-nonhomog1} for $G(\calB', \calC)$ that $C_{k_1} \subseteq B'_{j_1}$ and $C_{k_2} \subseteq B'_{j_2}$ for some distinct left non-homogeneous vertices $B'_{j_1}$ and $B'_{j_2}$ of $G(\calB',\calC)$. Since $B_i\cap C_{k_1} \neq \emptyset$ and $B_i\cap C_{k_2} \neq \emptyset$, it follows that $B_i\cap B'_{j_1} \neq \emptyset$ and $B_i\cap B'_{j_2} \neq \emptyset$. Hence, in $G(\calB,\calB')$, the left non-homogeneous $B_i$ is adjacent to the two non-homogeneous vertices $B'_{j_1}$ and $B'_{j_2},$ a contradiction to condition \ref{itm: left-nonhomog1} for $G(\calB,\calB')$.

     Now assume for the sake of contradiction that $G(\calB,\calC)$ doesn't satisfy condition \ref{itm: right-homog1}. Then, there is a right homogeneous vertex $C_k$ of degree one and a left non-homogeneous vertex $B_i$ adjacent to $C_k$. Thus, $C_k \subseteq B_i.$ Consider the following two cases. Cases I: $C_k$ is adjacent to a left homogeneous vertex $B'_j$ in $G(\calB',\calC).$ Applying condition \ref{itm: left-homog1} to $G(\calB',\calC)$, we have $B'_j \subseteq C_k$ and hence $B'_j \subseteq B_i$. Thus, in $G(\calB,\calB')$, the the right non-homogeneous vertex $B'_j$ has degree one and is adjacent to the non-homogeneous vertex $B_i$, a contradiction to condition \ref{itm: right-homog1} for $G(\calB,\calB')$. Case II: $C_k$ is not adjacent to any left homogeneous vertex in $G(\calB',\calC).$ By condition \ref{itm: right-homog1},  we deduce that $C_k$ must be adjacent to two non-homogeneous $B'_{j_1}$ and $B'_{j_2}$ of $G(\calB',\calC).$ Consequently, by condition \ref{itm: right-nonhomog1}, both $B'_{j_1}$ and $B'_{j_2}$ are vertices of $G(\calB,\calB')$ of degree one and are adjacent to $B_i.$ Hence, $B'_{j_1} \subseteq B_i$ and $B'_{j_2} \subseteq B_i$. In particular, this implies that, in $G(\calB,\calB')$, the left non-homogeneous vertex $B_i$ is adjacent to at least two non-homogeneous vertices, a contradiction to \ref{itm: left-nonhomog1} for $G(\calB,\calB')$. Since both cases lead to contradictions, we must have that $G(\calB,\calC)$ satisfies condition \ref{itm: right-homog1}.

     Suppose that $\calC \leq \calB$. Then, $\calC = \calB^t \lessdot \cdots \lessdot \calB^1 \lessdot \calB$ for some $\calB^1, \dots, \calB^t$. The two steps we established above implies that $G(\calB,\calC)$ satisfies conditions \ref{itm: left-nonhomog1}-\ref{itm: right-homog1} in Theorem \ref{thm: graph-characterization}.
\end{proof}

The proof of Lemma \ref{lem: bipatie-properties} shows that conditions \ref{itm: left-nonhomog1}-\ref{itm: right-homog1} in Theorem \ref{thm: graph-characterization} define a transitive relation on the set of all binary partitions on $[0,n].$ That is, we can define a transitive relation $\curly$ on the set of all binary partitions on $[0,n]$ by letting $\calB \curly \calC$ if $G(\calB,\calC)$ satisfies conditions \ref{itm: left-nonhomog1}-\ref{itm: right-homog1} in Theorem \ref{thm: graph-characterization}. 

\begin{lem}\label{lem: crossing-bipartite}
    Suppose that $\calB,$ $\calC = (C_1, \dots, C_q)$, and $\calD$ be binary partitions of $[0,n]$ in which both $G(\calB, \calC)$ and $G(\calC,\calD)$ are non-crossing.  If $G(\calB, \calD)$ is crossing, then there exists an integer $j$ such that $C_j$ is a vertex of degree at least two in $G(\calB,\calC)$ and a vertex of degree at least two in $G(\calC, \calD)$.
\end{lem}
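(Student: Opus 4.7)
The plan is to unpack what "crossing" means in this bipartite setup and then chase an element through all three partitions to locate the required block $C_j$.

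First I would recall that since each of $[0,n]$ lies in exactly one block of each partition, any edge of one of these bipartite graphs is witnessed by some common element. So if $G(\calB,\calD)$ is crossing, there exist indices $i_1<i_2$ on the left and $k_1>k_2$ on the right together with witnesses $x\in B_{i_1}\cap D_{k_1}$ and $y\in B_{i_2}\cap D_{k_2}$.

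Next I would push $x$ and $y$ through $\calC$: let $j_x,j_y$ be the unique indices with $x\in C_{j_x}$ and $y\in C_{j_y}$. The key observation is that, by non-crossing of $G(\calB,\calC)$ and $G(\calC,\calD)$, the index $j_x$ cannot be ``on the wrong side'' of $j_y$. More precisely, I would argue by contradiction in two symmetric cases:
\begin{itemize}
    \item If $j_x<j_y$, then $G(\calC,\calD)$ contains the edges $C_{j_x}D_{k_1}$ and $C_{j_y}D_{k_2}$ with $j_x<j_y$ and $k_1>k_2$, so these edges cross — contradicting the non-crossing hypothesis on $G(\calC,\calD)$.
    \item If $j_x>j_y$, then $G(\calB,\calC)$ contains the edges $B_{i_1}C_{j_x}$ and $B_{i_2}C_{j_y}$ with $i_1<i_2$ and $j_x>j_y$, again a crossing — contradicting the non-crossing hypothesis on $G(\calB,\calC)$.
\end{itemize}

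Both contradictions force $j_x=j_y$. Setting $j:=j_x=j_y$, the element $x$ witnesses edges $B_{i_1}C_j$ and $C_jD_{k_1}$ while $y$ witnesses edges $B_{i_2}C_j$ and $C_jD_{k_2}$; since $i_1\ne i_2$ and $k_1\ne k_2$, the vertex $C_j$ has degree at least two in $G(\calB,\calC)$ and at least two in $G(\calC,\calD)$, as required. There is no real obstacle here — the only thing to be careful about is interpreting the definition of ``crossing'' correctly (the left-vertex order reverses the right-vertex order along the two chosen edges) so that the two case-contradictions are set up with the right orientation.
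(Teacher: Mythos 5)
Your proof is correct and takes essentially the same route as the paper's: both arguments pick a crossing pair of edges in $G(\calB,\calD)$, trace a witnessing element of each edge through a (unique) block of $\calC$, and use the two non-crossing hypotheses to squeeze the two $\calC$-indices to be equal, yielding a block $C_j$ adjacent to two distinct left vertices and two distinct right vertices. The paper phrases the squeeze as deriving $j_1\le j_2$ and $j_1\ge j_2$ directly from the two non-crossing assumptions, whereas you phrase it as two symmetric contradictions; these are the same argument.
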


\begin{proof}
    Suppose that $G(\calB, \calD)$ is crossing. Then there exists an $i$ such that $B_{i}$ is adjacent to $D_{k_1}$ and $B_{i+1}$ is adjacent to $D_{k_2}$ for some $k_1 > k_2.$ Moreover, because $G(\calB,\calC)$ is non-crossing, there must exists $j_1 \leq j_2$ such that such that $B_i$ is adjacent to $C_{j_1}$ in $G(\calB,\calC)$ and $C_{j_1}$ is adjacent to $D_{k_1}$ in $G(\calC, \calD)$, and $B_{i+1}$ is adjacent to $C_{j_2}$ in $G(\calB,\calC)$ and $C_{j_2}$ is adjacent to $D_{k_2}$ in $G(\calC, \calD)$. Since $G(\calC, \calD)$ is non-crossing, we deduce that $j_1 \geq j_2$. Thus, $j_1 = j_2.$ Let $j = j_1 = j_2.$ Then $C_j$ is adjacent to both $B_i$ and $B_{i+1}$ in $G(\calB, \calC)$ and is adjacent to both $D_{k_1}$ and $D_{k_2}$ in $G(\calC,\calD)$. Hence, we found an integer $j$ with the desired property.
\end{proof}

We can now give a proof of the characterization in Theorem \ref{thm: graph-characterization}.

\begin{proof}[Proof of Theorem \ref{thm: graph-characterization}]
    Suppose that $\calC \leq \calB$. Then by Lemma \ref{lem: bipatie-properties}, $G(\calB,\calC)$ must satisfy conditions \ref{itm: left-nonhomog1}-\ref{itm: right-homog1}. Thus, it only remains to be shown that $G(\calB,\calC)$ is non-crossing. Clearly, the non-crossing condition is satisfied when $\calC = \calB$. Thus, we may assume that $\calB \neq \calC$. Let $\calC = \calB^t \lessdot \calB^{t-1} \lessdot \cdots \lessdot \calB^0 = \calB$ be a maximal chain of strictly decreasing binary partitions from $\calB$ to $\calC$, i.e., $\calB^{i-1}$ is a cover of $\calB^{i}$ for all $i \in [t]$. To see that $G(\calB,\calC)$ is non-crossing, we proceed by induction on $t$. When $t = 1$, $\calB^0$ is a cover of $\calB^1$. Thus, by Lemma \ref{lem: covering}, $G(\calB^0,\calB^1)$ is non-crossing. This establishes the base case. Now suppose that $G(\calB^0,\calB^t)$ is non-crossing for a positive integer $t$. Assume for the sake of contradiction $G(\calB^0,\calB^{t+1})$ is crossing. Then by Lemma \ref{lem: crossing-bipartite}, there exists an integer $j$ such that $B^1_j$ is a right vertex of degree at least two of $G(\calB^0,\calB^1)$ and a left vertex of degree at least two of $G(\calB^1, \calB^{t+1})$. Because $\calB^1$ is a cover of $\calB^0,$ it follows from Lemma \ref{lem: covering} that $B^1_j$ is a right homogeneous vertex of $G(\calB^0,\calB^1)$. Condition \ref{lem: bipatie-properties}/\ref{itm: left-homog1} then implies that the vertex $B^t_j$ in $G(\calB^t, \calB^{t+1})$ has degree one, a contradiction. Thus, $G(\calB^{0},\calB^{t+1})$ must be non-crossing. By induction, $G(\calB^0,\calB^t) = G(\calB,\calC)$ is non-crossing.
    
    Conversely, suppose that $G(\calB, \calC)$ meets these conditions. Let us first consider $G(\calB, \calC)$ without any right vertex of $G(\calB,\calC)$ of degree greater than one. We claim that in this case $\calC = \calB$, which will automatically gives $\calC \leq \calB$ as desired. To see this, we show that every left vertex of $G(\calB,\calC)$ has degree one and is adjacent to a vertex of the same type. Assume that there is a left vertex, say $B_i$, of degree at least two. Then by condition \ref{itm: left-homog1}, the left vertex $B_i$ must be non-homogeneous. By condition \ref{itm: left-nonhomog1}, one of the right vertex adjacent to $B_i$, say $C_j$, has to be homogeneous. However, by condition \ref{itm: right-homog1}, $C_j$ must have degree at least two, a contradiction to the assumption $\deg(C_j) = 1$. Hence, every left vertex of $G(\calB,\calC)$ has degree one. From here, one can easily verify using conditions \ref{itm: left-homog1} and \ref{itm: right-nonhomog1} that every edge of $G(\calB, \calC)$ connects two vertices of the same types. This implies $\calC = \calB$ as claimed. 
    
    Now we consider $G(\calB, \calC)$ with at least one right vertex of degree at least two. To see that $\calC \leq \calB$, we will construct a maximal chain of strictly increasing binary partitions $\calC = \calB^t \lessdot \calB^{t-1} \lessdot \cdots \lessdot \calB^0 = \calB$. 
    
    Let $j$ be an integer such that $C_j$ is a right vertex of degree at least two. We note that condition \ref{itm: right-nonhomog1} implies that $C_j$ is a homogeneous vertex. Because $G(\calB,\calC)$ is a non-crossing bipartite graph, $C_j$ is adjacent to two consecutive blocks, say $B_i$ and $B_{i+1}$. This implies that there exist an equivalent class $\Bar{g} \subseteq B_i$ such that $\Bar{g} \subseteq B_i\cap C_j$ and an equivalent class $\Bar{h} \subseteq B_{i+1}$ such that $\Bar{h} \subseteq B_{i+1}\cap C_j$. Let $\calB^1$ be the binary partition corresponding to contracting the edge $\Bar{g}-\Bar{h}$ in the Hasse diagram of the preposet $([0,n], \preceq_\calB)$. Hence, by construction, $\calB^1 \lessdot \calB.$ One sees that $\Bar{g}\cup \Bar{h}$ is a homogeneous block of $\calB^1$ and that $G(\calB^1, \calC)$ has $\Bar{g}\cup\Bar{h}$ as a left vertex of degree one and adjacent $C_j.$  It is then easy to see by considering the four cases described in equation \eqref{eq: contract-an-edge} that $G(\calB^1, \calC)$ meets all of the five conditions. Now repeat the same construction with $G(\calB^1, \calC)$ to produce $\calB^2$ such that $\calB^2 \lessdot \calB^1.$ By repeatedly applying this procedure, we can eventually produce $\calB^t \lessdot \calB^{t-1} \lessdot \cdots \lessdot \calB^0 = \calB$ such that the every right vertex of $G(\calB^t, \calC)$ has degree one. This implies $\calC = \calB^t \leq \calB$ as desired. 
\end{proof}

\section{Skewed binary composition and skewed binary partition}

We now introduce "skewed binary partition" which is a special case of binary partition, and "skewed binary composition". These two combinatorial objects will provide us with sufficient information to describe the normal fans of parking function polytopes. Similarly to how a composition records the sizes of blocks in an ordered partition, a skewed binary composition will be used for storing information of the blocks of a skewed binary partition. We begin by describing the skewed binary composition notation. First, the entries of our composition are nonzero integers (as opposed to being positive integers). Additionally, we allow two different variations of the entries: $i^\circ$ and $i^\star$. We consider these two variations to have the same numerical values as $i,$ and use the absolute value sign to take their numerical values. Hence, $|i^\circ| = |i^\star| = i= |i|.$ 

For convenience, we let $\N^\circ := \{ i^\circ \ | \ i \in \N\}$,
$\P := \N_{>0}$ and $\P_{\ge 2}^\star :=\{ i^\star \ | \ i \in \P, i \ge 2\}.$

\begin{defn}
Let $n \in \P$ and $k \in \N.$ A \emph{skewed binary composition} of $n$ into $k+2$ parts is an ordered tuple $\bbb = (b_{-1}, b_0, b_1, \dots, b_k)$ such that $\sum^k_{i = -1} |b_i| = n$ and
the entries of $\bbb$ satisfy
\begin{align*}
    (b_{-1}, b_0) \in & \ (\N \times \N^\circ) \cup (\P \times \{0\}) \text{ and } b_i \in \P \cup \P_{\ge 2}^\star \text{ for all } 1 \le i \le k.
\end{align*}
\end{defn}

\begin{ex}
    The following are all possible skewed binary compositions of $n = 3.$
    \begin{gather*}
         (0,0^\circ,1,1,1), (0,0^\circ,1,2), (0,0^\circ,1,2^\star), (0,0^\circ,2,1), (0,0^\circ,2^\star,1),(0,0^\circ,3), (0,0^\circ,3^\star), \\
        (0,1^\circ,1, 1), (0,1^\circ,2), (0,1^\circ,2^\star), \\
        (1,0^\circ,1, 1), (1,0^\circ,2), (1,0^\circ,2^\star), \\
        (1,0,1, 1), (1,0,2), (1,0,2^\star), \\
        (0,2^\circ,1),(1, 1^\circ, 1), (2,0^\circ,1),   (2,0,1)\\
        (0,3^\circ), (1, 2^\circ), (2, 1^\circ),  (3,0^\circ), (3,0). 
    \end{gather*}
\end{ex}

Next, we introduce a similar notion to binary partition called \emph{ordered skewed binary partition} of the set $S = [0,n]$ by allowing empty blocks together with additional restrictions.

\begin{defn}\label{defn:bwp}
Let $n \in \P$ and $k \in \N.$ An \emph{(ordered) skewed binary partition} of $[0,n]$ into $k+2$ blocks is an ordered tuple $(B_{-1}, B_0, \dots, B_k)$ of disjoint subsets of $[0,n]$ such that \linebreak $B_{-1}\sqcup B_0 \sqcup B_1 \sqcup \cdots \sqcup B_k = [0,n]$ satisfying the following conditions:
    \begin{enumerate}[label=(S\arabic*)]
    \item\label{itm:firsttwo} $B_0$ is homogeneous, provided $|B_{0}| \geq 2$, and $B_{-1}$ is non-homogeneous.
    \item\label{itm:zerobelong} $0 \in B_{-1}$ or $ 0 \in B_0$. Moreover, if $0 \in B_{-1}$, then $B_{-1}$ contains at least another element and $B_0 = \emptyset$. Hence, if $ 0 \in B_{-1},$ then $|B_{-1}| \geq 2$ and $|B_0| = 0.$ 
    \item \label{itm:singleton} For each $0 \le i \le k$, if $B_i$ is a singleton, then it is non-homogeneous.
    \item $B_i \neq \emptyset$ for all $1 \le i \leq k.$
    \end{enumerate}
\end{defn}

See the first column of Table \ref{tab:ex-BWP} for examples of skewed binary partitions of $[0,8]$. Comparing Definition \ref{defn:bwp} to Definition \ref{defn: partition}, one sees that a skewed binary partition is simply a binary partition with extra requirements (conditions \ref{itm:firsttwo} and \ref{itm:zerobelong}).  In fact, removing empty blocks from a skewed binary partition yields a binary partition. For instance, removing the empty block from the skewed binary partition shown at the top of Table \ref{tab:ex-BWP} gives a binary partition in Example \ref{ex:binarypartition}. 
Thus, properties of binary partitions extend naturally to skewed binary partitions when regarded in this way.

\begin{table}
\centering
\begin{tabular}{r|l}
skewed binary partition $\calB$  & $\type(\calB)$   \\ \hline 
$(\{0,2,3\}, \emptyset, \{1,6,7\},\{8\}, \{4,5\})$ &  $(2, 0, 3, 1, 2)$ \\ \hline
$(\{2,3\}, \{0,7\}^\star, \{6\},\{1,8\}^\star, \{4,5\})$ &  $(2, 1^\circ, 1, 2^\star,2)$ \\ \hline
$(\{1,3,4,5,8\}, \{0\}, \{2\}, \{6,7\})$ &  $(5, 0^\circ, 1,2)$ \\ \hline
$(\emptyset, \{0\}, \{2,3,8\}, \{1,6,7\}^\star,\{4,5\})$ &  $(0, 0^\circ, 3,3^\star,2)$ \\ \hline
$(\{5,7\}, \{0,1,3\}^\star, \{2,4\}^\star, \{6,8\}^\star)$ & $(2, 2^\circ, 2^\star, 2^\star)$\\ \hline
$(\emptyset, \{0,1,2,3,4,5,6,7,8\}^\star)$ &  $(0, 8^\circ)$
\end{tabular}
\caption{Examples of skewed binary partitions and their types}
\label{tab:ex-BWP}
\end{table}

\begin{defn}\label{defn: bwp-preorder}
For a skewed binary partition $\calB$ of $[0,n]$, let $\hat{\calB}$ be the binary partition obtained by removing the empty block from $\calB.$ 
We define the associate preorder $\preceq_\calB$ on the set $[0,n]$ to be the preorder $\preceq_{\hat{\calB}}.$ We also say that a skewed binary partition $\calC$ is a contraction of another skewed binary partition $\calB$ if $\preceq_\calC$ is a contraction of $\preceq_\calB.$

We say a binary partition can be \emph{represented as a skewed binary partition} if it is equal to $\hat{\calB}$ for some skewed binary partition $\calB$.

\end{defn}

\begin{ex}\label{ex:normalcones}
See Figure \ref{fig:contractions} for examples of three skewed binary partitions $\calB, \calC$ and $\calD$ of $[0,8]$ together with their respective associated preorder cones. (Note that $\calB$ and $\calD$ are the first two skewed binary partitions given in Table \ref{tab:ex-BWP}.)

\begin{figure}
    \centering
\begin{tikzpicture}[scale = 0.76]
    \begin{scope}
        \node (zero) at (-2,-2) {0};
        \draw (-2,-2) circle (8pt);
        \node (min1) at (0,-2) {2};
        \draw (0,-2) circle (8pt);
        \node (min2) at (2,-2) {3};
        \draw (2,-2) circle (8pt);
        \node (a) at (-2,0) {1};
        \draw (-2,0) circle (8pt);
        \node (b) at (0,0) {6};
        \draw (0,0) circle (8pt);
        \node (c) at (2,0) {7};
        \draw (2,0) circle (8pt);
        \node (d) at (0,2) {8};
        \draw (0,2) circle (8pt);
        \node (max1) at (-1,4) {4};
        \draw (-1,4) circle (8pt);
        \node (max2) at (1,4) {5};
        \draw (1,4) circle (8pt);
        \draw (zero)--(a) (zero)--(b) (zero)--(c) (a)--(d)--(max1) (d)--(max2) (min1)--(a) (min1)--(b) (min1)--(c) (min2)--(a) (min2)--(b) (min2)--(c) (b)--(d) (c)--(d);
        \node (poset1) at (-4,1) {$([0,8], \preceq_\calB)$};
        \draw [latex-latex](3,1) -- (5,1);
        \node (B1) at (10,1.8) {$\calB = (\{0,2,3\}, \emptyset, \{1,6,7\},\{8\}, \{4,5\})$};
        \node (type) at (10,1) {$\type(\calB) = (2, 0, 3, 1, 2)$};
        \node (coneB1) at (10,0.2) {$\ssigma_\calB = \{0,c_2,c_3 \leq c_1,c_6,c_7 \leq c_8 \leq c_4, c_5 \}$};
        \draw [-latex,double](0,-3) -- (0,-5);
        \node (text1) at (2.3,-4) {contracting $1-8$};

        \node at (10, -1)
        {Note that the notation "$0, c_2, c_3 \le c_1, c_6, c_7$"};
        \node at (10, -1.6) {in the description of $\ssigma_\calB$ above means that};
        \node at (10, -2.2) {each of the three numbers on the left is};
        \node at (10, -2.8) {less or equal to each of the three numbers};
        \node at (10, -3.4) {on the right.};
    \end{scope}

    \begin{scope}[yshift=-10cm,xshift=0cm]
        \node (zero) at (-2,-2) {0};
        \draw (-2,-2) circle (8pt);
        \node (min1) at (0,-2) {2};
        \draw (0,-2) circle (8pt);
        \node (min2) at (2,-2) {3};
        \draw (2,-2) circle (8pt);
        \node (b) at (-1,0) {6};
        \draw (-1,0) circle (8pt);
        \node (c) at (1,0) {7};
        \draw (1,0) circle (8pt);
        \node (d) at (0,2) {$1,8$};
        \draw (0,2) ellipse (15pt and 10pt);
        \node (max1) at (-1,4) {4};
        \draw (-1,4) circle (8pt);
        \node (max2) at (1,4) {5};
        \draw (1,4) circle (8pt);
        \draw (zero)--(b) (zero)--(c) (d)--(max1) (d)--(max2) (min1)--(b) (min1)--(c) (min2)--(b) (min2)--(c) (b)--(d) (c)--(d);
        \node (poset1) at (-4,1) {$([0,8], \preceq_{\calC})$};
        \draw [latex-latex](3,1) -- (5,1);
        \node (B1) at (10,1.8) {$\calC = (\{0,2,3\}, \emptyset, \{6,7\},\{1,8\}^\star, \{4,5\})$};
        \node (type) at (10,1) {$\type(\calC) = (2, 0, 2, 2^\star,2)$};
        \node (coneB1) at (10,0.2) {$\ssigma_{\calC} = \{0,c_2,c_3 \leq c_6,c_7 \leq c_1=c_8 \leq c_4, c_5 \}$};
        \draw [-latex,double](0,-3) -- (0,-5);
        \node (text1) at (2.3,-4) {contracting $0-7$};
    \end{scope}

    \begin{scope}[yshift=-22cm,xshift=0cm]
        \node (min1) at (-1,-2) {2};
        \draw (-1,-2) circle (8pt);
        \node (min2) at (1,-2) {3};
        \draw (1,-2) circle (8pt);
        \node (b) at (0,0) {$0,7$};
        \draw (0,0) ellipse (15pt and 10pt);
        \node (c) at (0,2) {6};
        \draw (0,2) circle (8pt);
        \node (d) at (0,4) {$1,8$};
        \draw (0,4) ellipse (15pt and 10pt);
        \node (max1) at (-1,6) {4};
        \draw (-1,6) circle (8pt);
        \node (max2) at (1,6) {5};
        \draw (1,6) circle (8pt);
        \draw (d)--(max1) (d)--(max2) (min1)--(b)  (min2)--(b) (b)--(c) (c)--(d);
        \node (poset1) at (-4,2) {$([0,8], \preceq_{\calD})$};
         \draw [latex-latex](3,1) -- (5,1);
        \node (B1) at (10,1.8) {$\calD = (\{2,3\}, \{0,7\}^\star, \{6\},\{1,8\}^\star, \{4,5\})$};
        \node (type) at (10,1) {$\type(\calD) = (2, 1^\circ, 1, 2^\star,2)$};
        \node (coneB1) at (10,0.2) {$\ssigma_{\calD} = \{c_2,c_3 \leq 0 = c_7 \leq c_6\leq c_1=c_8 \leq c_4, c_5 \}$};

    \end{scope}
  
\end{tikzpicture}
    \caption{Both $\preceq_{\calC}$ and $\preceq_{\calD}$ are contractions $\preceq_{\calB}$}
    \label{fig:contractions}
\end{figure}
\end{ex}

In the example above, we see that contractions of a skewed binary partition are still skewed binary partitions. This is indeed true in general.

\begin{lem}\label{lem:contraction-skewed}
    Let $\calB$ be a skewed binary partition of $[0,n]$. Then every contraction of $\calB$ can be represented as a skewed binary partition of $[0,n]$. More precisely, if $\preceq$ is a contraction of $\preceq_\calB$, then there exists a skewed binary partition $\calC$ such that $\preceq_\calC$ is $\preceq.$ 
\end{lem}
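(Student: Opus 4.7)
The plan is to proceed by induction on the length of a sequence of single-edge contractions realizing $\preceq$ from $\preceq_\calB$. Any contraction of preorders is a composition of single Hasse-diagram edge contractions, so it suffices to prove the following single-edge version and then iterate: given a skewed binary partition $\calB$ and any preorder $\preceq$ obtained from $\preceq_\calB$ by contracting one cover relation, there exists a skewed binary partition $\calC$ with $\preceq_\calC = \preceq$. Each iteration replaces the current skewed binary partition with a new one, so the induction proceeds without issue.

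For the single-edge step, I would invoke the proof of Lemma \ref{lem: contraction-representable} applied to $\hat\calB$ to produce a binary partition $\calC'$ of one of the four forms in equation \eqref{eq: contract-an-edge} satisfying $\preceq_{\calC'} = \preceq$. The remaining task is to rewrite $\calC'$ as $\hat\calC$ for some skewed $\calC$. The key observation is that the block of $\calC'$ containing $0$ sits in position $1$ or $2$. Indeed, condition (S2) forces $0$ to lie in the first or second block of $\hat\calB$, and a single-edge contraction alters only two consecutive blocks while inserting at most one merged block between them, so the $0$-containing block cannot move past position $2$.

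Given this, I would define $\calC$ in three cases based on $\calC' = (D_1, D_2, D_3, \ldots)$. If $0 \in D_1$ and $D_1$ is non-homogeneous with $|D_1| \ge 2$, set $\calC = (D_1, \emptyset, D_2, D_3, \ldots)$. If $0 \in D_1$ and $D_1$ is homogeneous or equals $\{0\}$, set $\calC = (\emptyset, D_1, D_2, \ldots)$. If $0 \in D_2$, set $\calC = (D_1, D_2, D_3, \ldots)$; in this branch the case analysis guarantees that $D_1$ is non-homogeneous, since it is either $B_{-1}$ itself or a non-empty subset of $B_{-1}$, both non-homogeneous by (S1). In each case $\hat\calC = \calC'$ by construction, and conditions (S1)--(S4) for $\calC$ are then routine to verify.

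The main obstacle is the bookkeeping in the case analysis that establishes the position-$1$-or-$2$ claim and identifies the types of the relevant blocks. Fortunately, the skewed hypotheses on $\calB$ sharply restrict the possibilities: the first block of $\hat\calB$ is always non-homogeneous, and when a second block exists it is either homogeneous of size at least $2$ or the singleton $\{0\}$. This reduces the verification to a short list of subcases matching the four forms of equation \eqref{eq: contract-an-edge} combined with the position of the contracted edge relative to the $0$-containing block.
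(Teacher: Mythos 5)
Your proof is essentially the paper's argument unpacked. The paper observes that a binary partition $\calA = (A_1, A_2, \dots)$ equals $\hat\calC$ for some skewed $\calC$ if and only if either $0 \in A_1$, or else $0 \in A_2$, $A_1$ is non-homogeneous, and $A_2$ is homogeneous whenever $|A_2| \geq 2$; it then asserts this property is preserved under contraction and invokes Lemma \ref{lem: contraction-representable}. Your single-edge induction, the position-$1$-or-$2$ claim, and the three-way construction of $\calC$ constitute a detailed verification of exactly that assertion.

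One sentence in your justification is not actually a valid general principle: ``a single-edge contraction alters only two consecutive blocks while inserting at most one merged block between them, so the $0$-containing block cannot move past position $2$.'' If the contraction is at $i=2$ and the second block $A_2$ contained more than one equivalence class, then $A_2$ would split into $X = A_2\setminus\bar g$ in position $2$ and $(\bar g \cup \bar h)^\star$ in position $3$, so an element of $A_2$ could well land in position $3$ of $\calC'$. What saves the claim is precisely the skewed structure you only cite in your final paragraph: when $0$ lies in the second block of $\hat\calB$, that block is $\{0\}$ or homogeneous by (S1)--(S3), hence a single equivalence class, so $\bar g = A_2$ and $X = \emptyset$. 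You should make that the explicit reason for the position bound rather than the ``two-block window'' heuristic. With that correction, the three-case construction and the (S1)--(S4) checks go through as stated, and the argument is sound.
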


\begin{proof}  
One sees that a binary partition $\calA = (A_1, A_2, \dots, A_p)$ can be represented as a skewed binary partition if and only if one of the followings is true:
\begin{enumerate}
    \item $0 \in A_1$;
    \item $0 \in A_2$,  $A_1$ is non-homogeneous, and $A_2$ is a homogenous block as long as $A_2$ contains at least two elements.
\end{enumerate}
It is easy to verify that the above property is preserved under contraction. This together with Lemma \ref{lem: contraction-representable} implies the conclusion of this lemma.
\end{proof}

Notice that we also include a column of "$\type(\calB)$" on the right of Table \ref{tab:ex-BWP}. We introduce this concept in the definition below.

\begin{defn}\label{defn:type}
Let $\calB= (B_{-1}, B_0, B_1, \dots, B_k)$ be a skewed binary partition of $[0,n]$. We associate a skewed binary composition $\bbb =(b_{-1}, b_0, \dots, b_k)$ of $n$ to it in the following way:
\begin{enumerate}
    \item For $1 \le i \le k$, let $b_i = |B_i|$ if $B_i$ is non-homogeneous, and $b_i = |B_i|^\star$ if $B_i$ is homogenous.
    \item If $0 \in B_0$, then let $b_0 = h^\circ$, where $h = |B_0|-1$, and let $b_{-1} = |B_{-1}|.$
    \item If $0 \in B_{-1}$, then let $b_0 = |B_0| = 0$ and let $b_{-1} = |B_{-1}|-1.$
\end{enumerate}
We say this vector $\bbb$ is the \emph{type} of $\calB$ and denote it by $\type(\calB)$.
\end{defn}

\begin{rem}
Suppose $B= (B_{-1}, B_0, B_1, \dots, B_k)$ has type $\bbb =(b_{-1}, b_0, \dots, b_k)$. 
It is easy to see that for $1 \le i \le k$, the number $b_i$ tells us the cardinality of $B_i$ and whether $B_i$ is homogeneous or not. In particular, Condition \ref{itm:singleton} of Definition \ref{defn:bwp} implies that $b_i \neq 1^\star$, and thus $b_i \in \P \cup \P_{\ge 2}^\star.$

For $i = -1$ or $0$, the number $|b_i|$ is the cardinality of $B_i \setminus \{0\}$.  Furthermore, one checks that
\begin{align}
 \text{   $0 \in B_0$} \quad \text{if and only if}& \quad (b_{-1}, b_0) \in \N \times \N^\circ, \quad \text{and} \label{eq:B0iff} \\
 \text{ $0 \in B_{-1}$} \quad \text{if and only if}& \quad  (b_{-1}, b_0) \in \P \times \{0\}. \label{eq:B-1iff}
\end{align}
Hence, the type of each skewed binary partition of $[0,n]$ is a skewed binary composition of $n$.

By \eqref{eq:B0iff}, we have that $0 \in B_0$ if and only if $b_0 \in \N^\circ$. 
Moreover, when $b_0 = h^\circ$, we know that $B_0$ consists of $h$ positive integers and $0$. This is the reason we use the notation $h^\circ$ in which the superscript $\circ$ indicates that $0$ needs to be included. 
\end{rem}

Applying Lemma \ref{lem: preordcone}/\ref{itm: dimension}, we can compute the dimension of the sliced preorder cone $\ssigma_\calB$ using its type vector $\type(\calB)$ as stated in the next proposition. 
\begin{prop}\label{prop: dim-preordcone}
Suppose that $\calB = (B_{-1}, B_0, B_1, \dots, B_k)$ is a skewed binary partitions of $[0,n]$
with $\type(\calB) = (b_{-1}, b_0, \dots, b_k)$. Then the dimension of the slice preorder cone $\ssigma_\calB$ is 
    \begin{equation}\label{eq:conedim}
        \dim(\ssigma_\calB) = \left( \sum_{b_i \in \P} b_i \right) +  \#(b_i \in \P_{\ge 2}^\star) 
    \end{equation}
    and the co-dimension of $\ssigma_\calB$ (with respect to the space $\R^n$) is
    \[|b_0| + \sum_{b_i \in \P_{\ge 2}^\star}(|b_i|-1).\]
\end{prop}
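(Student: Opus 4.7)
The plan is to reduce the whole statement to a straightforward equivalence-class count via Lemma~\ref{lem: preordcone}/\ref{itm: dimension}, and then translate the count into the type vector using Definition~\ref{defn:type}. Since $\preceq_\calB$ is defined as $\preceq_{\hat \calB}$ by Definition~\ref{defn: bwp-preorder} and ignores empty blocks, we may directly apply Lemma~\ref{lem: preordcone}/\ref{itm: dimension}: $\dim(\ssigma_\calB)$ is one less than the number of equivalence classes of $([0,n],\preceq_\calB)$. No deeper geometric input is needed; everything reduces to careful bookkeeping.

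I would first count the contribution of the two leading blocks $B_{-1}, B_0$ and verify the key reduction that they jointly contribute exactly $b_{-1}+1$ equivalence classes, regardless of which of the two cases \eqref{eq:B0iff}--\eqref{eq:B-1iff} we are in. In Case~1 ($0 \in B_0$, so $b_0 = h^\circ$), we have $|B_{-1}|=b_{-1}$ and $|B_0|=h+1\ge 1$; the non-homogeneous block $B_{-1}$ contributes $b_{-1}$ singleton classes, while $B_0$ contributes one class (it is either homogeneous when $h\ge 1$, or the singleton $\{0\}$ when $h=0$). In Case~2 ($0\in B_{-1}$, so $b_0=0$ and $b_{-1}\in\P$) we have $|B_{-1}|=b_{-1}+1$ and $|B_0|=0$, yielding $b_{-1}+1$ classes and $0$ classes respectively. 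Both cases give a combined count of $b_{-1}+1$.

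Next I would count the contributions of the remaining blocks $B_i$ for $1\le i\le k$: by Condition~\ref{itm:singleton} of Definition~\ref{defn:bwp}, such a block $B_i$ contributes $|b_i|=b_i$ equivalence classes when $b_i\in\P$ (non-homogeneous), and exactly $1$ equivalence class when $b_i\in\P_{\ge 2}^\star$ (homogeneous). Summing everything, the total number of equivalence classes is
\[
(b_{-1}+1)+\sum_{\substack{1\le i\le k\\ b_i\in\P}} b_i + \#\{1\le i\le k : b_i\in\P_{\ge 2}^\star\}.
\]
Subtracting $1$ and noting that $b_{-1}\in\P$ contributes $b_{-1}$ while $b_{-1}=0$ contributes $0$ to $\sum_{b_i\in\P} b_i$, and that $b_0\notin \P\cup\P_{\ge 2}^\star$, we can absorb the $b_{-1}$ term into the sum over all indices $-1\le i\le k$, giving \eqref{eq:conedim}.

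Finally, the codimension formula follows from $\mathrm{codim}(\ssigma_\calB)=n-\dim(\ssigma_\calB)$ together with $n=\sum_{i=-1}^{k}|b_i|$. The contributions $|b_i|$ for $b_i\in\P$ cancel against $b_i$ in the dimension formula (and the $|b_{-1}|=b_{-1}$ term cancels likewise); the $|b_0|$ term survives in full since $b_0\notin\P$; and each $b_i\in\P_{\ge 2}^\star$ contributes $|b_i|-1$. The only real obstacle is keeping track of the Case~1 vs.\ Case~2 split for the first two blocks and making sure the edge cases $b_{-1}=0$ and $h=0$ do not break the bookkeeping; once that reduction is made, the rest is routine arithmetic.
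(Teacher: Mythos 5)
Your proof is correct and follows essentially the same route as the paper's: apply Lemma~\ref{lem: preordcone}/\ref{itm: dimension} to reduce to counting equivalence classes, count block-by-block, and observe that $B_{-1}$ and $B_0$ jointly yield $b_{-1}+1$ classes in both of the cases from Definition~\ref{defn:type}. The paper compresses your two-case split into a single identity $b_{-1} = |B_{-1}| - (1-\chi_{B_0 \neq \emptyset})$ using the equivalence $B_0 = \emptyset \iff 0 \in B_{-1}$ from Condition~\ref{itm:zerobelong}, but the bookkeeping is the same.
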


\begin{proof}
     By Lemma \ref{lem: preordcone}/\ref{itm: dimension}, the dimension of $\ssigma_\calB$ equals the number of equivalence classes of the preposet $([0,n],\preceq_\calB)$ minus one. 
     One observes that for each nonempty homogeneous block $B_i$, it gives arise one equivalence class of the preposet $([0,n],\preceq_\calB)$, and for each non-homogeneous block $B_i$, each singleton subset of $B_i$ is an equivalence class and hence it gives arise $|B_i|$ equivalence classes. Therefore, the total number of equivalences classes of $([0,n],\preceq_\calB)$ arising from $B_1, \dots, B_k$ is
     \[ 
\left(\sum_{b_i \in \P,\ i \geq 1} b_i \right) +  \#(b_i \in \P_{\ge 2}^\star)
\]
     and the total number equivalence classes arising from $B_{-1}$ and $B_0$ is
     \[ |B_{-1}| + \chi_{B_0 \neq  \emptyset},\]
where $\chi_{B_0 \neq \emptyset}$ is $1$ if $B_0$ is not the empty block $\emptyset$, and is $0$ otherwise. However, by Condition \ref{itm:zerobelong} of Definition \ref{defn:bwp}, we have that $B_0 = \emptyset$ if and only if $0 \in B_{-1}$. Then it follows from Definition \ref{defn:type} that $b_{-1} = |B_{-1}| - (1-\chi_{B_0 \neq  \emptyset}).$ One sees that \eqref{eq:conedim} follows from all the discussion above. 

    Finally, the co-dimension formula for $\ssigma_\calB$ follows from \eqref{eq:conedim} and the fact that $n = \sum_{i = -1}^k |b_i|.$
\end{proof}

The following result is an immediate consequence of Lemma \ref{lem: preordcone}/\ref{itm:poset}.
\begin{cor}
    Let $\calB$ be a skewed binary partition of $[0,n]$ and $\ssigma_B$ be the sliced preorder cone associated to $\calB$. If the preorder $\preceq_B$ defines a poset on $[0,n]$, then   
            \[\ssigma_\calB = \bigcup_{\pi \in L[\preceq_\calB]} \ssigma(\pi),\]
        where $L[\preceq_\calB]$ is the set of linear extensions of the poset $([0,n], \preceq_\calB)$ and $\ssigma(\pi)$ is defined as in \eqref{eq:sigmapi}.
\end{cor}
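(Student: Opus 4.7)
The plan is essentially to unwind the definitions and observe that the claim is an immediate specialization of Lemma~\ref{lem: preordcone}\ref{itm:poset} applied to the particular preorder arising from~$\calB$. Concretely, the sliced preorder cone $\ssigma_\calB$ associated to the skewed binary partition $\calB$ is, by Definition~\ref{defn: bwp-preorder}, nothing other than the sliced preorder cone $\ssigma_{\preceq_\calB}$ of the preorder $\preceq_\calB = \preceq_{\hat{\calB}}$ in the sense of Definition~\ref{defn:preordcone}.

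Under the hypothesis that $\preceq_\calB$ defines a poset on $[0,n]$, the preposet $([0,n],\preceq_\calB)$ is a poset, so Lemma~\ref{lem: preordcone}\ref{itm:poset} applies verbatim to $\preceq = \preceq_\calB$ and yields the identity
\[
\ssigma_{\preceq_\calB} \;=\; \bigcup_{\pi \in L[\preceq_\calB]} \ssigma(\pi),
\]
with $\ssigma(\pi)$ as defined in \eqref{eq:sigmapi}. Rewriting the left-hand side as $\ssigma_\calB$ gives exactly the statement of the corollary.

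Because the entire argument consists in matching the two definitions and quoting the lemma, I do not anticipate any substantive obstacle. The only minor point worth flagging for the reader is that $\ssigma_\calB$ is defined via $\hat{\calB}$ rather than $\calB$ itself (so that empty blocks do not enter the description of the cone), but this has no effect on the associated preorder or on the set of linear extensions $L[\preceq_\calB]$; hence the specialization of Lemma~\ref{lem: preordcone}\ref{itm:poset} is truly immediate.
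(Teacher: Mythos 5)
Your proof is correct and matches the paper exactly: the paper states the corollary as an immediate consequence of Lemma~\ref{lem: preordcone}\ref{itm:poset}, which is precisely the specialization you carry out. The remark about $\hat{\calB}$ versus $\calB$ is a reasonable clarification and does not change anything.
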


\subsection*{Standard skewed binary  partition}
It is easy to see that two skewed binary partitions are of the same type if and only if they differ from one another by a permutation of nonzero numbers between blocks. For convenience, we often focus on one particular skewed binary partition of a given type.

\begin{defn}\label{defn: standard-skewed binary} A skewed binary partition $\calB = (B_{-1}, B_0, B_1, \dots, B_p)$ is \emph{standard} if for every nonnegative integer $i \leq p$ every positive integer in $B_i$ is greater than every positive integer in $B_{i-1}.$
\end{defn}

One sees for every skewed binary partition $\calB$, there exists a unique standard one that has the same type as $\calB.$
\begin{ex}\label{ex:standard}
Let 
        \[\calB = (\{0,2,3\}, \emptyset, \{1,6,7\},\{8\}, \{4,5\}) \text{ and } \calC = (\{1,3,4,5,8\}, \{0\}, \{2\}, \{6,7\})\]
        be the skewed binary partitions on the first and third rows of Table \ref{tab:ex-BWP}.
        The unique standard skewed binary partitions of the same types are
        \[ \calB' = (\{0,1,2\}, \emptyset, \{3,4,5\},\{6\}, \{7,8\}) \text{ and } \calC' = (\{1,2, 3,4,5\}, \{0\}, \{6\}, \{7,8\}), \]
        respectively.
\end{ex}

We finish with a useful lemma regarding non-crossingness of the graph associated with standard skewed binary partitions, which follow directly from the definition of standard. 
\begin{lem}\label{lem: standard-crossing}
	Suppose that $\calA = (A_{-1}, A_0, A_1, \dots, A_q)$ and $\calB = (B_{-1}, B_0, B_1, \dots, B_p)$ are standard skewed binary partitions of $[0,n]$.
	Assume that $0 \in A_a$ and $0 \in B_b$ where $a, b \in \{-1, 0\}.$
	Then $G(\hat{\calA}, \hat{\calB})$ is non-crossing if and only if the edge connecting $A_a$ and $B_b$ does not intersect with (or cross) any other edges in  $G(\hat{\calA}, \hat{\calB})$.
\end{lem}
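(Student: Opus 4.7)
The plan is to reduce the lemma to a single observation: under the standardness hypothesis, \emph{every} crossing of edges in $G(\hat{\calA}, \hat{\calB})$ must involve the edge connecting $A_a$ and $B_b$. Once this is established, the lemma is immediate. The forward implication is trivial, and for the converse, if the edge from $A_a$ to $B_b$ crosses no other edge, then no edges cross at all, so $G(\hat{\calA}, \hat{\calB})$ is non-crossing.

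To prove the key observation, I would take an arbitrary pair of crossing edges, say $(A_i, B_j)$ and $(A_{i'}, B_{j'})$ with $i < i'$ and $j > j'$ (the relative order of non-empty blocks is preserved when passing to $\hat{\calA}$ and $\hat{\calB}$, so it is harmless to keep the original indices for the crossing analysis). Choose witnesses $x \in A_i \cap B_j$ and $y \in A_{i'} \cap B_{j'}$. The heart of the proof is a short contradiction: if both $x$ and $y$ were positive integers, then standardness of $\calA$ applied iteratively across the blocks $A_{i}, A_{i+1}, \dots, A_{i'}$ would force $x < y$, while standardness of $\calB$ applied in the reverse direction across $B_{j'}, \dots, B_j$ would force $x > y$. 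Hence at least one of $x, y$ equals $0$. Since $0$ lies in $A_a$ alone on the left side and in $B_b$ alone on the right side, this forces either $(i,j) = (a,b)$ or $(i',j') = (a,b)$, so one of the two crossing edges is precisely the edge joining $A_a$ and $B_b$.

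I do not expect a genuine obstacle here, as the argument is a direct consequence of standardness. The only subtleties to keep track of are (i) that the operation of removing empty blocks to form $\hat{\calA}$ and $\hat{\calB}$ preserves the relative order of the surviving blocks, so the crossing/non-crossing status of any pair of edges is unchanged; and (ii) that the standardness condition governs only positive integers, which is exactly why the block containing $0$ plays the distinguished role identified in the statement. Both points are routine to record but worth making explicit at the start of the formal write-up.
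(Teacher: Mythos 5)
Your proof is correct and is the natural way to fill in the argument; the paper states this lemma without proof, remarking only that it ``follows directly from the definition of standard,'' and your key observation—that any crossing pair of edges would force, via standardness of $\calA$ and of $\calB$ respectively, contradictory inequalities $x<y$ and $y<x$ between witnesses $x\in A_i\cap B_j$ and $y\in A_{i'}\cap B_{j'}$ unless one of them is $0$, which pins that edge to $(A_a,B_b)$—is precisely the content the authors are gesturing at. The two subtleties you flag (order preservation under removal of empty blocks, and that standardness governs only positive integers) are both correct and worth recording.
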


\section{Face Structure}

Recall that we define the parking function polytope $\pf(\bbu)$ as the convex hull of all $\bbu$-parking functions.  Although named a polytope, it is not immediately evident that $\pf(\bbu)$ qualifies as one, since it is defined as the convex hull of infinitely many points. The following proposition justifies its name.

\begin{prop}\label{prop:polytope}
    The parking function polytope $\pf(\bbu)$ is indeed a polytope, that is, it is a convex hull of finitely many points.
\end{prop}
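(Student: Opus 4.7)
The plan is to exhibit $\pf(\bbu)$ as the convex hull of a finite union of ordinary polytopes, which is automatically the convex hull of a finite set, hence a polytope.

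For each permutation $\sigma \in \fS_n$, I would define
\[
P_\sigma \;:=\; \left\{ \bba \in \R^n_{\geq 0} \;:\; a_{\sigma(1)} \le a_{\sigma(2)} \le \cdots \le a_{\sigma(n)} \text{ and } a_{\sigma(i)} \le u_i \text{ for all } i \in [n] \right\}.
\]
Each $P_\sigma$ is cut out by finitely many linear inequalities and is contained in the box $[0,u_n]^n$, so it is a bounded polyhedron, i.e., a polytope. In particular, $P_\sigma$ has finitely many vertices $\vertx(P_\sigma)$.

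Next I would show that a vector $\bba \in \R^n_{\geq 0}$ is a $\bbu$-parking function if and only if $\bba \in P_\sigma$ for some $\sigma \in \fS_n$. For the forward direction, choose any permutation $\sigma$ for which $a_{\sigma(1)} \le \cdots \le a_{\sigma(n)}$ is the nondecreasing rearrangement $b_1 \le \cdots \le b_n$ of the entries of $\bba$; then the parking condition $b_i \le u_i$ translates exactly to $a_{\sigma(i)} \le u_i$, so $\bba \in P_\sigma$. Conversely, if $\bba \in P_\sigma$, then $a_{\sigma(1)} \le \cdots \le a_{\sigma(n)}$ is a nondecreasing rearrangement and the required inequalities hold. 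Thus the set of all $\bbu$-parking functions equals $\bigcup_{\sigma \in \fS_n} P_\sigma$.

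Finally, since the convex hull of a union is the convex hull of the union of vertex sets when each summand is a polytope, I get
\[
\pf(\bbu) \;=\; \conv\!\left(\bigcup_{\sigma \in \fS_n} P_\sigma\right) \;=\; \conv\!\left(\bigcup_{\sigma \in \fS_n} \vertx(P_\sigma)\right),
\]
which is the convex hull of a finite set and therefore a polytope. There is no real obstacle here: the argument is essentially a direct unwinding of the definition, and the only mild care needed is to choose the permutation $\sigma$ consistently when the entries of $\bba$ contain ties (any choice that produces the nondecreasing rearrangement works, since the parking condition depends only on the sorted sequence).
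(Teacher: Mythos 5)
Your proof is correct, and it takes a genuinely different route from the paper's. You decompose the set of all $\bbu$-parking functions as a finite union $\bigcup_{\sigma \in \fS_n} P_\sigma$ of polytopes (each a chamber of the braid arrangement intersected with the "sorted parking" constraints) and then invoke the general fact that the convex hull of a finite union of polytopes equals the convex hull of the finite union of their vertex sets. The paper instead proposes an explicit finite spanning set, the $\bbu$-extreme points $\calX(\bbu)$, and proves $\pf(\bbu) = \conv(\calX(\bbu))$ directly by an inductive "pushing to the boundary" argument (induction on what the paper calls the inner width of a parking function). Your argument is shorter and more conceptual; it immediately delivers the polytope conclusion without identifying a concrete spanning set. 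The paper's argument is longer but buys something extra: it exhibits $\calX(\bbu)$ as a concrete finite superset of the vertices, and this set $\calX(\bbu)$ is reused immediately afterward (Remark~\ref{rem:vertex} and Theorem~\ref{thm: fulldimcones}) to show that $\calX(\bbu)$ is \emph{exactly} the vertex set and to compute the normal cones. In other words, the paper front-loads work that pays off in the normal-fan analysis, whereas your argument establishes polytopality cleanly but would need to be supplemented to get the vertex description used downstream.
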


\begin{defn}\label{defn:extreme}
    A point $\bbv = (v_1, \dots, v_n)\in \R^n$ is \emph{$\bbu$-extreme} if it is a permutation of a point of the form
    \begin{equation}\label{eq:extreme}
    (\underbrace{0,\dots, 0}_{k}, u_{k+1}, \dots, u_n)
    \end{equation} 
for some $0 \leq k \leq n.$ We denote by $\calX(\bbu)$ the set of all $\bbu$-extreme points.
\end{defn}

\begin{ex}\label{ex: Extremepoints}
    Let $\bbu=(0,0,4,4,4,6,8,8)$. Then $\calX(\bbu)$ is the set of all permutations of the $7$ points:
    \begin{center} $(0,0,4,4,4,6,8,8), ~(0, 0, 0, 4,4,6,8,8), ~(0, 0, 0, 0,4,6,8,8), ~(0, 0, 0, 0,0,6,8,8),$
    $(0, 0, 0, 0,0,0,8,8), ~(0, 0, 0, 0,0,0,0,8),$ and $(0, 0, 0, 0,0,0,0,0).$
    \end{center}
    \end{ex}

\begin{proof}[Proof of Proposition \ref{prop:polytope}]
    We claim that $\pf(\bbu) = \conv(\calX(\bbu))$ the convex hull of $\bbu$-extreme points. Since there are only finitely many $\bbu$-\emph{extreme} points, this will show that $\pf(\bbu)$ is indeed a polytope. 

    Since every $\bbu$-extreme point is a $\bbu$-parking function, we have $\conv(\calX(\bbu)) \subseteq \pf(\bbu).$ It remains to be shown that $\pf(\bbu) \subseteq \conv(\calX(\bbu))$. To see this, it suffices to show that every $\bbu$-parking function is a convex combination of $\bbu$-extreme points. 

 Suppose $\bba = (a_1, \dots, a_n)$ is a $\bbu$-parking function and $b_1 \leq \dots \leq b_n$ is the increasing rearrangement of $a_1, \dots, a_n$. Let $I_\bba$ be the set of indices $i$ such that $0 < b_i < u_i$. If $I_\bba$ is empty, we define the \emph{inner width} of $\bba$ to be $0$; otherwise, we let $t = \max(I_\bba)$ and $s$ be the least integer such that $0 < u_s$, and define the inner width of $\bba$ to be $t-s+1$. One sees that the inner width of $\bba$ is always nonnegative, and is $0$ if and only if $\bba$ is $\bbu$-extreme. We will prove that $\bba$ is a convex combination of $\bbu$-extreme points by induction on the inner width of $\bba.$

    The base case when the inner width of $\bba$ is $0$ is true, since $\bba$ itself is $\bbu$-extreme. Now suppose that $\bba$ has inner width $t-s+1 \ge 1$, and that every $\bbu$-parking function of inner width less than $t+s-1$ is a convex combination of $\bbu$-extreme points. 
    Let $\tau \in \fS_n$ be a permutation such that $(a_{\tau(1)}, \dots, a_{\tau(n)}) = (b_{1}, \dots, b_{n})$ and let $\tau(k) = t.$ Then
    \[(a_1, \dots, a_n) = \frac{u_t - a_k}{u_t}(a_1, \dots, a_{k-1}, 0, a_{k+1}, \dots, a_n) + \frac{a_k}{u_t}(a_1, \dots, a_{k-1}, u_t, a_{k+1}, \dots, a_n)\]
    is a convex combination of $\bbu$-parking functions $\bba^* := (a_1, \dots, a_{k-1}, 0, a_{k+1}, \dots, a_n)$ and \\
    $\bba' := (a_1, \dots, a_{k-1}, u_t, a_{k+1}, \dots, a_n)$. Notice that the inner width of both $\bba^*$ and $\bba'$ decrease from the inner width of $\bba$ by at least one. Thus, by our induction hypothesis, we have that both $\bba^*$ and $\bba'$ are convex combinations of $\bbu$-extreme points. Hence, $\bba$ is also a convex combination of $\bbu$-extreme points, completing the proof. 
\end{proof}

It turns out that $\calX(\bbu)$ is also the set of all vertices of $\pf(\bbu).$ One can give a proof of this directly, but we will apply Lemma \ref{lem:nfan} to compute the normal cones of $\pf(\bbu)$ and obtain this as a consequence later in Theorem \ref{thm: fulldimcones}. Hence, the parking function polytope is integral (resp. rational) if and only if $\bbu$ is an integral (resp. rational) vector in $\R^n.$

Since permuting the coordinates of a $\bbu$-parking function $(a_1, \dots, a_n)$ still gives a $\bbu$-parking function, the polytope $\pf(\bbu)$ itself also inherits this symmetric property. The next proposition restates this observation more precisely.

\begin{prop} For every parking function polytope $\pf(\bbu),$ we have that
\begin{enumerate}
    \item If $(a_1, \dots, a_n)$ lies in $\pf(\bbu)$, then so does every permutation of $(a_1, \dots, a_n)$.
    \item If $F$ is a face of $\pf(\bbu),$ then for every permutation $\tau \in \fS_n$ the set $$F_\tau := \{(a_{\tau(1)}, \dots, a_{\tau(n)}) \in \R^n \mid (a_1, \dots, a_n) \in F \}$$ is also a face of $\pf(\bbu)$.
    \item If $\sigma$ is a normal cone of $\pf(\bbu)$ at a face $F,$ then for every permutation $\tau \in \fS_n$ the set $\sigma_\tau := \{(c_{\tau(1)}, \dots, c_{\tau(n)}) \in \R^n \mid (c_1, \dots, c_n) \in \sigma \}$ is the normal cone of $\pf(\bbu)$ at the face $F_\tau$.
\end{enumerate}
    
\end{prop}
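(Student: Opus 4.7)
The plan is to derive all three statements from a single observation: the defining condition of a $\bbu$-parking function---that the non-decreasing rearrangement $b_1 \le \cdots \le b_n$ of the entries satisfies $b_i \le u_i$---depends only on the multiset of entries, not on their order. Hence the set of $\bbu$-parking functions is invariant under the coordinate-permuting action of $\fS_n$ on $\R^n$, and since taking convex hulls commutes with linear maps, $\pf(\bbu)$ itself is invariant under this action. This immediately yields (1).

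For (2), I would introduce the linear automorphism $T_\tau : \R^n \to \R^n$ defined by $T_\tau(a_1, \ldots, a_n) = (a_{\tau(1)}, \ldots, a_{\tau(n)})$, so that $F_\tau = T_\tau(F)$. By (1), $T_\tau$ restricts to a linear bijection $\pf(\bbu) \to \pf(\bbu)$, and any invertible affine self-map of a polytope sends faces to faces, which already suffices. To identify the supporting functional explicitly, I would substitute $\bbx = T_\tau(\bbz)$ in $\bbh \cdot \bbx$ to obtain
\[
\bbh \cdot \bbx \;=\; \sum_{i=1}^n h_i z_{\tau(i)} \;=\; \bbh_\tau \cdot \bbz, \qquad \text{where } \bbh_\tau := (h_{\tau(1)}, \ldots, h_{\tau(n)}).
\]
Thus if $F$ is the face of $\pf(\bbu)$ on which $\bbh$ achieves its maximum, then $F_\tau = T_\tau(F)$ is precisely the face on which $\bbh_\tau$ achieves its maximum, confirming that $F_\tau$ is a face.

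Part (3) follows by essentially the same calculation. A vector $\bbw$ lies in $\ncone(F, \pf(\bbu))$ precisely when $\bbw \cdot \bbx \ge \bbw \cdot \bby$ for all $\bbx \in F$ and $\bby \in \pf(\bbu)$. Writing an arbitrary $\bbz \in F_\tau$ as $\bbz = T_\tau(\bbx)$ with $\bbx \in F$ and an arbitrary $\bby' \in \pf(\bbu)$ as $\bby' = T_\tau(\bby)$ with $\bby \in \pf(\bbu)$ (possible by (1)), the substitution above translates the defining inequality of $\sigma$ into $\bbw_\tau \cdot \bbz \ge \bbw_\tau \cdot \bby'$ for all such $\bbz, \bby'$; that is, $\bbw_\tau \in \ncone(F_\tau, \pf(\bbu))$. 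Since $T_\tau$ is a bijection on $\R^n$, this gives $\sigma_\tau \subseteq \ncone(F_\tau, \pf(\bbu))$, and applying the same argument to $\tau^{-1}$ yields the reverse inclusion.

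There is no genuine obstacle here: once the $\fS_n$-invariance of the set of $\bbu$-parking functions is noted, (2) and (3) reduce to bookkeeping, transporting faces and functionals through the invertible linear map $T_\tau$. The only hazard is distinguishing the action on points from the induced action on supporting functionals; the index substitution above shows that $T_\tau$ acts on a functional by the same formula as on a point, so no spurious $\tau^{-1}$ appears and the identifications $\bbh \leftrightarrow \bbh_\tau$ and $\bbw \leftrightarrow \bbw_\tau$ match the statement of the proposition exactly.
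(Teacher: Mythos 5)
Your overall plan is the natural one, and the paper itself states this proposition without proof, treating it as evident. Part (1) is fine, and the framing of (2) and (3) as transporting faces and supporting functionals through the linear automorphism $T_\tau$ is the right idea.

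The snag is the displayed identity. With $T_\tau(\bbz) = (z_{\tau(1)}, \dots, z_{\tau(n)})$ and $\bbh_\tau = T_\tau(\bbh) = (h_{\tau(1)}, \dots, h_{\tau(n)})$, reindexing gives
\[
\bbh \cdot T_\tau(\bbz) \;=\; \sum_{i} h_i z_{\tau(i)} \;=\; \sum_{j} h_{\tau^{-1}(j)} z_j \;=\; T_{\tau^{-1}}(\bbh) \cdot \bbz,
\]
which is $\bbh_{\tau^{-1}} \cdot \bbz$, not $\bbh_\tau \cdot \bbz$; the two differ whenever $\tau^2 \neq e$ (take $n=3$ and $\tau$ the $3$-cycle $1\mapsto2\mapsto3\mapsto1$: the left side is $h_1 z_2 + h_2 z_3 + h_3 z_1$ while $\bbh_\tau\cdot\bbz = h_2 z_1 + h_3 z_2 + h_1 z_3$). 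Moreover, under the substitution $\bbx = T_\tau(\bbz)$, the condition $\bbx \in F$ becomes $\bbz \in T_{\tau^{-1}}(F) = F_{\tau^{-1}}$, not $\bbz \in F_\tau$. These two off-by-$\tau^{-1}$ slips happen to cancel, which is why your stated conclusion (that $\bbh_\tau$ is maximized exactly on $F_\tau$) is in fact correct, but the derivation as written does not establish it. The clean repair is to invoke orthogonality of the permutation matrix, $T_\tau(\bbh)\cdot T_\tau(\bbz) = \bbh\cdot\bbz$, and substitute $\bbz = T_\tau(\bbx)$ rather than $\bbx = T_\tau(\bbz)$: then $\bbh_\tau\cdot\bbz = \bbh\cdot\bbx$ with $\bbz\in F_\tau \Leftrightarrow \bbx\in F$ gives (2) at once, and applying the same identity to $\bbw$, $\bbx$, $\bby$ shows $\bbw\in\ncone(F,\pf(\bbu)) \Leftrightarrow T_\tau(\bbw)\in\ncone(F_\tau,\pf(\bbu))$, which is (3). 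Your closing heuristic that no spurious $\tau^{-1}$ appears is the right intuition, but it holds because $T_\tau$ is orthogonal, not because of the index identity as you wrote it.
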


\subsection{Face Poset and Normal Fan}

In this part, we will study the face poset and the normal fan of the parking polytope $\pf(\bbu).$ By Lemma \ref{lem: normalfan-faceposet}, for every polytope $P$, the dual poset of $\calF(P)$ is isomorphic to the poset $\calF(\Sigma(P)).$ Therefore, rather than describing the face poset of parking function polytopes, we can alternatively describe their normal fans. It turns out that these fans only depend on the \emph{multiplicity vector} of $\bbu.$

\begin{defn}\label{defn:multiplicity}
Suppose that $\bbu \neq \0$ is nonzero, and there are $\ell \ge 1$ positive integers appearing in $\bbu$: $d_1 < d_2< \cdots < d_\ell.$ We define $m_0(\bbu)$ to be the number of $0$'s in $\bbu$, and $m_i(\bbu)$ be the number of $d_i$'s in $\bbu$ for each $1 \le i \le \ell$. We then define the \emph{multiplicity vector} of $\bbu$ to be $\bbm(\bbu)=(m_0(\bbu), m_1(\bbu), \dots, m_\ell(\bbu))$ and the \emph{data vector} of $\bbu$ to be $\bbd(\bbu)=(d_1, d_2, \dots, d_\ell).$ We call $(\bbm, \bbd)$ the \emph{MD pair} of $\bbu$.
\end{defn}

\begin{ex}
    If $\bbu=(0,0,4,4,4,6,8,8)$, then $\bbm(\bbu)=(2,3,1,2)$ and $\bbd(\bbu)=(4,6,8).$ 
\end{ex}

We say that $\bbd=(d_1, \dots, d_\ell)$ is a \emph{data vector} if it is a data vector of some nonzero $\bbu$, and $\bbm = (m_0, m_1, \dots, m_\ell)$ is a \emph{multiplicity vector} of \emph{magnitude} $n= m_0 +m_1 + \cdots m_\ell$ if it is a multiplicity vector of some nonzero $\bbu$. (Note that the magnitude is the length of $\bbu$.) Clearly, $\bbm=(m_0, m_1, \dots, m_\ell)$ is a multiplicity vector magnitude $n$ if and only if $\bbm$ is a weak composition of $n$ and $\bbm \neq (n)$. Hence, there are $2^n-1$ multiplicity vectors of magnitude $n.$ Finally, we say $(\bbm, \bbd)$ is an \emph{MD pair} if $(\bbm, \bbd)$ is the MD pair of some nonzero $\bbu.$ 

\begin{rem}\label{rem:mzero1}
Because a multiplicity vector $\bbm$ of magnitude $n$ can never be $(n)$, we always have that $m_0 < n.$
\end{rem}

It is clear that starting with an MD pair $(\bbm, \bbd),$ there exists a unique $\bbu$ such that $(\bbm, \bbd)$ is its MD pair.
Hence, we may interchangeably use $(\bbm, \bbd)$ for $\bbu$ and write $\pf(\bbm, \bbd)$ as $\pf(\bbu)$. As we mentioned above, the face poset and the normal fan of the parking polytope $\pf(\bbm,\bbd)$ only depend on the multiplicity vector $\bbm.$ Therefore, we will mostly use the notation $\pf(\bbm,\bbd)$ in this section.

\begin{defn}\label{def: vertex-type}
    Suppose $\bbm = (m_0, m_1, \dots, m_\ell)$ is a multiplicity vector of magnitude $n$. Let $r=n-m_0= m_1 + \cdots m_\ell$. We let $\bba_0, \dots, \bba_r$ be the following $r+1$ skewed binary compositions of $n$:
\begin{enumerate}
    \item We let \begin{align*}
        \bba_0 := \begin{cases}
            (m_0, 0, m_1, \dots, m_\ell) &\text{ if } m_0 > 0\\
            (0, 0^\circ, m_1, \dots, m_\ell) &\text{ if } m_0 = 0
        \end{cases}.
    \end{align*}
    \item Suppose $1 \le i \le r$. Let $g$ be the unique positive integer in which 
    $m_1 + \cdots + m_{g-1} \le i < m_1 + \cdots + m_g.$
    We define $\bba_i := (m_0 + i, 0^\circ, m_1 + \cdots + m_g - i, m_{g+1}, \dots, m_\ell).$ By convention, if $i = r = m_1 + m_2 + \cdots + m_\ell$, we choose $g= \ell+1$ and $\bba_r := (n, 0^\circ).$
\end{enumerate}
We denote by $\Omega_{\bbm}$ the set of these $r+1$ skewed binary compositions of $n$.
\end{defn} 

\begin{rem}
    \label{rem:mzero2}
    For any $(a_{-1}, a_0, \dots, a_q) \in \Omega_\bbm,$ we have $|a_{-1}| \ge m_0$ and $|a_0| = 0$. 
\end{rem}

\begin{ex}\label{ex: Omega_m}
Let $\bbm=(2, 3, 1, 2)$ and $\bbm'= (0, 3, 5)$ be two multiplicity vectors of magnitude $8$. Then
\begin{align*}
    \Omega_\bbm &= \{(2, 0, 3, 1, 2), (3, 0^\circ,2,1,2), (4, 0^\circ ,1,1,2), (5,0^\circ,1,2), (6,0^\circ,2), (7,0^\circ,1), (8,0^\circ)\}\\
    \Omega_{\bbm'} &= \{(0, 0^\circ, 3,5), (1, 0^\circ, 2,5), (2, 0^\circ, 1, 5), (3, 0^\circ, 5), (4, 0^\circ, 4), (5, 0^\circ, 3), (6,0^\circ, 2),\\
    & \qquad (7, 0^\circ, 1), (8, 0^\circ)\}.
\end{align*}
\end{ex}

\begin{rem}\label{rem: omega-full-dim-cones}
    It is easy to see that every preposet $([0,n], \preceq_\calB)$ in which $\type(\calB) \in \Omega_\bbm$ is a poset (every equivalent class in $([0,n], \preceq_\calB)$ is a singleton) whose Hasse diagram is a connected graph. Hence, by Lemma \ref{lem: preordcone}/\ref{itm: dimension}, every sliced preorder cone $\ssigma_\calB$ such that $\type(\calB) \in \Omega_\bbm$ is an $n$-dimensional cone.
\end{rem}

\begin{prop}\label{dissection}
    Suppose that $\bbm = (m_0, m_1, \dots, m_\ell)$ is a multiplicity vector of magnitude $n$. Let
    \[\calM := \lbrace \ssigma_\calB \subseteq \R^n\ \Big| \ \type(\calB) \in \Omega_\bbm \rbrace\]
     be the collection of sliced preorder cones corresponding to skewed binary partitions whose types are in $\Omega_\bbm.$ Then we have that
     \[\bigcup_{\ssigma \in \calM} \ssigma = \R^n.\]
\end{prop}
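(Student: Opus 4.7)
The plan is to prove that every $\bbc = (c_1, \ldots, c_n) \in \R^n$ lies in some cone $\ssigma_\calB \in \calM$ by constructing $\calB$ explicitly from $\bbc$. The compositions in $\Omega_\bbm$ are parameterized by the position at which $c_0 = 0$ is inserted into the sorted list of the other coordinates, with the block sizes dictated by $\bbm$. Consequently, a natural candidate $\calB$ is produced by sorting $c_1, \ldots, c_n$ and then chopping the sorted sequence into consecutive chunks matching one of the compositions in $\Omega_\bbm$.

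Concretely, I would fix a non-decreasing rearrangement $c_{\tau(1)} \le \cdots \le c_{\tau(n)}$ of $c_1, \ldots, c_n$ and set $\nu := \#\{j \in [n] : c_j < 0\}$. \textbf{If $\nu \le m_0$}, I choose the composition $\bba_0$: when $m_0 > 0$, put $B_{-1} = \{0, \tau(1), \ldots, \tau(m_0)\}$, $B_0 = \emptyset$, and let $B_1, \ldots, B_\ell$ be the consecutive chunks of $\tau(m_0+1), \tau(m_0+2), \ldots, \tau(n)$ of sizes $m_1, \ldots, m_\ell$; when $m_0 = 0$ (forcing $\nu = 0$), instead set $B_{-1} = \emptyset$ and $B_0 = \{0\}$, with the remaining blocks taken as before. \textbf{If $\nu > m_0$}, I set $i := \nu - m_0 \in \{1, \ldots, r\}$ and use $\bba_i$, placing $B_{-1} = \{\tau(1), \ldots, \tau(m_0 + i)\}$ and $B_0 = \{0\}$, then filling the remaining blocks by consecutive chunks of $\tau(m_0+i+1), \ldots, \tau(n)$ of sizes $m_1 + \cdots + m_g - i, m_{g+1}, \ldots, m_\ell$, with $g$ as in Definition~\ref{def: vertex-type}; the degenerate endpoint $i = r$ uses $\bba_r = (n, 0^\circ)$ with all of $[1,n]$ in $B_{-1}$.

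Verifying that $\bbc \in \ssigma_\calB$ for the constructed $\calB$ is then essentially immediate, and amounts to two checks. First, between any two consecutive non-empty blocks, the maximum of the earlier block's coordinates is at most the minimum of the later block's, because both blocks are consecutive chunks of a sorted list. Second, the position of $c_0 = 0$ relative to $B_{-1}, B_0, B_1, \ldots$ is consistent with the preorder precisely because $\nu$ was chosen to be the number of strictly negative coordinates: position $\nu+1$ in the sorted list is the first non-negative entry, which is exactly where $\bba_i$ puts the transition through $B_0$ when $\nu > m_0$, and similarly the $m_0+1$ smallest entries (together with $c_0$) comfortably absorb into $B_{-1}$ in the other case. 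Along the way one should also confirm that the tuples defined above really satisfy the axioms of Definition~\ref{defn:bwp} (nonempty $B_j$ for $j \ge 1$, correct placement of $0$, and the singleton/homogeneity condition), which is routine since each $m_j \ge 1$ for $j \ge 1$ and since $m_1 + \cdots + m_g - i \ge 1$ by the defining inequality for $g$. No genuinely hard step arises; the only care needed is in the edge cases $m_0 = 0$, $\nu = 0$, and $i = r$, and in tie-breaking among coordinates equal to $0$, which is harmless because the inequalities defining $\ssigma_\calB$ are non-strict.
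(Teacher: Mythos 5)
Your proposal is correct and follows essentially the same route as the paper: both arguments sort the coordinates, count the (strictly or weakly) negative entries to decide which composition in $\Omega_\bbm$ to use, and fill the blocks with consecutive chunks of the sorted sequence. The only cosmetic difference is the tie-breaking at zero coordinates (you count strict negatives; the paper uses the first strictly positive index), which, as you note, is immaterial since the cone inequalities are non-strict.
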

\begin{proof}
    Let $\bbw = (w_1, \dots, w_n) \in \R^n.$ We need to show that there exists $\ssigma_\calB \in \calM$ such that $\bbw \in \ssigma_\calB$. To see this, it suffices, due to symmetry, to only consider $w_1 \leq \dots \leq w_n.$ 
    
    If $w_n \leq 0,$ then $\bbw$ lies in the cone
    \[\{\bbc \in \R^n\ |\ c_1, \dots, c_n \leq 0\} = \ssigma_\calB \]
    where $\calB = (\{1, \dots, n\}, \{0\})$. Since $\type(\calB) = (n, 0^\circ) \in \Omega_\bbm$, it follows that $\ssigma_\calB \in \calM.$ 
    
    If $w_n > 0$, we let $i \in [n]$ be the least positive integer such that $w_i > 0$. For $s \in [0,\ell]$, let $t_s := m_0 + m_1 + \cdots + m_s.$ If $m_0 < i,$ by letting $j$ be the least integer such that $i \leq t_j$, we see that $j \geq 1$ and that $\bbw$ lies in the cone
    \[\{\bbc \in \R^n\ |\ c_1, \dots, c_{i-1} \leq 0 \leq c_{i}, c_{i+1},\dots, c_{t_j} \leq c_{t_j + 1}, \dots, c_{t_{j+1}} \leq \cdots \leq c_{t_{\ell-1}+1}, \dots, c_{t_{\ell}}\} = \ssigma_\calB\] 
    where $\calB =(\{1, \dots, i-1\}, \{0\}, \{i, i+1, \dots, t_j\}, \{t_j +1, \dots, t_{j+1}\}, \dots, \{t_{\ell-1}+ 1, \dots, t_{\ell}\}).$
    Because $\type(\calB) = (i-1, 0^\circ, m_1 + \cdots + m_j - i, m_{j+1}, \dots, m_\ell) \in \Omega_\bbm,$ we see that $\ssigma_\calB \in \calM.$ If $i \leq m_0$, then $0 < m_0 = t_0$ and $\bbw$ lies in the cone  
    \[\{\bbc \in \R^n\ |\ 0, c_1, \dots, c_{t_0} \leq c_{t_0+1}, \dots, c_{t_1} \leq \cdots \leq c_{t_{\ell-1}+1}, \dots, c_{t_{\ell}}\} = \ssigma_\calB \] 
    where $\calB = (\{0,1, \dots, t_0\}, \emptyset, \{t_0 +1, \dots, t_1\}, \dots, \{t_{\ell-1} +1, \dots, t_\ell\}).$ Since $\type(\calB) =$ \linebreak $ (m_0, 0, m_1, \dots, m_\ell) \in \Omega_\bbm,$ we have  $\ssigma_\calB \in \calM.$ This shows that $\bbw$ lies in some $\ssigma \in \calM.$ Hence, $\bigcup_{\ssigma \in \calM} \ssigma = \R^n.$
\end{proof}

\begin{defn}\label{def: v_b}
    Suppose that $(\bbm,\bbd)$ is an MD pair in which $\bbd = (d_1, \dots, d_\ell)$ and $\bbm = (m_0, m_1, \dots, m_\ell)$ is a multiplicity vector of magnitude $n$. 
    Let $\calB = (B_{-1}, B_0, B_1, \dots, B_k)$ be a skewed binary partition such that $\type(\calB) \in \Omega_\bbm.$ We define $\bbv_\calB = (v_1, \dots, v_n)$ to be the point in $\R^n$ given by
    \begin{align*}
        v_i = \begin{cases}
            0 &\text{ if } i \in B_{-1}\\
            d_{\ell - k + j} &\text{ if } i \in B_j \text{ for some } j \in [\ell]
        \end{cases}.
    \end{align*}
\end{defn}

\begin{ex}\label{ex:vertexcorrespondence} Consider the MD pair $(\bbm, \bbd)$ where $\bbm=(2, 3, 1, 2)$ and $\bbd = (4,6,8)$ that correspond to $\bbu = (0,0,4,4,4,6,8,8).$ Let 
        \[\calB = (\{0,2,3\}, \emptyset, \{1,6,7\},\{8\}, \{4,5\}) \text{ and } \calC = (\{1,3,4,5,8\}, \{0\}, \{2\}, \{6,7\})\]
        be the two skewed binary partitions given in Example \ref{ex:standard}. 
        
One sees that $\type(\calB)=(2,0,3,1,2)$ and $\type(\calC) = (5,0^\circ, 1,2)$, which are elements in $\Omega_\bbm$ as shown in Example \ref{ex: Omega_m}. Following Definition \ref{def: v_b}, we have
    \begin{align*}
        \bbv_\calB = (4,0,0,8,8,4,4,6) \text{ and } \bbv_{\calC} =  (0,6,0,0,0,8,8,0),
    \end{align*}
    which are two $\bbu$-extreme points in $\pf(\bbm,\bbd)=\pf(\bbu)$.
If we let $\calB'$ and $\calC'$ be the unique standard skewed binary partitions of types $(2,0,3,1,2)$ and $(5,0^\circ, 1,2)$ as in Example \ref{ex:standard}, then we get two more $\bbu$-extreme points
\begin{align*}
        \bbv_{\calB'} = (0,0,0, 4, 4, 4, 6, 8,8) \text{ and } \bbv_{\calC'} =  (0,0,0,0,0, 6, 8,8).
    \end{align*}
    
    \end{ex}

    One observes that the set of  
    $\bbu$-extreme points
    given in Example \ref{ex: Extremepoints} is exactly the set $\{\bbv_\calB \mid \type(\calB) \in \Omega_\bbm\}$. This is true in general for any parking function polytope $\pf(\bbm, \bbd)$ as we discuss in the remark below. 
    \begin{rem}\label{rem:vertex}
        Suppose that $(\bbm,\bbd)$ is the MD pair of $\bbu = (u_1, \dots, u_n)$ and $\calB$ is a skewed binary partition with $\type(\calB)=(a_{-1}, a_0, a_1, \dots, a_k) \in \Omega_\bbm$. 
        If $\calB$ is standard, then $\bbv_\calB = (w_1, \dots, w_n) := (0,\dots, 0, u_{a_{-1}+1}, \dots, u_n),$ in which the last $(n-a_{-1})$ terms are the same last $(n-a_{-1})$ positive numbers in $\bbu$.   
        
        In general (if $\calB$ is not standard), we can equivalently define $\bbv_\calB = (v_1, \dots, v_n)$ as the point that is the permutation of the point $(w_1, \dots, w_n)$  
	satisfying $v_i = w_t$ if $i \in B_j$, where $t = a_{-1} + a_1 \cdots + a_{j}.$ Hence, it follows that 
	\begin{equation}\label{eq:extreme=vB}
	\calX(\bbu) = \{\bbv_\calB \mid \type(\calB) \in \Omega_\bbm\},
\end{equation}
where $\calX(\bbu)$ is the set of  $\bbu$-extreme points. 
    \end{rem}
 It follows from Proposition \ref{prop:polytope} and the above remark that 
    \[ \pf(\bbm,\bbd) = \conv(\bbv_\calB \mid \type(\calB) \in \Omega_\bbm).\] 
The next theorem shows that $\{\bbv_\calB \mid \type(\calB) \in \Omega_\bbm\}$ is precisely the set of vertices of $\pf(\bbm, \bbd)$ and describes the normal cone at each vertex.

\begin{thm}\label{thm: fulldimcones}
    Let $(\bbm, \bbd)$ be an MD pair. Then, the map $\calB \mapsto \bbv_\calB$ defines a bijection between the set $\{\calB \mid \type(\calB) \in \Omega_\bbm\}$ and the set of the vertices of $\pf(\bbm,\bbd)$. Moreover, we have that $\ncone(\bbv_\calB,\pf(\bbm,\bbd)) = \ssigma_\calB.$
\end{thm}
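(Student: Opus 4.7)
The strategy is to invoke Lemma \ref{lem:nfan} with cones $\{\ssigma_\calB \mid \type(\calB) \in \Omega_\bbm\}$ and points $\{\bbv_\calB \mid \type(\calB) \in \Omega_\bbm\}$. Proposition \ref{prop:polytope} together with Remark \ref{rem:vertex} (equation \eqref{eq:extreme=vB}) already tells us that $\pf(\bbm,\bbd)$ is the convex hull of these points, and Proposition \ref{dissection} supplies the covering hypothesis $\bigcup_{\type(\calB)\in\Omega_\bbm}\ssigma_\calB = \R^n$. The only remaining, and main, hypothesis to check is the strict separation
\[
\bbc \cdot \bbv_\calB \;>\; \bbc \cdot \bbv_{\calB'} \quad \text{for every } \bbc \in \ssigma_\calB^\circ \text{ and every } \calB' \neq \calB \text{ with } \type(\calB') \in \Omega_\bbm.
\]

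My plan for the strict separation is a rearrangement-inequality argument. I will recast $\max_{\bbx \in \calX(\bbu)} \bbc \cdot \bbx$ as a two-step optimization, parametrizing $\bbu$-extreme points $\bbx$ by a pair $(i,\tau)$: the index $i \in \{0, 1, \dots, r\}$ controls the number of zero coordinates $a = m_0 + i$, while $\tau \in \fS_n$ places the tail values $(u_{a+1}, \dots, u_n)$ into the remaining positions. For fixed $i$, the classical rearrangement inequality yields
\[
\max_\tau \bbc \cdot \bbx \;=\; V(i) \;:=\; \sum_{j = m_0 + i + 1}^{n} c_{[j]}\, u_j,
\]
where $c_{[1]} \le \cdots \le c_{[n]}$ is the non-decreasing rearrangement of $\bbc$; the telescoping identity $V(i+1) - V(i) = -\,c_{[m_0 + i + 1]}\, u_{m_0 + i + 1}$ then reduces the optimization over $i$ to locating the sign change in the sorted $c$-values.

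The heart of the argument is to read off both the optimal $i^\star$ and the optimal $\tau$ directly from the block structure of $\preceq_\calB$. I will split into the two cases $0 \in B_0$ (when $a_0 = 0^\circ$) and $0 \in B_{-1}$ (when $a_0 = 0$). In each case, the strict cover inequalities characterizing the relative interior (Lemma \ref{lem: preordcone}/\ref{itm:minimaldescription}) yield that the $m_0 + i$ smallest entries of $\bbc$ are exactly those indexed by $B_{-1} \cap [n]$, and that $c_{[m_0+i+1]}$ — realized as the smallest $c$-coordinate inside $B_1$ — is strictly positive, thanks to the cover $0 \lessdot q$ for $q \in B_1$ together with $c_0 = 0$. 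This forces $i^\star = i$ uniquely via the telescoping formula. Within the positive portion, the strict inequalities $c_p < c_q$ for $p \in B_j,\, q \in B_{j+1}$ combined with $d_{\ell-k+j} < d_{\ell-k+j+1}$ imply that every cross-block swap strictly decreases $\bbc \cdot \bbv$, so the unique optimal permutation is the one producing the recipe for $\bbv_\calB$ in Definition \ref{def: v_b}. The delicate case-by-case bookkeeping needed to propagate strictness through both the inter-type and intra-type comparisons is where I expect the main work to lie; all the strict inequalities ultimately trace back to $\bbc$ lying in the \emph{interior} of $\ssigma_\calB$. Once strict separation is established, Lemma \ref{lem:nfan} delivers both the bijection $\calB \mapsto \bbv_\calB$ onto the vertex set of $\pf(\bbm,\bbd)$ and the equality $\ncone(\bbv_\calB,\pf(\bbm,\bbd)) = \ssigma_\calB$.
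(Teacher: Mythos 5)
Your proposal is correct and takes essentially the same approach as the paper's proof: both invoke Lemma~\ref{lem:nfan} with the covering from Proposition~\ref{dissection} and Remark~\ref{rem:vertex}, and both establish the strict separation via the rearrangement inequality together with a sign analysis of the coordinates of $\bbc\in\ssigma_\calB^\circ$ coming from Lemma~\ref{lem: preordcone}/\ref{itm:minimaldescription}. Your two-step optimization --- first over $\tau$ by rearrangement, then over $i$ via the telescoping of $V(i)$ --- is a reorganization of the paper's comparison chain $\bbc\cdot\bbv_\calB \ge \bbc\cdot\bbv_\calD \ge \bbc\cdot\bbv_{\calB'}$ (with $\calD$ the standard representative of $\type(\calB')$), where the first inequality corresponds to your telescoping step and the second to your inner maximization over $\tau$.
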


\begin{proof} Assume that $\bbu=(u_1,\dots,u_n)$ is the non-increasing vector corresponding to the MD pair $(\bbm, \bbd)$.
    Suppose that $\calB = (B_{-1}, B_0, B_1, \dots, B_k)$ satisfies $ \type(\calB) = (a_{-1}, a_0, \dots, a_k) \in \Omega_\bbm$. By Theorem \ref{dissection} and Lemma \ref{lem:nfan}, it suffices to show that, for every $\bbc \in \ssigma_\calB^\circ$ and every $\calB'$ such that $\type(\calB') \in \Omega_\bbm$, one has
    \begin{align}\label{neq: vertex-normal-cone}
        \bbc \cdot \bbv_\calB > \bbc \cdot \bbv_{\calB'} \text{ if } \calB \neq \calB'.
    \end{align}

    First, we may assume without loss of generality that $\calB$ is standard. (If $\calB$ is not standard, we can reorder the coordinates to make $\calB$ standard.)  Hence, by Remark \ref{rem:vertex}, we have that \[ \bbv_\calB = (v_{\calB,1}, \dots, v_{\calB,n}) = (0,\dots, 0, u_{a_{-1}+1}, \dots, u_n).\]
    Let $\bbc = (c_1, \dots, c_n) \in \ssigma_\calB^\circ$. Then, by Lemma \ref{lem: preordcone}/\ref{itm:minimaldescription}, the vector $\bbc$ satisfies 
   \[c_p <  c_q \text{ if } p \in B_i, q \in B_j, \text{ and } i < j\]
   where we set $c_0 = 0.$ One sees that $\bbc$ has the following properties:
   \begin{enumerate}[label=(C\arabic*)]
   \item\label{itm:c-} $c_p < 0$ for all $1 \le p \le a_{-1}$ (if there is any), 
   \item \label{itm:c+} $0 < c_p$ for all $p \geq a_{-1} + 1$, and 
   \item \label{itm:corder} $c_p < c_q$ provided $v_{\calB,p} < v_{\calB,q}$.
   \end{enumerate}
    By property \ref{itm:corder}, one sees that we can further assume without loss of generality that $\bbc$ is a non-decreasing vector. (By reordering coordinates, we can make $\bbc$ non-decreasing without changing $\calB$ or $\bbv_\calB$.) 

    Let $\calB' = (B'_{-1}, B'_0, B'_1, \dots, B'_t)$ be a skewed binary partition that satisfies $\type(\calB') = (a'_{-1}, a'_0, \dots, a'_t) \in \Omega_\bbm$.
Let $\calD$ be the standard skewed binary partition such that $\type(\calD) = \type(\calB').$ Then by Remark \ref{rem:vertex} again, we have that \[ \bbv_{\calD} = (0,\dots, 0, u_{a'_{-1}+1}, \dots, u_n).\]

We claim that
\begin{equation}\label{neq: vertex-normal-cone0}
\bbc \cdot \bbv_\calB \ge \bbc \cdot \bbv_\calD \geq \bbc \cdot \bbv_{\calB'},
\end{equation}
and both equalities hold if and only if $\calB = \calB'$. It is clear that \eqref{neq: vertex-normal-cone} follows from this claim. Hence, it is left to prove the claim.  Since $\bbv_\calD$ is obtained by permuting the coordinates of $\bbv_{\calB'}$ in a non-decreasing fashion, the second inequality follows from the rearrangement inequality. 
 
For the first inequality, we will show that strict inequality holds if $\calB \neq \calD.$ Suppose $\calB \neq \calD$. Because both $\calB$ and $\calD$ are standard, we must have that $a_{-1} \neq a'_{-1}$. If $a_{-1} > a'_{-1}$, then $\bbv_\calB$ and $\bbv_\calD$ differ in the terms between the $(a'_{-1}+1)$st position and the $(a_{-1})$th position, and we have
\[ \bbc \cdot \bbv_\calB - \bbc \cdot \bbv_\calD = \sum_{i=a'_{-1}+1}^{a_{-1}} c_i (0-u_i) = -\sum_{i=a'_{-1}+1}^{a_{-1}} c_i u_i >0,\]
where the last equality follows from property \ref{itm:c-}. If $a_{-1} < a'_{-1}$, then $\bbv_\calB$ and $\bbv_\calD$ differ in the terms between the $(a_{-1}+1)$st position and the $(a'_{-1})$th position, and we have
\[ \bbc \cdot \bbv_\calB - \bbc \cdot \bbv_\calD = \sum_{i=a_{-1}+1}^{a'_{-1}} c_i (u_i-0) = \sum_{i=a_{-1}+1}^{a'_{-1}} c_i u_i >0,\]
where the last equality follows from property \ref{itm:c+}. Therefore, we showed that the first inequality in \eqref{neq: vertex-normal-cone0} holds and its equality holds if and only if $\calB =\calD.$

We have established both inequalities in \eqref{neq: vertex-normal-cone0}. Now suppose both equalities hold:
\[ \bbc \cdot \bbv_\calB = \bbc \cdot \bbv_\calD = \bbc \cdot \bbv_{\calB'}.\]
As we argued above, the first equality implies that $\calB = \calD.$ Hence, we have that $\bbv_\calB'$ is a permutation of $\bbv_\calD=\bbv_\calB.$ However, since $\bbv_\calB$ is non-decreasing and the vector $\bbc$ satisfies property \ref{itm:corder}, it follows from the rearrangement inequality that we must have $\calB'=\calB.$ This completes the proof of the claim, and thus the proof of this theorem. 
\end{proof}

Theorem \ref{thm: fulldimcones} describes the full dimensional cones in $\Sigma(\pf(\bbm, \bbd))$, the normal fan of $\pf(\bbm, \bbd)$. Since the lower dimensional cones are faces of the full dimensional cones in $\Sigma(\pf(\bbm, \bbd))$, it follows from Lemma \ref{lem: preordcone}/\ref{itm: conefaces} that they correspond to contractions of the preorders $\preceq_\calB$ on $[0,n]$ where $\calB$ is a skewed binary partition such that $\type(\calB) \in \Omega_\bbm.$ Note that it follows from Lemma \ref{lem:contraction-skewed} that any such a contraction can be represented as a preorder associated with a skewed binary partition. Therefore, we give the following definition, and have an immediate corollary to Theorem \ref{thm: fulldimcones} and Lemma \ref{lem: preordcone}/\ref{itm: conefaces}.

\begin{defn}
    Let $\bbm$ be a multiplicity vector. 
    We define $\bwp(\bbm)$ to be the poset of all skewed binary partitions 
    $\calB$ such that $\calB$ is a contraction of some skewed binary partition $\calA$ satisfying $\type(\calA) \in \Omega_\bbm$, 
    ordered by contraction, i.e., $\calB, \calB' \in \bwp(\bbm)$ satisfty $\calB \leq \calB'$ if $\calB$ is a contraction of $\calB'$.
\end{defn}

\begin{cor}\label{cor: combinatorialType}
        Let $(\bbm,\bbd)$ be an MD pair. Then the map $\calB \mapsto \ssigma_\calB$ is a poset isomorphism from $\bwp(\bbm)$ to $\Sigma(\pf(\bbm,\bbd))$.  Therefore, the face poset of $\pf(\bbm,\bbd)$ is isomorphic to the dual of $\bwp(\bbm).$
\end{cor}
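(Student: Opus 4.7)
The plan is to combine Theorem \ref{thm: fulldimcones} with Lemma \ref{lem: preordcone}\ref{itm: conefaces}, Lemma \ref{lem:contraction-skewed}, and Lemma \ref{lem: normalfan-faceposet} to identify $\bwp(\bbm)$ with the normal fan of $\pf(\bbm,\bbd)$ as posets. First I would argue that the cones of $\Sigma(\pf(\bbm,\bbd))$ are precisely $\{\ssigma_\calC \mid \calC \in \bwp(\bbm)\}$. By Theorem \ref{thm: fulldimcones}, the maximal (top-dimensional) cones of $\Sigma(\pf(\bbm,\bbd))$ are exactly the $\ssigma_\calA$ with $\type(\calA) \in \Omega_\bbm$. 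Every cone of the normal fan is a face of some maximal cone, and by Lemma \ref{lem: preordcone}\ref{itm: conefaces}, the faces of $\ssigma_\calA$ are the sliced preorder cones $\ssigma_\preceq$ where $\preceq$ is a contraction of $\preceq_\calA$. Lemma \ref{lem:contraction-skewed} then ensures that every such $\preceq$ equals $\preceq_\calC$ for some skewed binary partition $\calC$, and by definition this $\calC$ lies in $\bwp(\bbm)$.

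Next I would verify that $\calC \mapsto \ssigma_\calC$ is a bijection from $\bwp(\bbm)$ to $\Sigma(\pf(\bbm,\bbd))$. Surjectivity is immediate from the previous paragraph. For injectivity, the key point is that $\calC$ is determined by $\preceq_\calC$: the preorder determines $\hat{\calC}$ up to the ordered list of its non-empty blocks, and Conditions \ref{itm:firsttwo}--\ref{itm:zerobelong} of Definition \ref{defn:bwp} force a unique reinsertion of an empty $B_{-1}$ or $B_0$. Concretely, inspecting where $0$ sits and whether the relevant bottom block of $\hat{\calC}$ is homogeneous, singleton, or non-homogeneous of size $\geq 2$, one recovers $\calC$ uniquely from $\preceq_\calC$. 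This is the main bookkeeping obstacle, but it is purely mechanical given the constraints in Definition \ref{defn:bwp}.

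For the order-preserving statement in both directions, I would simply invoke Lemma \ref{lem: preordcone}\ref{itm: conefaces}: $\ssigma_{\calC'}$ is a face of $\ssigma_\calC$ if and only if $\preceq_{\calC'}$ is a contraction of $\preceq_\calC$, which by Definition \ref{defn: bwp-preorder} is exactly the ordering $\calC' \leq \calC$ in $\bwp(\bbm)$. Combined with the bijection just established, this gives the poset isomorphism $\bwp(\bbm) \cong \Sigma(\pf(\bbm,\bbd))$ with the inclusion order on cones.

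Finally, Lemma \ref{lem: normalfan-faceposet} states that the face poset $\calF(\pf(\bbm,\bbd))$ is anti-isomorphic to $\calF(\Sigma(\pf(\bbm,\bbd)))$ via $F \mapsto \ncone(F,\pf(\bbm,\bbd))$. Composing this with the isomorphism $\bwp(\bbm) \cong \Sigma(\pf(\bbm,\bbd))$ yields the claimed duality between $\calF(\pf(\bbm,\bbd))$ and $\bwp(\bbm)$. The whole proof is therefore essentially a short assembly of the previously established results, with the only subtle checking being the uniqueness of the skewed binary partition associated to a given preorder.
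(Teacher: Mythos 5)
Your proposal is correct and follows essentially the same route the paper takes: assemble Theorem~\ref{thm: fulldimcones} (maximal cones), Lemma~\ref{lem: preordcone}\ref{itm: conefaces} (faces of a sliced preorder cone correspond to contractions), Lemma~\ref{lem:contraction-skewed} (contractions of skewed binary partitions remain skewed binary partitions), and Lemma~\ref{lem: normalfan-faceposet} (dual poset isomorphism between $\calF(P)$ and $\calF(\Sigma(P))$). The one place you go beyond the paper's terse "immediate corollary" is in checking injectivity of $\calC \mapsto \ssigma_\calC$ — verifying that a representable preorder determines a unique skewed binary partition once the empty block is reinserted according to Conditions~\ref{itm:firsttwo}--\ref{itm:zerobelong} — which is a genuine, if small, point the paper leaves implicit; your handling of it is correct.
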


Thus, the combinatorial types of parking function polytopes only depend on the multiplicity vector, i.e., two parking functions polytopes $\pf(\bbu_1)$ and $\pf(\bbu_2)$ have isomorphic face posets if $\bbm(\bbu_1) = \bbm(\bbu_2).$ As mentioned earlier, there are $2^n -1$ distinct multiplicity vectors $\bbm$ of magnitude $n$ which define $n$-dimensional parking functions $\pf(\bbm,\bbd)$. Figure \ref{fig: pf-types-dim2} shows that there are exactly three different combinatorial types of $2$-dimensional parking function polytopes $\pf(\bbm,\bbd)$, and two distinct multiplicity vectors correspond to different types.
\begin{figure}
    \centering
    \includegraphics[width=0.9\linewidth]{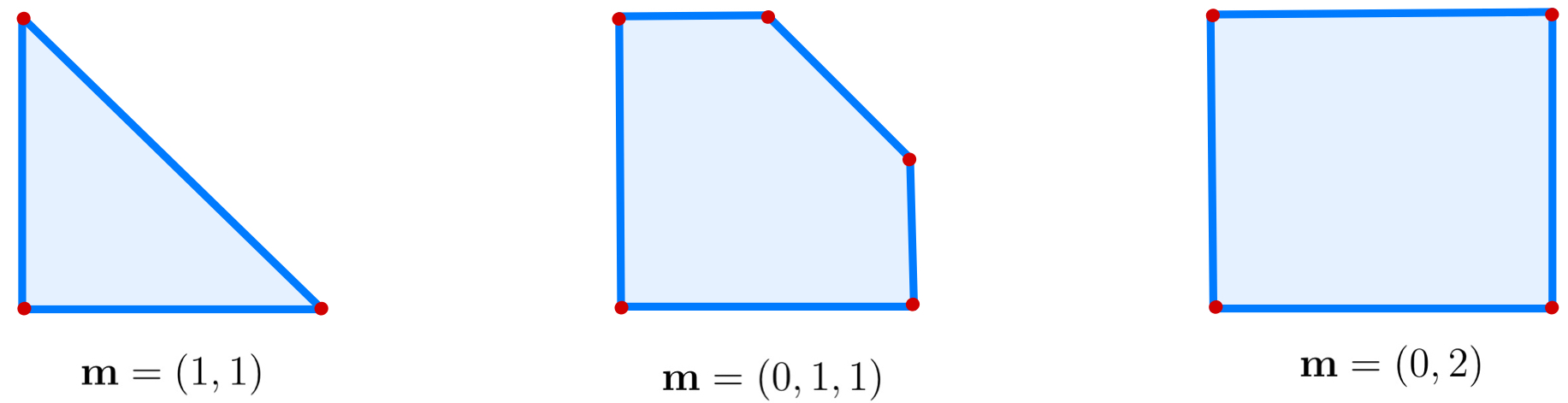}
    \caption{Three different combinatorial types of $2$-dimensional $\pf(\bbm,\bbd)$}
    \label{fig: pf-types-dim2}
\end{figure}

We finish this subsection by proving Proposition \ref{prop: pf-stellahedron}, an important consequence of Theorem \ref{thm: fulldimcones}.

\begin{proof}[Proof of Proposition \ref{prop: pf-stellahedron}]
  The multiplicity vector of $(1,2,\dots,n)$ is $(0, 1, 1, \dots, 1)$. Let $\Psi_n^{(0)} = \Omega_{(0,1,1,\dots,1)}$, and let
  $\Psi_n$ be the set of all skewed binary compositions $\bbb$ that is in $\Omega_\bbm$ for some multiplicity vector $\bbm$ of magnitude $n$. By Definition \ref{def: vertex-type}, one sees that $\Psi_n$ consists of all skewed binary compositions $\bbb=(b_{-1}, b_0, b_1, \dots, b_k)$ of $n$ satisfying 
  \[ (b_{-1}, b_0) \in \N \times \{0^\circ\} \cup \P \times \{0\}, \text{ and } b_i \in \P \text{ for all $1 \le i \le k.$}\]
  and $\Psi_n^{(0)}$ consists of all skewed binary compositions $\bbb=(b_{-1}, b_0, b_1, \dots, b_k)$ in $\Psi_n$ satisfying 
  \[ (b_{-1}, b_0) \in \N \times \{0^\circ\}, \text{ and } b_i =1 \text{ for all $1 \le i \le k.$}\] 
  Note that the number $k$ in both expressions is an arbitrary integer between $0$ and $n$.
  
  Recall that to show a fan $\Sigma'$ is a coarsening of another fan $\Sigma$, it suffices to show that every maximal cones of $\Sigma'$ is the union of a collection of maximal cones in $\Sigma.$ Hence, by Theorem \ref{thm: fulldimcones},  it is enough to show that 
\begin{equation}\label{eq:unioncones}
      \textbox{for every skewed binary partitions $\calB$ with $\type(\calB) \in \Psi_n$, there exists a collection of skewed binary partitions $\calA_1$, $\calA_2$, \dots, $\calA_k$ such that $\type(\calA_i) \in \Psi_n^{(0)}$ and $\displaystyle \ssigma_\calB = \cup_{i=1}^k \ssigma_{\calA_i}.$}
  \end{equation} 
Suppose $\calB = (B_{-1}, B_0, B_1, \dots, B_k)$ with $\type(\calB)=\bbb=(b_{-1}, b_0, b_1, \dots, b_k) \in \Psi_n$. We consider two cases: (i) $(b_{-1},b_0) \in \N \times \{0^\circ\}$, and (ii) $(b_{-1},b_0) \in \P \times \{0\}$. For convenience, we denote by $\Psi_n^{(i)}$ ($\Psi_n^{(ii)}$, respectively) the set of $\bbb \in \Psi_n$ that in case (i) (and case (ii), respectively). Hence, we have that $\Psi_n$ is the disjoint union of $\Psi_n^{(i)}$ and $\Psi_n^{(ii)}.$

Case (i): Suppose  $\bbb \in \Psi_n^{(i)}$ and thus $(b_{-1},b_0) \in \N \times \{0^\circ\}$. Then $B_0 = \{0\}.$ For each tuple $\mathbf{\tau}= (\tau_1, \tau_2, \dots, \tau_k) \in \fS(B_1) \times \fS(B_2) \times \cdots\times \fS(B_k)$ of permutations on $B_1$, $B_2$, \dots, $B_k$, we define
\[ \calB_{\mathbf{\tau}} = (B_{-1}, B_0, \{ \tau_1(1)\}, \dots, \{ \tau_1(b_1)\}, \{\tau_2(1)\}, \dots, \{ \tau_2(b_2)\},\dots, \{\tau_k(1)\},\dots, \{ \tau_k(b_k)\}),\]
which is a skewed binary partition with type $(b_{-1}, b_0, 1, 1, \dots, 1) \in \Psi_n^{(0)}.$ It is easy to verify that $\ssigma_\calB$ is the union of $\ssigma_{\calB_{\mathbf{\tau}}}$ over all tuples $\mathbf{\tau}= (\tau_1, \tau_2, \dots, \tau_k)$ in $\fS(B_1) \times \fS(B_2)\times \cdots \times \fS(B_k)$. Hence, we proved \eqref{eq:unioncones} for this case.

Case (ii):  Suppose  $\bbb \in \Psi_n^{(ii)}$ and thus $(b_{-1},b_0) \in \P \times \{0\}$. Then $B_0 = \emptyset$ and $0 \in B_{-1}.$ For each subset $S \subseteq B_{-1}\setminus\{0\}$, we define
\[ \calB_{S} = \begin{cases}(S, \{0\}, B_{-1}\setminus \{ \{0\} \cup S\}, B_1, B_2, \dots, B_k), & \text{if $S \neq B_{-1}\setminus\{0\}$} \\
(S, \{0\}, B_1, B_2, \dots, B_k), & \text{if $S = B_{-1}\setminus\{0\}$}.
\end{cases}\] 
One sees that $\calB_S$ is a skewed binary partition whose type is in $\Psi_n^{(i)}.$ Moreover, we can verify that $\ssigma_\calB$ is the union of $\ssigma_{\calB_{S}}$ over all subsets $S$ of $B_{-1}\setminus \{0\}.$ Hence, Given that we have proved \eqref{eq:unioncones} for all $\bbb \in \Psi_n^{(i)}$, \eqref{eq:unioncones} holds for case (ii) as well. 
\end{proof}

\subsection{Characterization of \texorpdfstring{$\bwp(\bbm)$}{SBP(m)}}

Given Corollary \ref{cor: combinatorialType}, it is natural to ask whether we can give a description of skewed binary partitions $\calB$ in $\bwp(\bbm)$. In order to have a better idea of how we can describe them, we present an example below.

\begin{ex}\label{ex:contractions} Consider the MD pair $(\bbm, \bbd)$ where $\bbm=(2, 3, 1, 2)$ and $\bbd = (4,6,8)$.
Let $\calB = (\{0,2,3\}, \emptyset, \{1,6,7\},\{8\}, \{4,5\})$ be the skewed binary partition shown on the top of Figure \ref{fig:contractions}, which is of type $(2,0,3,1,2)$ and corresponds to the sliced preorder cone
\[\ssigma_\calB = \{(c_1, \dots, c_8) \in \R^8  \ |\ 0,c_2,c_3 \leq c_1,c_6,c_7 \leq c_8 \leq c_4, c_5 \}.\]
By Example \ref{ex:vertexcorrespondence} and Theorem \ref{thm: fulldimcones}, we have that $\ssigma_\calB$ is the normal cone at the vertex $\bbv_\calB = (4,0,0,8,8,4,4,6)$. 

The table below show examples of skewed binary partitions in a maximal chain of $\bwp(\bbm)$, each of which corresponds to a contraction of $\preceq_\calB.$ They also correspond to the normal cones in $\Sigma(\pf(\bbm,\bbd))$ of dimensions 8 to 0 (in decreasing order of dimensions). The first three skewed binary partitions are displayed in Figure \ref{fig:contractions}. 

\centering
\begin{tabular}{r|l}
ordered bi-weekly partition  &  type   \\ \hline 
$(\{0,2,3\}, \emptyset, \{1,6,7\},\{8\}, \{4,5\})$ &  $(2, 0, 3, 1, 2)$ \\ \hline
$(\{0,2,3\}, \emptyset, \{6,7\},\{1,8\}^\star, \{4,5\})$ &  $(2, 0, 2, 2^\star,2)$ \\ \hline
$(\{2,3\}, \{0,7\}^\star, \{6\},\{1,8\}^\star, \{4,5\})$ &  $(2, 1^\circ, 1, 2^\star,2)$ \\ \hline
$(\{3\}, \{0,2,7\}^\star, \{6\},\{1,8\}^\star, \{4,5\})$ & $(1, 2^\circ, 1, 2^\star,2)$\\ \hline
$(\{3\}, \{0,2,7\}^\star, \{1,6,8\}^\star, \{4,5\})$ & $(1, 2^\circ, 3^\star,2)$\\ \hline
$(\{3\}, \{0,2,7\}^\star, \{1,5,6,8\}^\star, \{4\})$ & $(1, 2^\circ, 4^\star,1)$\\ \hline
$(\{3\}, \{0,1,2,5,6,7,8\}^\star, \{4\})$ & $(1, 6^\circ, 1)$\\ \hline
$(\emptyset,\{0,1,2,3,5,6,7,8\}^\star, \{4\})$ & $(0, 7^\circ, 1)$\\ \hline
$(\emptyset,\{0,1,2,3,4,5,6,7,8\}^\star)$ & $(0, 8^\circ)$
\end{tabular}
\end{ex}

Due to the symmetry of parking function polytope, one sees that if $\calB$ is in $\bwp(\bbm),$ so is every skewed binary partition of the same type as $\calB$. In the following proposition, we characterize the types of the ordered skewed binary partitions corresponding to the normal cones in $\Sigma(\pf(\bbm,\bbd))$.

\begin{prop}\label{prop: type-characterization}
    Suppose that $\bbm = (m_0, \dots, m_\ell)$ is a multiplicity vector of magnitude $n$, and $\calB$ is a skewed binary partition of $[0,n]$. Then $\calB$ is in $\bwp(\bbm)$ if and only if $\type(\calB) = (b_{-1}, b_0, \dots, b_p)$ is a skewed binary composition satisfying the following conditions.
    \begin{enumerate}[label=(\arabic*)]
        \item\label{itm: firsttwoblocks} $0 < |b_{-1}|+|b_0| \leq m_0$ if and only if $b_0 = 0$.
        \item\label{itm: positiveblocks} $m_0 < |b_{-1}| + |b_0| + |b_1|$ \footnote{If $\calB = (B_{-1}, B_0)$, then this inequality becomes $m_0 < |b_{-1}| + |b_0|$.} and for every positive integer $i \leq \ell$, there exists at most one positive integer $j$ such that
        \begin{align}
            \label{eq:partialsumcondition} 
            m_0 + \cdots + m_{i-1} \leq |b_{-1}| + \cdots + |b_{j-1}| < |b_{-1}| + \cdots + |b_{j}| \leq  m_0 + \cdots + m_{i}.
        \end{align}
        \item\label{itm: blocktypes} If $j$ is a positive integer such that there exists a positive integer $i$ satisfying \eqref{eq:partialsumcondition}, then $b_j \in \P.$ Otherwise, $b_j \in \P^\star$ for $1 \leq j \leq p.$
    \end{enumerate}
\end{prop}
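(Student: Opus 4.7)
The plan is to prove both directions of the equivalence using Theorem~\ref{thm: graph-characterization} (the bipartite graph characterization of contractions) together with the structural fact that every nonempty block of any $\calA$ with $\type(\calA) \in \Omega_\bbm$ is non-homogeneous: in standard form, $\calA$'s blocks are intervals whose positive-position boundaries lie in the cumulative-sum set $\{M_0, M_1, \ldots, M_{\ell-1}\}$, where $M_i := m_0 + \cdots + m_i$, possibly with one extra boundary $m_0 + i$ strictly between $M_{g-1}$ and $M_g$ when $\type(\calA) = \bba_i$ for $i \ge 1$. Throughout I take $\calA, \calB$ to be standard (harmless since the conditions are type-invariant), and I write $P_j := |b_{-1}| + \cdots + |b_j|$, so that the positive positions of $B_j$ for $j \ge 1$ are exactly $\{P_{j-1}+1,\ldots,P_j\}$.

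For the forward direction, suppose $\calB$ is a contraction of some $\calA$ with $\type(\calA) \in \Omega_\bbm$. Applying Theorem~\ref{thm: graph-characterization} to $G(\hat{\calA},\hat{\calB})$ and using that every left vertex is non-homogeneous, each non-homogeneous right vertex $B_j$ is a subset of a unique $A_k$, each homogeneous right vertex has degree at least two, and each left vertex has $\deg^\vee \le 1$. For condition (1), if $b_0 = 0$ then $0 \in B_{-1}$ and the unique $A_k \supseteq B_{-1}$ must contain $0$; the only such case in $\Omega_\bbm$ is $\bba_0$ with $m_0 > 0$, so $|B_{-1}| \le m_0 + 1$ and hence $0 < |b_{-1}| \le m_0$. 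Conversely, when $b_0 \in \N^\circ$, a case analysis on $\type(\calA)$ yields $P_0 \in \{0\} \cup (m_0, n]$. The inequality $m_0 < P_1$ in (2) follows by a short case argument: if $B_1$ were entirely inside the region $\{1,\ldots, m_0\}$ it would lie inside $A_{-1}$, but $A_{-1}$ already hosts the non-homogeneous $B_{-1}$, violating $\deg^\vee(A_{-1}) \le 1$. The ``at most one $j$ per $i$'' clause of (2) holds because each interval $[M_{i-1}+1, M_i]$ contains at most one of $\calA$'s positive blocks, each of which hosts at most one non-homogeneous $B_j$. For condition (3), a non-homogeneous $B_j$ with $j \ge 1$ lies in a unique $A_k$ which itself lies within a unique $[M_{i-1}+1, M_i]$, giving the sandwich; a homogeneous $B_j$ with $j \ge 1$ must merge elements from two distinct $A_k$-blocks, since elements within a single $A_k$ are pairwise incomparable in $\preceq_\calA$ and the only way to merge them is through an adjacent-block intermediary (producing a class spanning multiple $A_k$'s), so $B_j$ straddles some $M_{k'}$-boundary and one verifies that no $i$ satisfies \eqref{eq:partialsumcondition}.

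For the backward direction, suppose $\type(\calB)$ satisfies (1)-(3). Set $i := 0$ when $b_0 = 0$ or $P_0 = 0$, and $i := P_0 - m_0 \ge 1$ otherwise; by (1) these cases are exhaustive and $\bba_i \in \Omega_\bbm$. Let $\calA$ be the standard skewed binary partition with $\type(\calA) = \bba_i$. I verify the five conditions of Theorem~\ref{thm: graph-characterization} for $G(\hat{\calA}, \hat{\calB})$: non-crossing is automatic from standardness (using Lemma~\ref{lem: standard-crossing} to handle where $0$ sits); condition (3) on $\calB$ matches each non-homogeneous $B_j$ with a unique $A_k$ via multiplicity intervals; condition (2) on $\calB$ ensures $\deg^\vee(A_k) \le 1$; and each homogeneous $B_j$ straddles an $\calA$-boundary, giving $\deg(B_j) \ge 2$. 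The main obstacle is verifying the degree bounds at $A_{-1}$ and $A_0$, especially in the sub-case $b_0 \in \N^\circ$ with $|b_0| \ge 1$ where the homogeneous $B_0$ absorbs $A_0 = \{0\}$ together with elements of $A_{-1}$; here the calibration $|A_{-1}| = P_0$ (built into $i = P_0 - m_0$) guarantees $B_{-1} = \{1, \ldots, |b_{-1}|\} \subseteq A_{-1}$ and $B_0 \cap A_{-1} = \{|b_{-1}|+1, \ldots, P_0\}$, so that $A_{-1}$'s unique non-homogeneous neighbor is $B_{-1}$.
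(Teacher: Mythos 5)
Your proof takes essentially the same approach as the paper's: for both directions you translate the contraction relation into conditions on $G(\hat{\calA},\hat{\calB})$ via Theorem~\ref{thm: graph-characterization}, you exploit the fact that every block of $\calA$ is non-homogeneous when $\type(\calA)\in\Omega_\bbm$, and for the converse you reconstruct $\calA$ from $\calB$ by the same calibration $i = P_0 - m_0$ (or $i=0$), matching exactly the two cases in the paper's proof. What the paper does more carefully, and which you inline in compressed form, are the two auxiliary lemmas (Lemma~\ref{lem: m0-first-three-blocks} and Lemma~\ref{lem: degree-two}) that isolate the key facts: $A_{-1}\cup A_0$ is a proper subset of $B_{-1}\cup B_0 \cup B_1$, and a right vertex $B_j$ has degree $\ge 2$ iff some cumulative threshold $m_0+\cdots+m_k$ lies strictly between $P_{j-1}$ and $P_j$. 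Your inlined version of the first of these is slightly off in an edge case: when $B_{-1}=\emptyset$ the argument ``$A_{-1}$ already hosts the non-homogeneous $B_{-1}$'' no longer applies; the correct way to close that case (as the paper does) is to argue that if $B_1\subseteq A_{-1}$, non-crossing forces $B_0$ to also be adjacent only to $A_{-1}$, which then makes $B_{-1}$/$B_0$/$B_1$ all non-homogeneous and $\deg^\vee(A_{-1})\ge 2$. Likewise, your ``at most one $j$ per $i$'' argument is phrased as if the interval $[M_{i-1}+1, M_i]$ is always contained in a single $A_k$, which fails for the interval straddling $A_{-1}$ and $A_1$; the paper's version instead derives $P_j = \sum_{r\le s}|a_r| = M_{g+s-1}$ from a chain of inequalities that collapses to equalities, then gets a contradiction from $M_{i-1}<P_j<M_i$. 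These are repairable gaps in the same spirit of argument rather than a different route, so the proposal should be read as a correct but substantially abbreviated rendering of the paper's proof, with the quoted details to be restored.
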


Before proving Proposition \ref{prop: type-characterization}, we first establish a few auxiliary results. 

\begin{lem}\label{lem: m0-first-three-blocks}
    Let $\bbm = (m_0, \dots, m_\ell)$ be a multiplicity vector, and $\calA = (A_{-1}, A_0, \dots, A_q)$ a skewed binary partition such that $\type(\calA) = (a_{-1}, a_0, \dots, a_q) \in \Omega_\bbm.$ If $\calB$ is a contraction of $\calA$, then $\type(\calB) = (b_{-1}, b_0, \dots, b_p)$ satisfies $m_0 < |b_{-1}| + |b_0| + |b_1|$.

    Moreover, if $p \ge 1$ is a positive integer, then for any $j \in [p],$ the right vertex $B_{j}$ of the graph $G(\hat{\calA},\hat{\calB})$ must be adjacent to a left vertex $A_s$ for some positive integer $s \in [q]$.
\end{lem}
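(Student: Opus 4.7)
The plan is to prove both statements by establishing the structural inclusion $A_{-1} \cup A_0 \subseteq B_{-1} \cup B_0 \cup B_1$; then Part 2 follows by contradiction, and Part 1 by a cardinality count.

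The element $0$ lies in $B_{-1} \cup B_0$ by Definition~\ref{defn:bwp}, so fix $x \in A_{-1}$ with $x \neq 0$ and consider its equivalence class $[x]$ in $\preceq_\calB$. If $0 \in A_0$, then $x \prec_\calA 0$ gives $x \preceq_\calB 0$, placing $x$ into $B_{-1}$ (when $x \prec_\calB 0$ strictly) or into $B_0$ (when $x \equiv_\calB 0$, in which case $|[0]| \geq 2$ forces $B_0 = [0]$). If instead $0 \in A_{-1}$ and $[x] = \{x\}$, then $x$ remains a minimal element of the Hasse diagram of $\preceq_\calB$ alongside $0$, so $x \in B_{-1}$. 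In the remaining case $0 \in A_{-1}$ with $|[x]| \geq 2$, I claim $[x]$ must contain an element of $A_1$: during the contraction process the first class $C$ that $\{x\}$ is merged with is necessarily a cover of $\{x\}$ in the evolving Hasse diagram, and if $C \cap A_1 = \emptyset$, then any $y' \in A_1$ outside $C$ would satisfy $\{x\} \prec [y'] \prec C$ (because $x \prec_\calA y'$ and $y' \prec_\calA$ every element of $C \subseteq A_2 \cup \cdots \cup A_q$), contradicting the cover relation. A depth count in $\preceq_\calB$—using that every class strictly below $[x]$ is either an unmerged singleton from $A_{-1}$ or the singleton class $[0]$—then shows $[x]$ has Hasse-depth exactly one, which a short case check on whether $B_{-1}$ and $B_0$ are empty translates into $x \in B_0 \cup B_1$.

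Part 2 follows at once: if $B_j \subseteq A_{-1} \cup A_0$ for some $j \in [p]$, then $0 \notin B_j$ gives $B_j \subseteq A_{-1} \setminus \{0\}$, and applying the trichotomy above to any $z \in B_j$ either lands $z$ in $B_{-1} \cup B_0$ (contradicting $z \in B_j$ with $j \geq 1$) or produces an $A_1$-element in $[z] \subseteq B_j$, contradicting $B_j \cap A_1 = \emptyset$. For Part 1 with $p \geq 1$, Remark~\ref{rem:mzero2} gives $|B_{-1} \cup B_0 \cup B_1| \geq |A_{-1} \cup A_0| = |a_{-1}| + 1 \geq m_0 + 1$, and Part 2 supplies an additional element of $B_1$ outside $A_{-1} \cup A_0$, raising this to $\geq m_0 + 2$. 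A direct case check of Definition~\ref{defn:type} yields $|b_{-1}| + |b_0| + |b_1| = |B_{-1} \cup B_0 \cup B_1| - 1$, so $|b_{-1}| + |b_0| + |b_1| \geq m_0 + 1 > m_0$. The $p = 0$ case of the footnote reduces to $m_0 < |b_{-1}| + |b_0| = n$, which is Remark~\ref{rem:mzero1}.

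The main obstacle is the cover-relation claim for $\{x\}$ in the case $0 \in A_{-1}$: it requires reasoning about every intermediate Hasse diagram along the contraction sequence, not just $\preceq_\calA$ and $\preceq_\calB$. An alternative route using the non-crossing structure of $G(\hat\calA, \hat\calB)$ from Theorem~\ref{thm: graph-characterization} together with the partial-sum inequalities of Lemma~\ref{lem: block-intersection} would bypass this order-theoretic bookkeeping, but at the cost of noticeably heavier combinatorial accounting of block sizes.
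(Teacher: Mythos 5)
Your proof takes a genuinely different route from the paper — direct element-by-element reasoning about preorders and contraction sequences — and it has a real gap at exactly the spot you flag. In case (c) ($0\in A_{-1}$, $|[x]|\ge 2$), the argument hinges on the parenthetical assertion $C\subseteq A_2\cup\cdots\cup A_q$ for the cover $C$ of $\{x\}$ in the intermediate preorder. But since $0\in A_{-1}$ forces $A_0=\emptyset$, what you actually know is $C\subseteq A_{-1}\cup A_2\cup\cdots\cup A_q$ (given $C\cap A_1=\emptyset$), and nothing in your argument rules out $C\cap A_{-1}\neq\emptyset$: other minimal singletons from $A_{-1}$ can be merged upward by earlier contractions and land in the block covering $\{x\}$. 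If $c\in C\cap A_{-1}$, then $y'\prec_\calA c$ fails, and the proposed intermediate class $[y']$ need not sit strictly below $C$, so the cover contradiction does not go through. Closing this gap amounts to proving a contiguity property for equivalence classes of contractions — which is essentially the non-crossing condition of Theorem~\ref{thm: graph-characterization}, the very machinery you are trying to avoid. The subsequent "depth count" showing $[x]$ has Hasse-depth one is similarly asserted rather than argued.

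The paper's proof goes through the graph $G(\hat\calA,\hat\calB)$ and is considerably lighter than you anticipate: it uses only the non-crossing condition and conditions \ref{itm: left-nonhomog1} and \ref{itm: right-homog1} of Theorem~\ref{thm: graph-characterization}, not the partial-sum bookkeeping of Lemma~\ref{lem: block-intersection}. Concretely: assume for contradiction that $B_1$ is adjacent only to $A_{-1}$ (it cannot meet $A_0\subseteq\{0\}$). Non-crossing then forces $B_{-1}$ and $B_0$ (whichever is nonempty) to also be adjacent only to $A_{-1}$. Since every block of $\calA$ is non-homogeneous (as $\type(\calA)\in\Omega_\bbm$), condition \ref{itm: right-homog1} forces $B_{-1}$, $B_0$, $B_1$ all to be non-homogeneous, and then $A_{-1}$ is adjacent to at least two right non-homogeneous vertices, contradicting condition \ref{itm: left-nonhomog1}. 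This yields the proper inclusion $A_{-1}\cup A_0\subsetneq B_{-1}\cup B_0\cup B_1$ at once, from which both conclusions follow via Remark~\ref{rem:mzero2} and non-crossing. Your cardinality bookkeeping at the end is correct, as is the reduction of Part 2 to the structural inclusion; the weak link is establishing case (c) without the graph characterization.
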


\begin{proof}
    We first consider the case where $\calB = (B_{-1},B_0).$ Then $B_{-1}\cup B_0 = [0,n]$ and so $n = |b_{-1}| + |b_0|.$ By Remark \ref{rem:mzero1}, we have $m_0 < n = |b_{-1}| + |b_0|.$  

    Suppose $\calB = (B_{-1}, B_0, \dots, B_p)$ for some $p \geq 1.$ Note that one of $B_{-1}$ and $B_0$ can be empty, and thus may not be a vertex of $G(\hat{\calA},\hat{\calB})$. We claim that
    \begin{align}\label{eq: A-1-proper-subset}
	      A_{-1} \cup A_{0}  \text{ is a proper subset of } B_{-1}\cup B_0 \cup B_1,
    \end{align}
    which implies that 
    \[ |a_{-1}| + |a_0| < |b_{-1}| + |b_0| + |b_1|\le |b_{-1}| + |b_0| + |b_1| + \cdots + |b_j|.\] 
    One sees that the first conclusion of the lemma follows from the above inequality and Remark \ref{rem:mzero2}, and the second conclusion of the lemma follows from the above inequality and the non-crossing property of $G(\hat{\calA},\hat{\calB})$. Therefore, it is left to prove the claim. 
    By the non-crossing property of $G(\hat{\calA},\hat{\calB})$, it suffices to show that the vertex $B_1$ is adjacent to $A_t$ for some positive integer $t \in [p]$. Since $0 \not\in B_1$ and $A_{0}$ is either $\{0\}$ or $\emptyset$, the vertex $B_1$ of $G(\hat{\calA},\hat{\calB})$ is not adjacent to $A_0$.  Assume for the sake of contradiction that the vertex $B_{1}$ of $G(\hat{\calA},\hat{\calB})$ is only adjacent to the non-homogeneous vertex $A_{-1}$. Then, the non-crossing property of $G(\hat{\calA},\hat{\calB})$ implies that $B_{-1}$ (if nonempty) and $B_{0}$ (if nonempty) are only adjacent to $A_{-1}$ as well. Thus, by Theorem $\ref{thm: graph-characterization}/\ref{itm: right-homog1}$, the vertices $B_{-1}$ (if nonempty), $B_0$ (if nonempty) and $B_1$ are non-homogeneous. However, since at least one of $B_{-1}$ and $B_0$ is nonempty, the left non-homogeneous vertex $A_{-1}$ is adjacent to at least two non-homogeneous vertices in $\{B_{-1}, B_0, B_1\}$, contradicting Theorem $\ref{thm: graph-characterization}/\ref{itm: left-nonhomog1}$. This completes the proof.
\end{proof}

\begin{lem}\label{lem: degree-two}
	Let $\bbm = (m_0, \dots, m_\ell)$ be a multiplicity vector, and $\calA$ a skewed binary partition such that $\type(\calA) \in \Omega_\bbm.$ Suppose that $\calB = (B_{-1}, B_0, \dots, B_p)$ is a skewed binary partition that is a contraction of $\calA$, for some positive integer $p$. Let $j \in [p]$ be a positive integer. Then the right vertex $B_j$ of $G(\hat{\calA},\hat{\calB})$ has degree at least two if and only if there exists a nonnegative integer $k < \ell$ such that $\type(\calB) = (b_{-1}, \dots, b_p)$ satisfies 
    \begin{equation}\label{eq:bbounds}
        |b_{-1}| + |b_0| + \cdots + |b_{j-1}| < m_0 + \cdots +m_k  < |b_{-1}| + |b_0| + \cdots + |b_{j}|.
    \end{equation}
\end{lem}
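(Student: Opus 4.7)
The plan is to translate the degree condition on the right vertex $B_j$ into a statement about partial sums of block cardinalities. Since $\calB \le \calA$, Theorem~\ref{thm: graph-characterization} guarantees that $G(\hat{\calA}, \hat{\calB})$ is non-crossing, so Lemma~\ref{lem: block-intersection}/(2) applies. A standard interval-overlap analysis then shows that the degree of $B_j$ equals one plus the number of internal cumulative cardinalities $T$ of $\hat{\calA}$ (that is, sums of the sizes of a proper prefix of the blocks of $\hat{\calA}$) for which $T - 1$ lies strictly between $|b_{-1}| + \cdots + |b_{j-1}|$ and $|b_{-1}| + \cdots + |b_j|$. Therefore $B_j$ has degree at least two if and only if there exists such an internal $T$ with
\[
|b_{-1}| + \cdots + |b_{j-1}| < T - 1 < |b_{-1}| + \cdots + |b_j|.
\]

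Next I identify these internal values using $\type(\calA) \in \Omega_\bbm$. By Definition~\ref{def: vertex-type}, $\type(\calA)$ is either $\bba_0$ or $\bba_i$ for some $1 \le i \le r$. In the first case, a direct calculation shows that the set of internal values $T - 1$ is exactly $\{m_0 + m_1 + \cdots + m_k : 0 \le k \le \ell - 1\}$, and the claimed equivalence follows immediately. In the second case, where $A_0 = \{0\}$ and $|A_{-1}| = m_0 + i$, the internal set contains, in addition to $\{m_0 + m_1 + \cdots + m_k : g \le k \le \ell - 1\}$, the two extra values $m_0 + i - 1$ and $m_0 + i$ corresponding to the boundaries flanking $A_0$.

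The main obstacle is resolving these two extras in the second case; the key leverage is that $0 \in B_{-1} \cup B_0$, so $B_j \cap \{0\} = \emptyset$ for $j \ge 1$. For the forward direction, this means $B_j$ cannot straddle either boundary adjacent to $A_0 = \{0\}$, ruling out $m_0 + i - 1$ and $m_0 + i$ as witnesses; hence the straddled boundary must correspond to $m_0 + \cdots + m_k$ with $g \le k \le \ell - 1 < \ell$. For the converse, I will establish the inequality $|b_{-1}| + \cdots + |b_{j-1}| \ge m_0 + i$ for all $j \ge 1$ in this case. This follows because every $x \in A_{-1} \cup A_0$ satisfies $x \preceq_\calA 0$; since contractions preserve the preorder, also $x \preceq_\calB 0$, forcing $A_{-1} \cup A_0 \subseteq B_{-1} \cup B_0$, whence $|B_{-1}| + |B_0| \ge m_0 + i + 1$, that is, $|b_{-1}| + |b_0| \ge m_0 + i$. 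Given this, if $k < g$ then $m_0 + \cdots + m_k \le m_0 + m_1 + \cdots + m_{g-1} \le m_0 + i \le |b_{-1}| + \cdots + |b_{j-1}|$, contradicting the strict lower bound; so any witnessing $k$ must satisfy $k \ge g$, and the corresponding internal $T - 1 = m_0 + \cdots + m_k$ indeed certifies degree $\ge 2$.
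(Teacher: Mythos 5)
Your proof is correct, and it takes a genuinely different organizational route from the paper's. The paper's argument reduces the degree condition to an explicit case analysis on which blocks $A_s, A_t$ of $\hat\calA$ are adjacent to $B_j$: it first invokes Lemma~\ref{lem: m0-first-three-blocks} to guarantee adjacency to some $A_s$ with $s \ge 1$, then splits on whether a second neighbor $A_t$ has $t\ge 1$ or $t=-1$, and handles the converse by splitting on $k\ge g$ versus $k<g$. Your approach instead collapses the degree computation into a single statement: degree of $B_j$ equals one plus the number of internal cumulative cardinalities $T$ of $\hat\calA$ satisfying $|b_{-1}|+\cdots+|b_{j-1}| < T-1 < |b_{-1}|+\cdots+|b_j|$ (a consequence of Lemma~\ref{lem: block-intersection} together with the non-crossing property, though you only sketch this count). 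You then enumerate those internal values from $\type(\calA)\in\Omega_\bbm$ and observe that the two "extra" values flanking $A_0=\{0\}$ can never be straddled by $B_j$ ($j\ge 1$), since $0\notin B_j$ — this directly plays the role that Lemma~\ref{lem: m0-first-three-blocks} plays in the paper. For the converse you also prove a sharper observation the paper does not: in the $\bba_i$ ($i\ge1$) case, any witnessing $k$ must already satisfy $k\ge g$ because $A_{-1}\cup A_0\subseteq B_{-1}\cup B_0$ forces $|b_{-1}|+|b_0|\ge m_0+i$; the paper instead handles $k<g$ by a separate (ultimately vacuous, in that subcase) argument. Minor remarks: (a) when $i=r$, $\hat\calA=(A_{-1},A_0)$ has only the single internal value $T-1=m_0+i-1$, not two extras, but since $m_0+i=n$ can never be strictly below $\sum_{r\le j}|b_r|\le n$, the discrepancy is harmless; (b) the claim "contractions preserve the preorder," while true, is used without justification — it could equally be obtained via the non-crossing property of $G(\hat\calA,\hat\calB)$ together with $0\in B_{-1}\cup B_0$. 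Your argument buys a cleaner, more unified bookkeeping; the paper's buys more explicit intermediate structure (naming the adjacent blocks) at the cost of more case splitting.
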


\begin{proof} Let $j \in [p]$ be a positive integer. Let us write $\calA = (A_{-1}, A_0, \dots, A_q)$ and $\type(\calA) = (a_{-1}, a_0, \dots, a_q).$ 
Since $\type(\calA) \in \Omega_\bbm,$ we may write 
    \begin{equation}\label{eq: typeA}
    (a_{-1}, a_0, \dots, a_q) = (a_{-1}, a_0, m_0 + \cdots + m_{g} - a_{-1}, m_{g+1}, \dots, m_\ell),
    \end{equation}
    where $a_0 \in \{0, 0^\circ\}$, $g = \ell-q+1$, and 
    \begin{equation}\label{eq:a-1bound} 
    |a_{-1}| \ge m_0 + m_1 + \cdots +m_{g-1}. 
    \end{equation}
    Clearly, $A_{0}$ is either $\{0\}$ or $\emptyset$. 

    $(\Longrightarrow)$ Suppose that the right vertex $B_j$ of $G(\hat{\calA},\hat{\calB})$ has degree at least two. By Lemma \ref{lem: m0-first-three-blocks}, we have that the right vertex $B_{j}$ of $G(\hat{\calA},\hat{\calB})$ must be adjacent to a vertex $A_s$ for some positive integer $s \in [q]$. Since $B_j$ has degree at least two, we may let $A_t$ where $t \neq s$ be another left vertex of $G(\hat{\calA},\hat{\calB})$ adjacent to $B_j$. We first consider $t \geq 1$, in which case we may assume without loss of generality that $1 \leq s < t.$ 
    Note that, for $f \in [0,q]$ and $h \in [0,p]$, one has
    \begin{align}\label{eq: partial-sums}
        \sum_{r = -1}^f|A_r|= 1 + \sum_{r = -1}^{f}|a_r| \ \text{ and } \sum^{h}_{r = -1}|B_r| = 1+\sum^{h}_{r = -1}|b_r|.
    \end{align}
    Since both $G(\hat{\calA},\hat{\calB})$ is non-crossing, and the intersections $A_s \cap B_j$ and $A_t\cap B_j$ are nonempty, we deduce from \eqref{eq: partial-sums} and Lemma \ref{lem: block-intersection}/\eqref{itm: blocks-intersection} that
    \begin{align*}
        \sum_{r = -1}^{j-1}|b_r| < \sum^{s}_{r = -1}|a_r| \leq \sum^{t-1}_{r = -1}|a_r| < \sum_{r = -1}^{j}|b_r|.
    \end{align*}
 It follows from the expression \eqref{eq: typeA} that
 $\displaystyle \sum^{s}_{r = -1}|a_r| = \sum^{g+s-1}_{r= 0}m_r$. Hence, we obtain \eqref{eq:bbounds} by letting $k=g+s-1$.

   Now consider $t \leq 0.$ Since $0 \not\in B_j$ and $A_0$ is either $\{0\}$ or $\emptyset$, it follows that $t = -1.$ By the non-crossing property of $G(\hat{\calA},\hat{\calB})$, the vertices $B_{-1}$, $B_0, \dots, B_{j-1}$ have degree one and are adjacent to $A_{-1}$, and the block $A_{0} = \emptyset.$ Thus, $B_{-1}\cup B_0 \cup \cdots \cup B_{j-1}$ is a proper subset of $A_{-1}$. As $\type(\calA) \in \Omega_\bbm,$ we must have $m_0 > 0$ and $\type(\calA) = (m_0, 0, m_1, \dots, m_\ell).$ 
   Hence,
    \[|b_{-1}| + |b_0| + \cdots + |b_{j-1}| < |a_{-1}| = m_0 < |b_{-1}| + |b_0| + \cdots + |b_{j}|.\]

    $(\Longleftarrow)$ Conversely, suppose that there exists a nonnegative integer $k \leq \ell$ such that $\type(\calB) = (b_{-1}, \dots, b_p)$ satisfies \eqref{eq:bbounds}.
   
    Suppose $g \le k.$ Then it follows from the expression \eqref{eq: typeA} that
 $\displaystyle \sum^{k-g+1}_{r = -1}|a_r| = \sum^{k}_{r= 0}m_r$.
 Using this and \eqref{eq: partial-sums}, we rewrite \eqref{eq:bbounds} as
 \[ \sum_{r=-1}^{j-1} |B_r| < \sum^{k-g+1}_{r = -1}|A_r| < \sum_{r=-1}^{j} |B_r|.  \]
 Applying the non-crossing property of $G(\hat{\calA},\hat{\calB})$ and Lemma \ref{lem: block-intersection}/\eqref{itm: some-blocks-intersection}, we obtain that $B_j$ is adjacent to $A_{k-g + 1}$ and $A_{i}$ for some $i > k-g + 1.$ Thus, in this case, the vertex $B_j$ has degree at least two. 

   On the other hand, suppose $k < g.$ Combining the left inequality in \eqref{eq:bbounds}, \eqref{eq:a-1bound} and \eqref{eq: partial-sums}, we get that
   \[ |A_{-1}| = |a_{-1}| + 1  \ge m_0 + m_1 + \cdots + m_{g-1} +1 \ge 1+ \sum_{r=0}^{k} m_r > \sum_{r=1}^{j-1} |B_r|. \]
   Then the non-crossing property of $G(\hat{\calA},\hat{\calB})$ implies that $B_j$ is adjacent to $A_{-1}.$ Moreover, it follows from Lemma \ref{lem: m0-first-three-blocks} that the right vertex $B_{j}$ of $G(\hat{\calA},\hat{\calB})$ must be adjacent to a vertex $A_s$ for some positive integer $s \in [q]$. Hence, the vertex $B_j$ also has degree at least two in this case. This completes the proof.
\end{proof}

\begin{proof}[Proof of Proposition \ref{prop: type-characterization}] Let $\calB$ be a skewed binary partition of $[0,n]$ and suppose that $\type(\calB) = (b_{-1}, b_0, \dots, b_p)$.
    
    $(\Longrightarrow)$ Suppose that $\calB \in \bwp(\bbm)$. 
    Then $\calB$ is a contraction of $\calA = (A_{-1}, \dots, A_q)$ for some $\calA$ such that $\type(\calA) = (a_{-1}, a_0, \dots, a_q) \in \Omega_\bbm.$ Note that $A_{0}$ is either $\{0\}$ or $\emptyset$. 

    We first show that $(b_{-1}, \dots, b_p)$ satisfies condition \ref{itm: firsttwoblocks}.
    Recall from the definition of skewed binary compositions that we always have $(b_{-1}, b_0) \in  \ (\N \times \N^\circ) \cup (\P \times \{0\})$. 
    Suppose $b_0 \neq 0.$ We will show that \[ |b_{-1}| + |b_{0}| >m_0 \text{ or } |b_{-1}| + |b_{0}| =0.\] 
    Clearly, the above is true if $m_0 = 0.$ Hence, we may assume $m_0 > 0.$ It then follows from Definition \ref{def: vertex-type} that $0 \in A_{-1}$ or $|a_{-1}| > m_0,$ where either case implies that $|A_{-1}| \ge m_0+1$. 
    Since $b_0 \neq 0,$ we have $b_0 \in \N^\circ.$ Hence, $b_0 \in \P^\circ$ or $b_0 = 0^\circ$.  If $b_0 \in \P^\circ$, then
     $B_0$ is a (nonempty) homogeneous block.
     By Theorem \ref{thm: graph-characterization}/\ref{itm: right-homog1}, the right homogeneous vertex $B_0$ of $G(\hat{\calA},\hat{\calB})$ must be adjacent to a vertex other than $A_{-1}.$ The non-crossing property of $G(\hat{\calA},\hat{\calB})$ then implies that $A_{-1}$ can only be adjacent to $B_{-1}$ or $B_{0}$ (or both). Thus, $A_{-1}$ is a proper subset of $B_{-1}\cup B_0.$ Therefore, 
    \[ |b_{-1}| + |b_0|= |B_{-1}| + |B_0| -1 > |A_{-1}|-1 \ge m_0  \]
    as desired. 
    Now assume $b_0 =0^\circ$. If $b_{-1}=0$, then we have $|b_{-1}|+|b_0|=0.$ Hence, we assume $b_{-1}>0$. Thus, we have that both $B_0$ and $B_{-1}$ are non-homogeneous vertices of $G(\hat{\calA},\hat{\calB})$. Since, by Theorem \ref{thm: graph-characterization}/\ref{itm: left-nonhomog1}, the left non-homogeneous vertex $A_{-1}$ of $G(\hat{\calA},\hat{\calB})$ cannot be adjacent to both non-homogeneous vertices $B_{-1}$ and $B_0,$ it follows from the non-crossing property of $G(\hat{\calA},\hat{\calB})$ that $A_{-1}$ is not adjacent to $B_0=\{0\}$. Hence,  $0 \not\in A_{-1}$ and $0 \in A_0.$ Therefore, Because $\type(\calA) \in \Omega_\bbm$, it follows that $|a_{-1}| > m_0.$ Moreover, in $G(\hat{\calA},\hat{\calB})$, the vertices $A_0$ and $B_0$ are adjacent. By the non-crossing property of $G(\hat{\calA},\hat{\calB})$, the vertex $A_{-1}$ can only be adjacent to $B_{-1}$ or $B_{0}$ (or both). Thus, $A_{-1} \subseteq B_{-1}\cup B_0$. This implies that $|b_{-1}| + |b_0| \geq |a_{-1}| > m_0,$ as desired.

    Conversely, suppose that $b_0 = 0$. Then $b_{-1} \in \P$. Thus, $B_0 = \emptyset$, and $B_{-1}$ contains $0$ and at least one positive integer. By Theorem \ref{thm: graph-characterization}/\ref{itm: right-nonhomog1},  the right non-homogeneous $B_{-1}$ of $G(\hat{\calA},\hat{\calB})$ is only connected to one non-homogeneous left vertex. This implies that $B_{-1}$ is a subset of a block $A_j$ for some $j \in [-1,q].$ Therefore, we must have that $A_0 \neq \{0\}$, since $A_0 = \{0\}$ would intersect but would not contain $B_{-1}$. Because $\type(\calA) \in \Omega_\bbm$, it follows that $m_0 > 0$, $A_0 = \emptyset,$ and $|a_{-1}| = m_0$. By the non-crossing property of $G(\hat{\calA},\hat{\calB})$ and Theorem \ref{thm: graph-characterization}/\ref{itm: right-nonhomog1}, the vertex $B_{-1}$ is only adjacent to $A_{-1}.$ Hence, we have $B_{-1} \subseteq A_{-1},$ which then implies $0 < |b_{-1}|+ |b_0|  = |b_{-1}| \leq |a_{-1}| = m_0$ as desired. 
    
    Next, we show that $\type(\calB)$ satisfies condition \ref{itm: positiveblocks}. By the first part of Lemma \ref{lem: m0-first-three-blocks}, we have $m_0 < |b_{-1}| + |b_0| + |b_1|.$ Assume by way of contradiction that there exists a positive integer $i \leq \ell$ together with two consecutive positive integers $j, j+1$ satisfying inequality \eqref{eq:partialsumcondition}. Let $k_t = |b_{-1}| + \cdots + |b_{j+t-1}|$ for $t = 0, 1, 2$. The assumption implies that 
    \begin{equation}\label{eq: in-a-box}
        m_0 + \cdots + m_{i-1} \leq k_0 < k_1 < k_2 \leq m_0 + \cdots + m_i.
    \end{equation}
    Then, by Lemma \ref{lem: degree-two}, both vertices $B_j$ and $B_{j+1}$ of $G(\hat{\calA},\hat{\calB})$ must have degree one. Because every left vertex of $G(\hat{\calA},\hat{\calB})$ is non-homogeneous, it follows from Theorem \ref{thm: graph-characterization}/\ref{itm: right-homog1} that both $B_j$ and $B_{j+1}$ are non-homogeneous. Theorem \ref{thm: graph-characterization}/\ref{itm: left-nonhomog1}\ref{itm: right-nonhomog1} then imply that the vertex $B_{j}$ is adjacent to $A_s$ for a unique $s$ and $B_{j+1}$ is adjacent to $A_t$ for a unique $t$, where $s \neq t$.
    Thus, $B_j \subseteq A_s$ and $B_{j+1} \subseteq A_t$. Applying the second part of Lemma \ref{lem: m0-first-three-blocks} and the non-crossing property of $G(\hat{\calA},\hat{\calB})$, we obtain that $1 \le s < t$. Then, it follows from Lemma \ref{lem: block-intersection}/\eqref{itm: blocks-inclusion}\eqref{itm: blocks-revinclusion} and \eqref{eq: partial-sums} that
    \[k_1 = \sum_{r = -1}^{j}|b_r| \le \sum^{s}_{r = -1}|a_r| \leq \sum^{t-1}_{r = -1}|a_r| \le \sum_{r = -1}^{j}|b_r|  = k_1.\]
    One sees that all the equalities in the above inequality hold. Hence, $\displaystyle k_1 = \sum^{s}_{r = -1}|a_r|$. However, it follows from the expression \eqref{eq: typeA} that
    $\displaystyle k_1= \sum^{s}_{r = -1}|a_r| = \sum^{g+s-1}_{r= 0}m_r$,  contradicting the assumption \eqref{eq: in-a-box}. This shows that $\type(\calB)$ satisfies condition \ref{itm: positiveblocks}.

    We now show that $\type(\calB)$ satisfies condition \ref{itm: blocktypes}. Let $j \in [p]$ be a positive integer. We define $k_t = |b_{-1}| + \cdots + |b_{j+t-1}|$ for $t = 0, 1$ as before. If there exists a positive integer $i$ satisfying inequality \eqref{eq:partialsumcondition}, i.e., 
    \begin{equation}\label{eq:partialsumcondition0}
    m_0 + \cdots + m_{i-1} \leq k_0 < k_1 \leq m_0 \cdots + m_i. 
    \end{equation}
then similar to the proof in the last part, we can show that $B_j$ is non-homogeneous. Therefore, $b_j \in \P.$ Now suppose that there is no positive integers $i$ satisfying inequality \eqref{eq:partialsumcondition} or \eqref{eq:partialsumcondition0}. By Lemma \ref{lem: m0-first-three-blocks}, we have $m_0 < |b_{-1}| + |b_0| + |b_1| \leq k_1.$ We also clearly have that $k_1 \le n = m_0 + m_1 + \cdots + m_\ell.$ Let $i$ be the unique integer in $[\ell]$ such that $m_0 + \cdots + m_{i-1} < k_1 \leq m_0 \cdots + m_i.$ Then we must have that 
    \[k_0 < m_0 + \cdots + m_{i-1} < k_1.\]
    It follows from Lemma \ref{lem: degree-two} that the vertex $B_j$ of $G(\hat{\calA},\hat{\calB})$ has degree at least two. Thus, by Theorem \ref{thm: graph-characterization}/\ref{itm: right-nonhomog1}, the block $B_j$ is homogeneous. Hence, $b_j \in \P^\star.$ 

    $(\Longleftarrow)$ Conversely, suppose that $\calB=(B_{-1}, B_0, B_1, \dots, B_p)$ is a skewed binary partition such that $\type(\calB) = (b_{-1}, \dots, b_p)$ satisfies conditions \ref{itm: firsttwoblocks} - \ref{itm: blocktypes}. By symmetry, we may assume without loss of generality that $\calB$ is standard. To see that $\calB \in \bwp(\bbm)$, we will construct $\calA$ such that $\calB \leq \calA$ and $\type(\calA) \in \Omega_\bbm.$ We consider two cases: Case 1: $m_0 < |b_{-1}| +|b_0| \le n$, and Case 2: $0\le |b_{-1}|+|b_0| \le m_0.$

    \noindent \textbf{Case 1:} Suppose  $m_0 < |b_{-1}| +|b_0| \le n$. Let $g$ be the unique positive integer in which 
\begin{equation}\label{eq:gbound}
	m_0+m_1 + \cdots + m_{g-1} \le |b_{-1}| + |b_0| < m_0 + m_1 + \cdots + m_g 
\end{equation}
and define $\calA=(A_{-1}, A_0, A_1, \dots, A_q)$ to be the standard skewed binary partition such that
\[\type(\calA) = (a_{-1}, \dots, a_q) = (|b_{-1}|+|b_0|, 0^\circ, m_0+m_1 + \cdots + m_g - |b_{-1}|-|b_0|, m_{g+1}, \dots, m_\ell),\] where $q = \ell - g+1.$ One sees that $\type(\calA)=(a_1, \dots, a_q)$ satisfies the following conditions:
\begin{align}
	\sum^{i}_{r = 0}m_r = \sum^{i-g+1}_{r = -1}|a_r| , &\quad \text{ if $i \in [g, \ell]$} \label{eq:m2a} \\
	\sum^{i}_{r = 0}m_r \le |b_{-1}| + |b_0| = |a_{-1}|+|a_0| = \sum^{i-g+1}_{r = -1}|a_r| , &\quad \text{if $i=g-1$}.
	\label{eq:m2a-g} 
\end{align}
    To see that $\calB \leq \calA,$ we will show that $G(\hat{\calA},\hat{\calB})$ satisfies all of the conditions in Theorem \ref{thm: graph-characterization}.
    First, by condition \ref{itm: firsttwoblocks}, we have $b_0 \neq 0.$ Hence, $(b_{-1}, b_0) \in \N \times \N^\circ.$ Thus, $B_0$ contains $0$ and is nonempty, but $B_{-1}$ could be empty. Because $a_0 = 0^0,$ we have $A_0 = \{0\}$. By Lemma \ref{lem: standard-crossing}, we have that $G(\hat{\calA},\hat{\calB})$ is non-crossing if and only if the edge $\{A_0, B_0\}$ connecting $A_0$ and $B_0$ does not intersect with any other edges in $G(\hat{\calA}, \hat{\calB})$. However, since $|a_{-1}| + |a_0| = |b_{-1}| + |b_0|$, we have that $A_{-1} \cup A_0 = B_{-1} \cup B_0$, which implies the desired condition on the edge $\{A_0, B_0\}$. Thus, condition \ref{itm: non-crossing} of Theorem \ref{thm: graph-characterization} is satisfied.

    We now verify conditions \ref{itm: right-nonhomog1} and \ref{itm: right-homog1} of Theorem \ref{thm: graph-characterization} for right vertices of $G(\hat{\calA},\hat{\calB})$.
    \begin{itemize}
	    \item If $B_{-1}$ is nonempty, then it is a non-homogeneous vertex. Since $A_0=\{0\}$ clearly has no intersection with $B_{-1}$, we have that $B_{-1}$ is only adjacent to $A_{-1}$. Therefore, $\deg^\star(B_{-1})=0$ and $\deg^\vee(B_{-1})=1.$

	    \item Since $A_0 =\{0\} \subseteq B_0,$ we have that $B_{0}$ is adjacent to the non-homogeneous vertex $A_{0}$. The only other right vertex $B_0$ might be adjacent to is $A_{-1}$. Clearly, $A_{-1}$ is adjacent to $B_0$ if and only if $B_0$ contains an element other than $0$, which is equivalent to that $B_0$ is homogeneous. One sees that  conditions \ref{itm: right-nonhomog1} and \ref{itm: right-homog1} of Theorem \ref{thm: graph-characterization} are satisfied for $B_0$.

	    \item Suppose that $j \in [p]$ and $B_j$ is non-homogeneous. Then, it follows from \eqref{eq:gbound}, \eqref{eq:m2a}, \eqref{eq:m2a-g} and conditions \ref{itm: positiveblocks}--\ref{itm: blocktypes} that there exists a positive integer $i$ in $[g,\ell]$ satisfying
        \begin{align}\label{eq: b-j}
		\sum^{i-1}_{r = 0}m_r \le \sum^{i-g}_{r = -1}|a_r|  \leq |b_{-1}| + \cdots + |b_{j-1}| < |b_{-1}| + \cdots + |b_{j}| \leq  \sum^{i}_{r = 0}m_r = \sum^{i-g+1}_{r = -1}|a_r|.
        \end{align}
	Thus, by Lemma \ref{lem: block-intersection}/\eqref{itm: blocks-revinclusion}, we have that $B_j \subseteq A_{i-g+1}$. Hence, the right non-homogeneous vertex $B_j$ of $G(\hat{\calA},\hat{\calB})$ satisfies $\deg^\vee(B_j) = 1$ and $\deg(B_j) = 1$, condition \ref{itm: right-nonhomog1} of Theorem \ref{thm: graph-characterization}. 

\item
	Now suppose that $j \in [p]$ and $B_j$ is homogeneous. It follows from conditions \ref{itm: positiveblocks}--\ref{itm: blocktypes} that there exists a positive integer $i \in [\ell]$ such that
	 \[ |b_{-1}| + \cdots + |b_{j-1}| <  \sum^{i-1}_{r = 0}m_r < |b_{-1}| + \cdots + |b_{j}| \leq \sum^{i}_{r = 0}m_r.\]
	 Next, by \eqref{eq:gbound}, one sees that $i \ge g+1$ or $i -1 \ge g.$ Therefore, using \eqref{eq:m2a}, we obtain
\begin{align}
            |b_{-1}| + \cdots + |b_{j-1}| <  \sum^{i-1}_{r = 0}m_r = \sum_{r = -1}^{i-g}|a_r| < |b_{-1}| + \cdots + |b_{j}|.
        \end{align}
It follows from Lemma \ref{lem: block-intersection}/\eqref{itm: blocks-intersection} that the right homogeneous vertex $B_j$ of $G(\hat{\calA},\hat{\calB})$ are adjacent to the vertices $A_{i-g}$ and $A_{i-g+1}$. Thus, we have $\deg(B_j) \geq 2.$ Hence, the vertex $B_j$ of $G(\hat{\calA},\hat{\calB})$ satisfies condition \ref{itm: right-homog1} of Theorem \ref{thm: graph-characterization}.  
    \end{itemize}
    Therefore, conditions \ref{itm: right-nonhomog1} and \ref{itm: right-homog1} of Theorem \ref{thm: graph-characterization} are proved.

    Because every block of $\calA$ is non-homogeneous, $G(\hat{\calA},\hat{\calB})$ automatically satisfies condition \ref{itm: left-homog1} of Theorem \ref{thm: graph-characterization}. We now verify condition \ref{itm: left-nonhomog1} of Theorem \ref{thm: graph-characterization}. 
    \begin{itemize}
	    \item We have established that $A_{-1}$ is adjacent to $B_0$ if and only if $B_0$ is homogeneous. Hence, the only non-homogeneous vertex $A_{-1}$ can be adjacent to is $B_{-1}.$ Thus, $\deg^\vee(A_{-1}) \le 1.$ 
        \item Since $A_0 = \{0\} \subseteq B_0$, we have $\deg^\vee(A_{0}) \le \deg(A_{0})=1$. 
	   \item Let $s \in [q]$. Suppose that  the left non-homogeneous vertex $A_s$ is adjacent to a non-homogeneous vertex $B_j$ for some $-1 \le j \le p$. Because $A_{-1} \cup A_0 = B_{-1} \cup B_0$, one has $j \geq 1.$ Since we have already shown that $G(\hat{\calA},\hat{\calB})$ satisfies condition \ref{itm: right-nonhomog1} of Theorem \ref{thm: graph-characterization}, it follows that $B_j \subseteq A_s$. Thus, by Lemma \ref{lem: block-intersection}/\eqref{itm: blocks-revinclusion}, we have that $j$ satisfies \eqref{eq: b-j} with $s = i-g+1.$ Moreover, by conditions \ref{itm: positiveblocks}--\ref{itm: blocktypes}, such a $j$ is the unique positive integer satisfying \eqref{eq: b-j}. That is, $B_j$ is the only non-homogeneous vertex adjacent to $A_s$. This shows $\deg^\vee(A_s) \le 1.$
    \end{itemize}    
    Therefore, condition \ref{itm: left-nonhomog1} of Theorem \ref{thm: graph-characterization} is satisfied. This completes our proof for case 1.

    \noindent \textbf{Case 2:} Suppose $0\le |b_{-1}|+|b_0| \le m_0$. Define $\calA=(A_{-1}, A_0, A_1, \dots, A_q)$ to be the standard skewed binary partition such that
    \[\type(\calA) = (a_{-1}, a_0, a_1, \dots, a_q) =
    \begin{cases}
        (m_0, 0, m_1, \dots, m_\ell) & \text{ if } m_0 \neq 0\\
        (m_0, 0^\circ, m_1, \dots, m_\ell) & \text{ if } m_0 = 0
    \end{cases}.\]
    For convenience, we define 
    \[ \hat{b}_0 := |b_{-1}|+|b_0|, \text{ and } \hat{b}_i := b_i \text{ for all $1 \le i \le p$.} \]
    If $\hat{b}_0=|b_{-1}|+|b_0|=0,$ one sees that $(b_{-1},b_0)=(0, 0^\circ)$, and $0 < \hat{b}_0 \le m_0$, by condition \ref{itm: firsttwoblocks}, we have $b_0 = 0$ and thus $(b_{-1},b_0) \in \P \times \{0\}.$ In either case, we have that one of $B_{-1}$ and $B_0$ is the empty set and the other one is the non-homogeneous block $[0, \hat{b}_0]$. Therefore, $\hat{\calB}$ is the standard binary partition $(\hat{B}_0, \hat{B}_1, \dots, \hat{B}_p)$, in which 
	    \[\hat{B}_0 = [0, \hat{b}_0] \text{ and } \hat{B}_i = B_i \text{ for $1 \le i \le p$}.\]
	    Similarly, we see that $\hat{\calA}$ is the standard binary partition $(\hat{A}_0, \hat{A}_1, \dots, \hat{A}_q)$, in which 
	    \[\hat{A}_0 = [0, m_0] \text{ and } \hat{A}_i = A_i \text{ for $1 \le i \le q$}.\]
	    With these characterization of $\hat{A}$ and $\hat{B}$ we will be able to show that $G(\hat{\calA},\hat{\calB})$ satisfies all of the conditions in Theorem \ref{thm: graph-characterization}. We will omit the details of the proof for this case, as the arguments are mostly similar to what we did for Case 1.  
\end{proof}

\subsection{Consequences} In this part, we will discuss a few corollaries to the results presented in the last two subsections.

\subsubsection{Inequality Description} For any fan $\Sigma$, we denote by $\Sigma^1$ the collection of one-dimensional cones in $\Sigma$.
In order to obtain an inequality description for parking function polytopes, we first give a characterization of the set $\Sigma^1(\pf(\bbm,\bbd))$ of one-dimensional cones in the normal fan $\Sigma(\pf(\bbm,\bbd))$. For any $I \subseteq [n]$, we let $\bbe_I = \sum_{i \in I} \bbe_i.$ Recall $\cone(\bbr)$ denotes the one-dimensional cone generated by the vector $\bbr$.

\begin{cor}\label{cor: one-dim-comes}
    Let $n \geq 2$ be a positive integer. Suppose that $(\bbm,\bbd)$ is an MD pair where $\bbm = (m_0, \dots, m_\ell)$ is a multiplicity vector of magnitude $n.$ 
    Then $\Sigma^1(\pf(\bbm,\bbd))$ is contained in the following set of one dimensional cones
    \begin{equation}
	    \label{eq:sup1dimcones}
	    \{ - \cone(\bbe_i) \mid i \in [n]\} \cup \{ \cone(\bbe_I) \mid \emptyset \neq I \subseteq [n]\}
    \end{equation}
    Moreover, we have the following precise description for $\Sigma^1(\pf(\bbm,\bbd))$.
    \begin{enumerate}
	    \item If $\ell= 1$ and $m_1 = n,$ then 
		    \[ \Sigma^1(\pf(\bbm,\bbd)) =  \{ - \cone(\bbe_i) \mid i \in [n]\} \cup \{ \cone(\bbe_I) \mid I \subseteq [n], |I| =1 \}.   \]
	    \item If $\ell = 1$ and $m_1 = 1$, then 
		    \[ \Sigma^1(\pf(\bbm,\bbd)) =  \{ - \cone(\bbe_i) \mid i \in [n]\} \cup \{ \cone(\bbe_I) \mid I \subseteq [n], |I| =n \}.   \]
        \item If $\ell=1$ and $2 \leq m_1 \leq n-1$,  then 
		\[ \Sigma^1(\pf(\bbm,\bbd)) =  \{ - \cone(\bbe_i) \mid i \in [n]\} \cup \{ \cone(\bbe_I) \mid I \subseteq [n], |I| =1 \text{ or } n \}.   \]
		\item If $\ell\geq 2$, then 
			\begin{align*}
			\Sigma^1(\pf(\bbm,\bbd)) =&  \quad  \{ - \cone(\bbe_i) \mid i \in [n]\} \cup \{ \cone(\bbe_I) \mid I \subseteq [n], |I| =1 \text{ or } n \} \\ 
		& \cup \{ \cone(\bbe_I) \mid I \subseteq [n], m_\ell +1 \le |I| \le n-m_0-1 \}. 
	\end{align*}
	\end{enumerate}
    \end{cor}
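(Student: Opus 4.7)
The plan is to invoke Corollary \ref{cor: combinatorialType}, which identifies $\Sigma(\pf(\bbm,\bbd))$ with $\bwp(\bbm)$ via $\calB \mapsto \ssigma_\calB$. Hence the one-dimensional cones in $\Sigma(\pf(\bbm,\bbd))$ correspond exactly to those $\calB \in \bwp(\bbm)$ with $\dim(\ssigma_\calB) = 1$. By the dimension formula in Proposition \ref{prop: dim-preordcone}, if $\type(\calB) = (b_{-1}, b_0, b_1, \dots, b_k)$ then $\dim(\ssigma_\calB) = 1$ forces one of two shapes of type vector: (A) exactly one entry lies in $\P$, that entry equals $1$, and no entry lies in $\P_{\ge 2}^\star$; or (B) no entry lies in $\P$ and exactly one lies in $\P_{\ge 2}^\star$.

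First I would unwind these two options using Definition \ref{defn:bwp} together with the constraints $\sum_i |b_i| = n$ and $n \ge 2$. A direct enumeration will show that $\calB$ must take exactly one of three forms: (i) $\calB = (\{i\}, [0,n] \setminus \{i\})$ with the second block homogeneous, of type $(1, (n-1)^\circ)$; (ii) $\calB = (\emptyset, [0,n] \setminus \{i\}, \{i\})$ with the middle block homogeneous, of type $(0, (n-1)^\circ, 1)$; (iii) $\calB = (\emptyset, [0,n]\setminus I, I)$ with $I \subseteq [n]$ homogeneous of size $s\ge 2$, of type $(0, (n-s)^\circ, s^\star)$. Reading off each sliced preorder cone directly from Definition \ref{defn:preordcone} yields $\ssigma_\calB = -\cone(\bbe_i)$, $\cone(\bbe_i)$, and $\cone(\bbe_I)$ respectively, which establishes the containment of $\Sigma^1(\pf(\bbm,\bbd))$ in the superset \eqref{eq:sup1dimcones}.

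The main work is then to determine, via Proposition \ref{prop: type-characterization}, which of the shapes above actually belong to $\bwp(\bbm)$. Type (i) passes all three conditions of Proposition \ref{prop: type-characterization} unconditionally, since $m_0 < n$ by Remark \ref{rem:mzero1}, so every $-\cone(\bbe_i)$ occurs. Type (ii) passes condition \ref{itm: firsttwoblocks} if and only if $m_0 < n-1$, equivalently $\bbm \neq (n-1,1)$, after which conditions \ref{itm: positiveblocks} and \ref{itm: blocktypes} are automatic. Type (iii) passes condition \ref{itm: firsttwoblocks} if and only if $s = n$ or $s \le n - m_0 - 1$; once condition \ref{itm: firsttwoblocks} holds, condition \ref{itm: blocktypes} with $b_1 \in \P_{\ge 2}^\star$ requires the nonexistence of any $i \in [\ell]$ realizing the partial-sum inequality for $j=1$, which, using $|b_{-1}|+|b_0|+|b_1| = n$, reduces to $s \ge m_\ell + 1$.

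To conclude, I would merge these three validity criteria into the four cases of the corollary. When $\bbm = (0,n)$ the criterion $s \ge m_\ell + 1 = n+1$ in (iii) is never met, so only (i) and (ii) survive; when $\bbm = (n-1,1)$ type (ii) fails and (iii) collapses to $s = n$ alone; when $\ell = 1$ with $2 \le m_1 \le n - 1$, types (i), (ii), and $s = n$ in (iii) all survive while the intermediate range $m_\ell + 1 \le s \le n - m_0 - 1$ for (iii) is empty (since $m_1 + 1 > m_1 - 1$); and when $\ell \ge 2$ all three families contribute, with the intermediate range producing the nontrivial middle-sized $\cone(\bbe_I)$ in the corollary statement. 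The chief obstacle throughout is the careful bookkeeping of Proposition \ref{prop: type-characterization}'s partial-sum conditions against the identity $m_0 + m_1 + \cdots + m_\ell = n$ and the constraint $m_i \ge 1$ for $i \ge 1$, rather than any conceptual difficulty.
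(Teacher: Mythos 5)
Your proposal is correct and follows essentially the same route as the paper's own proof: apply Corollary \ref{cor: combinatorialType} and the dimension formula of Proposition \ref{prop: dim-preordcone} to pin down the possible type vectors $(1,(n-1)^\circ)$, $(0,(n-1)^\circ,1)$, and $(0,(n-s)^\circ,s^\star)$, then filter them through the conditions of Proposition \ref{prop: type-characterization} and translate the surviving types into the cones $-\cone(\bbe_i)$, $\cone(\bbe_i)$, $\cone(\bbe_I)$. Your case merge (including the observation that the $s=n$ instance of shape (iii) fails precisely when $\bbm=(0,n)$) matches the paper's final classification.
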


\begin{proof}
    By Proposition \ref{prop: dim-preordcone}, we have that the one dimensional cones in $\Sigma(\pf(\bbm,\bbd))$ correspond to the skewed binary partitions $\calB \in \bwp(\bbm)$ such that $\type(\calB) = (b_{-1}, b_0, \dots, b_k)$ satisfies
 \begin{equation*}
        1 = \left( \sum_{b_i \in \P} b_i \right) +  \#(b_i \in \P_{\ge 2}^\star).
    \end{equation*}
This implies that $\type(\calB)$ lies in the set
\begin{align}\label{eq: possible-type-one-dim-cones}
    \{(1, (n-1)^\circ), (0, (n-1)^\circ, 1)\} 
     \cup \{ (0, (n-i)^\circ, i^\star) \mid 2 \le i \le n\}.
\end{align}
By checking when each element in the above set satisfies the three conditions given in Proposition \ref{prop: type-characterization}, we obtain that a cone $\sigma$ belongs to $\Sigma^1(\pf(\bbm,\bbd))$ if and only if $\sigma = \ssigma_\calB$ for some skewed binary partition $\calB$ of $[0,n]$ such that $\type(\calB)$ satisfies the following conditions: 
\begin{enumerate}
       \item If $\ell= 1$ and $m_1 = n,$ then 
        $\type(\calB) \in \{(1,(n-1)^\circ), (0,(n-1)^\circ,1)\}.$
        \item If $\ell = 1$ and $m_1 = 1$ then 
        $\type(\calB) \in \{(1, (n-1)^\circ), (0,0^\circ, n^\star)\}.$
        \item If $\ell=1$ and $2 \leq m_1 \leq n-1$,  then $\type(\calB) \in \{(1,(n-1)^\circ), (0,(n-1)^\circ,1), (0,0^\circ, n^\star)\}.$
        \item If $\ell\geq 2$, then $\type(\calB) \in X\cup Y$ where
        \begin{align*}
            X &:= \{(1, (n-1)^\circ), (0,(n-1)^\circ, 1), (0,0^\circ, n^\star)\}\\
	    Y&:= \{ (0, (n-i)^\circ, i^\star) \mid m_\ell +1 \le i \le n-m_0-1\}. 
        \end{align*}
\end{enumerate}
Finally, we check that 
\begin{itemize}
	\item $\type(\calB) = (1, (n-1)^\circ)$ if and only if $\ssigma_\calB = -\cone(\bbe_i)$ for some $i \in [n]$;
	\item $\type(\calB) = (0,(n-1)^\circ,1)$ if and only if $\ssigma_\calB = \cone(\bbe_i)$ for some $i \in [n]$;
	\item and for each $2 \le i \le n$, $\type(\calB) =  (0, (n-i)^\circ, i^\star)$ if and only if $\ssigma_\calB = \cone(\bbe_I)$ for some $i$-subset $I$ of $[n]$.
\end{itemize}
The conclusions of this corollary follow.  
\end{proof}

Recall the definition of $\bbu$-extreme points in Definition \ref{defn:extreme}. It follows from Theorem \ref{thm: fulldimcones} and \eqref{eq:extreme=vB} that $\calX(\bbu)$ is the set of vertices of $\pf(\bbu)$.  
Applying Lemma \ref{lem:det-ineq} with this fact and Corollary \ref{cor: one-dim-comes}, we are able to give the following inequality description for parking function polytopes.

\begin{prop}\label{prop: ineqDescription}
	Suppose that $\bbu = (u_1, \dots, u_n)$ is a non-decreasing vector in $\R^n_{\geq 0} \setminus \{ \0\}$. 
    Then the $\bbu$-parking function polytope $\pf(\bbu)$ has the following inequality description: 
\[ \pf(\bbu) = \left\{ 
	\bbx \in \R^n \ \left| \ \begin{matrix}
		\sum_{i \in I}x_i \leq \sum^{|I|-1}_{i = 0} u_{n-i}, & \text{ for every $\emptyset \neq I \subseteq [n]$} \\[2mm]
		x_i  \ge 0, & \text{ for each $i \in [n]$}
	\end{matrix}
\right.
	\right\}
\]
Moreover, for each $i \in [n]$, the inequality $x_i \ge 0$ always defines a facet of $\pf(\bbu)$. 
For each nonempty subset $I$ of $[n],$ the inequality $\sum_{i \in I}x_i \leq \sum^{|I|-1}_{i = 0} u_{n-i}$ defines a facet of $\pf(\bbu)$ if and only if the following conditions on $|I|$ are satisfied:
\begin{enumerate}
\item \label{itm:cube} 
	If $\bbu=(d,d,\dots, d)$ for some positive $d$, then $|I| =1$.
\item \label{itm:simplex} 
	If $\bbu=(0,\dots,0, d)$ for some positive $d$, then $|I| =n$.
\item \label{itm:hypersimplex} 
	If $\bbu$ is of the form $(0, \dots, 0, d, \dots, d)$ for some positive $d$ with at least one $0$ and at least two $d$'s, then $|I|=1$ or $n$.
\item \label{itm:lge2} 
	If $\bbu$ contains at least two distinct positive entries and let $\bbm=(m_0, m_1, \dots, m_\ell)$ be its multiplicity vector, then $|I| \in \{1, n\} \cup [m_\ell +1, n-m_0-1]$.
\end{enumerate}
\end{prop}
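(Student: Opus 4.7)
The strategy is to apply Lemma \ref{lem:det-ineq} to $P = \pf(\bbu)$, using the superset of one-dimensional cones supplied by Corollary \ref{cor: one-dim-comes}. Since $\pf(\bbu)$ is full-dimensional (as noted in the introduction) and, by Theorem \ref{thm: fulldimcones} together with \eqref{eq:extreme=vB}, its vertex set equals $\calX(\bbu)$, the hypotheses of Lemma \ref{lem:det-ineq} are satisfied once we supply a finite collection of one-dimensional cones containing $\Sigma^1(\pf(\bbu))$; the set displayed in \eqref{eq:sup1dimcones}, namely $\{ -\cone(\bbe_i) \mid i \in [n]\} \cup \{ \cone(\bbe_I) \mid \emptyset \neq I \subseteq [n]\}$, will serve.

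For each of these rays I then compute the maximum of the associated linear functional over $\calX(\bbu)$. For the ray $-\cone(\bbe_i)$, generated by $-\bbe_i$, the maximum is $0$, since every $\bbu$-extreme point has nonnegative coordinates; this produces the inequality $x_i \geq 0$. For the ray $\cone(\bbe_I)$ with generator $\bbe_I$, I need $\max_{\bbv \in \calX(\bbu)} \sum_{i \in I} v_i$. By Definition \ref{defn:extreme}, a $\bbu$-extreme point is an arbitrary permutation of $(0, \dots, 0, u_{k+1}, \dots, u_n)$ for some $0 \leq k \leq n$, so one may freely select which $|I|$ coordinates are placed at positions indexed by $I$. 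Taking $k = n-|I|$ and placing $u_{n-|I|+1}, \dots, u_n$ at positions in $I$ achieves the value $\sum_{i=0}^{|I|-1} u_{n-i}$; no extreme point yields more, because the $|I|$ largest entries of any such point are bounded above by $u_n, u_{n-1}, \dots, u_{n-|I|+1}$ by monotonicity of $\bbu$. This gives the inequality $\sum_{i \in I} x_i \leq \sum_{i=0}^{|I|-1} u_{n-i}$.

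For the facet assertions, Lemma \ref{lem:det-ineq} says that such an inequality is facet-defining exactly when its associated ray lies in $\Sigma^1(\pf(\bbu))$. The ray $-\cone(\bbe_i)$ belongs to $\Sigma^1(\pf(\bbu))$ in all four cases of Corollary \ref{cor: one-dim-comes}, so every inequality $x_i \geq 0$ is facet-defining. The facet-defining conditions on the sum inequalities then translate directly from the four cases of Corollary \ref{cor: one-dim-comes}: case (1) yields item \ref{itm:cube}, case (2) yields \ref{itm:simplex}, case (3) yields \ref{itm:hypersimplex}, and case (4) yields \ref{itm:lge2}. The only care required is to match the verbal description of $\bbu$ in each item of the proposition with the corresponding condition on its multiplicity vector $\bbm$ via Definition \ref{defn:multiplicity}, which is routine. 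I do not anticipate a substantial obstacle, as the substantive work—the explicit description of vertices and of the rays of $\Sigma(\pf(\bbu))$—has already been carried out in Theorem \ref{thm: fulldimcones} and Corollary \ref{cor: one-dim-comes}; the mild subtlety lies in the maximization over extreme points, which resolves cleanly once one exploits the freedom to permute coordinates.
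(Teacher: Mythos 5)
Your proposal is correct and follows exactly the route the paper takes: the paper also derives Proposition~\ref{prop: ineqDescription} by combining the identification of the vertex set with $\calX(\bbu)$ (from Theorem~\ref{thm: fulldimcones} and \eqref{eq:extreme=vB}), the superset of rays from Corollary~\ref{cor: one-dim-comes}, and Lemma~\ref{lem:det-ineq}. Your explicit computation of the maxima $\max_{\bbv \in \calX(\bbu)} (-\bbe_i)\cdot\bbv = 0$ and $\max_{\bbv\in\calX(\bbu)} \bbe_I\cdot\bbv = \sum_{i=0}^{|I|-1}u_{n-i}$ correctly fills in the details the paper leaves to the reader.
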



\begin{ex}\label{ex: ineqDescription}
    We present situations \eqref{itm:cube} and \eqref{itm:simplex} of Proposition \ref{prop: ineqDescription}.

    Suppose $\bbu=(d, \dots, d)$ for some positive $d$. Note that the multiplicity vector of $\bbu$ is $\bbm=(0,n)$. It is easy to see from the original definition of parking function polytopes that $\pf(d,d,\dots,d)$ is the cube $\square_n^d$ whose vertex set is $\{0, d\}^n$. Applying Proposition \ref{prop: ineqDescription}, we obtain that $\pf(\bbu)$ has an inequality description:
    \[ x_i \ge 0 \text{ and } x_i \le d \text{ for all } i \in [n],\]
    in which each inequality is facet-defining, as expected.  

    Suppose $\bbu=(0, \dots, 0, d)$ for some positive $d$. Note that the multiplicity vector of $\bbu$ is $\bbm=(n-1,1)$. It is easy to see from the original definition of parking function polytopes that $\pf(0,\dots, 0, d)$ is the simplex whose vertice set is $\{ \0, d \bbe_1, d \bbe_2, \dots, d \bbe_n\}.$ Applying Proposition \ref{prop: ineqDescription}, we obtain that $\pf(\bbu)$ has an inequality description:
    \[ x_i \ge 0 \text{ for all } i \in [n], \text{ and } x_1 + x_2 + \cdots + x_n \le d.\]
    in which each inequality is facet-defining, as expected.   
\end{ex}

\subsubsection{Simple and simplicial parking function polytopes}
It is apparent that every polytope of dimension at most 2 is simple. When a polytope has dimension greater than 2, it becomes nontrivial to verify its simplicity. Theorem \ref{thm: fulldimcones} allows us to determine exactly when $\pf(\bbm, \bbd)$ is a simple polytope.

\begin{cor}\label{cor: simple}
    Let $(\bbm, \bbd)$ be an MD pair where $\bbm = (m_0, m_1, \dots, m_\ell)$. Then $\pf(\bbm,\bbd)$ is simple if and only if either $\bbm = (0,n)$ or $(n-1, 1)$ or $m_1 = \cdots = m_{\ell-1} = 1$ for some $\ell \geq 2.$
\end{cor}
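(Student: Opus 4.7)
The plan is to translate simplicity of $\pf(\bbm,\bbd)$ into a purely combinatorial condition via its normal cones. Since $\pf(\bbm,\bbd)$ is full-dimensional in $\R^n$, it is simple precisely when every normal cone at a vertex is an $n$-dimensional simplicial cone. By Theorem \ref{thm: fulldimcones}, these normal cones are exactly the sliced preorder cones $\ssigma_\calB$ for skewed binary partitions $\calB$ with $\type(\calB)\in \Omega_\bbm$. Combining Remark \ref{rem: omega-full-dim-cones} (which guarantees $\preceq_\calB$ is already a poset with connected Hasse diagram) with Lemma \ref{lem: preordcone}/\ref{itm: simplicial}, each $\ssigma_\calB$ is $n$-dimensional simplicial if and only if that Hasse diagram is a tree. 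The task therefore reduces to determining the multiplicity vectors $\bbm$ for which every such Hasse diagram is a tree.

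I would first prove the following combinatorial criterion. If the non-empty blocks of $\calB$ are $C_0,C_1,\dots,C_t$ with sizes $s_i=|C_i|$, then the Hasse diagram is the union of complete bipartite graphs $K_{s_i,s_{i+1}}$ between consecutive blocks, so the tree equation $\sum_{i=0}^{t-1} s_i s_{i+1} = \sum_{i=0}^{t} s_i - 1$ is, after substituting $x_i=s_i-1\ge 0$, equivalent to
\[
\sum_{i=0}^{t-1} x_i x_{i+1} + \sum_{i=1}^{t-1} x_i = 0.
\]
Since every summand is non-negative, this forces $x_1=\cdots=x_{t-1}=0$. Thus the Hasse diagram is a tree exactly when all interior blocks $C_1,\dots,C_{t-1}$ are singletons (the cases $t\le 1$ being automatic, or giving the single condition $s_0=1$ or $s_1=1$ when $t=1$).

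I would then apply this criterion to each $\bba_i\in \Omega_\bbm$ as described in Definition \ref{def: vertex-type}. The element $\bba_0$ yields the non-empty block size sequence $(m_0+1,m_1,\dots,m_\ell)$ or $(1,m_1,\dots,m_\ell)$ depending on whether $m_0>0$ or $m_0=0$; its interior consists of $m_1,\dots,m_{\ell-1}$. For $1\le i<r$, $\bba_i$ gives block sizes $(m_0+i,\,1,\,M,\,m_{g+1},\dots,m_\ell)$ with $M=m_1+\cdots+m_g-i$, and as $i$ runs through the window $[m_1+\cdots+m_{g-1},\,m_1+\cdots+m_g-1]$ the number $M$ sweeps the entire range $[1,m_g]$. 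Demanding that the interior size vector $(1,M,m_{g+1},\dots,m_{\ell-1})$ be all ones for every admissible $i$ therefore forces $m_g=1$ together with $m_{g+1}=\cdots=m_{\ell-1}=1$ for each $g\in[1,\ell-1]$. Finally $\bba_r=(n,0^\circ)$ has $t=1$ with one block of size $1$, imposing no constraint.

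Collecting these constraints yields exactly the stated trichotomy. When $\ell=1$ only the $\bba_0$ case with $m_0>0$ produces a constraint, namely $m_1=1$, so simplicity holds iff $\bbm=(0,n)$ or $\bbm=(n-1,1)$. When $\ell\ge 2$, taking $g=1$ in the $\bba_i$ analysis already collapses every constraint into $m_1=m_2=\cdots=m_{\ell-1}=1$, and a direct check confirms that every $\bbm$ in this family yields a tree Hasse diagram for each $\bba\in\Omega_\bbm$. The main delicate step I expect is the $\bba_i$ case: one must be careful that $M$ truly attains its full range $[1,m_g]$ as $i$ varies, which is precisely what promotes a per-$i$ condition into the rigid requirement $m_g=1$.
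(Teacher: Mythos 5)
Your proposal is correct and follows the same overall framework as the paper (Theorem~\ref{thm: fulldimcones} together with Remark~\ref{rem: omega-full-dim-cones} and Lemma~\ref{lem: preordcone}/\ref{itm: simplicial}), but you fill in the details the paper leaves to the reader. The paper's proof verifies the ``only if'' direction by considering only $\bba_0$ (stating ``one checks'') and calls the ``if'' direction ``straightforward to verify.'' What you add is a clean combinatorial criterion: since the Hasse diagram of $([0,n],\preceq_\calB)$ for $\type(\calB)\in\Omega_\bbm$ is a chain of complete bipartite graphs $K_{s_i,s_{i+1}}$ on the nonempty blocks, the tree condition $\sum s_is_{i+1} = \sum s_i - 1$ becomes, via $x_i = s_i-1\ge 0$, the nonnegative identity $\sum x_ix_{i+1}+\sum_{i=1}^{t-1}x_i = 0$, forcing all interior blocks to be singletons (and $x_0x_1=0$ when $t=1$). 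This makes both directions nearly automatic and is a genuinely useful refinement of the paper's terse argument.

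One small imprecision: for $g=1$ the quantity $M = m_1-i$ only sweeps $[1, m_1-1]$ as $i$ runs over $\{1,\dots,m_1-1\}$, because $i=0$ is the $\bba_0$ case rather than one of the $\bba_i$ with $i\ge 1$. Hence the $\bba_i$ constraints alone force only $m_1\le 2$ rather than $m_1=1$, so the sentence claiming ``$M$ sweeps the entire range $[1,m_g]$'' is not literally correct when $g=1$. This does not affect your conclusion since the $\bba_0$ constraint already gives $m_1=\cdots=m_{\ell-1}=1$, but the wording should be adjusted so that the argument does not appear to rest on a false sweep.
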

\begin{proof}
    Recall that an $n$-dimensional polytope in $\R^n$ is simple if and only if, for every vertex of the polytope, the normal cone at the vertex is simplicial. Since $\{\bbv_\calA \mid \type(\calA) \in \Omega_\bbm\}$ is the set of vertices of $\pf(\bbm,\bbd)$ and $\ncone(\bbv_\calA,\pf(\bbm,\bbd)) = \ssigma_\calA$, it suffices by Lemma \ref{lem: preordcone}/\ref{itm: simplicial} to show that the preorder $\preceq_\calA$ defines a poset whose Hasse diagram is a tree for all $\calA$ satisfying $\type(\calA) \in \Omega_\bbm$ if and only if either $\bbm = (0,n)$ or $(n-1, 1)$ or $m_1 = \cdots = m_{\ell-1} = 1$ for some $\ell \geq 2.$ It is straightforward to verify that the "if" direction of the above statement is correct. Hence, we just need to verify the "only if" part. Suppose $\bbm=(m_0, m_1, \dots, m_\ell)$ satisfies (i) $\bbm= (m_0, m_1)$ where $m_0 \ge 1$ and $m_1 \ge 2$ or (ii) $\ell\ge2$ and $m_i \ge 2$ for some $1 \le i \le \ell-1.$ Let $\bba = \bba_0$ as defined in Definition \ref{def: vertex-type}. One checks that the Hasse diagram of the poset associated with any $\preceq_\calA$ such that $\type(\calA)=\bba$ is not a tree. This completes the proof.
\end{proof}

Note that every polytope of dimension at most two is simplicial. The next corollary provides a characterization of the simplicial parking function polytopes of dimension greater than two.

\begin{cor}\label{cor: simplicial-pf}
    Let $n \geq 3$ be an integer. Suppose that $\bbm$ is a multiplicity vector of magnitude $n$ and $(\bbm, \bbd)$ is an MD pair. Then, $\pf(\bbm, \bbd)$ is an $n$-dimensional simplicial polytope if and only if $\bbm = (n-1,1),$ i.e., $\pf(\bbm,\bbd)$ is a simplex.
\end{cor}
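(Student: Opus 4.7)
The \emph{if} direction is immediate from Definition~\ref{defn:extreme}: when $\bbm = (n-1, 1)$, the vector $\bbu$ has a unique positive entry $d$, so the set $\calX(\bbu)$ of $\bbu$-extreme points is exactly $\{\0, d\bbe_1, \ldots, d\bbe_n\}$. These $n+1$ points are affinely independent, so by Remark~\ref{rem:vertex} and Theorem~\ref{thm: fulldimcones}, the polytope $\pf(\bbm, \bbd) = \conv(\calX(\bbu))$ is an $n$-simplex.

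For the converse, the plan is to assume $\bbm \neq (n-1, 1)$ with $n \geq 3$ and exhibit a facet of $\pf(\bbm, \bbd)$ carrying more than $n$ vertices; since every facet of a simplicial $n$-polytope is an $(n-1)$-simplex with exactly $n$ vertices, this will give the conclusion. We will track two facets coming from Proposition~\ref{prop: ineqDescription}: the inequality $x_1 \geq 0$ always defines a facet $F_1$, and whenever $\bbm \neq (0, n)$, the inequality $x_1 + \cdots + x_n \leq s_n$ with $s_n = u_1 + \cdots + u_n$ defines another facet $F_2$. By Remark~\ref{rem:vertex} and Theorem~\ref{thm: fulldimcones}, the vertices of $\pf(\bbm, \bbd)$ are the $\bbu$-extreme points, i.e., the permutations of the templates $T_k := (\underbrace{0, \ldots, 0}_{k}, u_{k+1}, \ldots, u_n)$ for $m_0 \leq k \leq n$. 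The vertices on $F_2$ are precisely the permutations of $\bbu = T_{m_0}$, so they number $N := \binom{n}{m_0, m_1, \ldots, m_\ell}$.

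The cube case $\bbm = (0, n)$ will be handled directly: by Example~\ref{ex: ineqDescription}, $\pf(\bbm, \bbd)$ is the cube $[0, d_1]^n$ with vertex set $\{0, d_1\}^n$, and $F_1$ contains $2^{n-1} > n$ vertices for $n \geq 3$. Otherwise, requiring $F_2$ to be a simplex forces $N = n$, equivalently $\prod_{i=0}^\ell m_i! = (n-1)!$. A short case analysis will pin down the solutions: if two or more of the $m_i$ are at least $2$, then writing $m_a$ for the maximum gives $N \geq \binom{n}{m_a} \geq \binom{n}{2} > n$ for $n \geq 4$, and no such $\bbm$ exists when $n = 3$; if at most one $m_i$ is $\geq 2$, then $\prod_i m_i! = (n-1)!$ forces exactly one $m_i = n-1$ and exactly one other $m_j = 1$, yielding $\bbm \in \{(n-1, 1),\ (1, n-1),\ (0, 1, n-1),\ (0, n-1, 1)\}$. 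For each of the three candidates other than $(n-1,1)$, we will count the vertices of $F_1$ by enumerating permutations of the templates $T_k$ with first coordinate $0$: the counts are $2^{n-1}$ for $\bbm \in \{(1, n-1), (0, 1, n-1)\}$ and $(n-1)\cdot 2^{n-2} + 1$ for $\bbm = (0, n-1, 1)$, each exceeding $n$ for $n \geq 3$. The main obstacle is the case analysis identifying candidates with $N = n$; once that is in hand, the template enumerations for $F_1$ are routine binomial computations.
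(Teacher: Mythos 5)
Your proof is correct, and it takes a genuinely different route from the paper's. The paper proceeds by dimension reduction: it observes that $x_1 \ge 0$ defines a facet $F = \{0\}\times \pf(u_2,\dots,u_n)$, so if $\pf(\bbu)$ is simplicial then $\pf(u_2,\dots,u_n)$ is an $(n-1)$-simplex, and iterating this down to dimension $2$ it invokes the classification of $2$-dimensional parking function polytopes (Figure \ref{fig: pf-types-dim2}) to conclude that $\bbm(u_{n-1},u_n)=(1,1)$, forcing $u_1=\cdots=u_{n-1}=0$. You instead count vertices directly on two facets. Your first observation — that the vertices on the facet $F_2=\{\sum x_i = u_1+\cdots+u_n\}$ are exactly the permutations of $\bbu$, and hence number $N=n!/(m_0!\,m_1!\cdots m_\ell!)$ — is a nice shortcut: requiring $N=n$ immediately restricts $\bbm$ to $\{(n-1,1),(1,n-1),(0,1,n-1),(0,n-1,1)\}$, and you then rule out the latter three by exhibiting more than $n$ vertices on the facet $x_1=0$, which is a straightforward enumeration over the templates $T_k$. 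Your approach is more computational but also more self-contained, as it avoids relying on the (unproved in the text) $2$-dimensional classification; the paper's iteration is shorter but pushes the base case work into Figure \ref{fig: pf-types-dim2}. One small point worth stating explicitly if you flesh this out: for $\bbm\neq(0,n)$, the inequality $\sum x_i \le \sum_j u_j$ is facet-defining by Proposition \ref{prop: ineqDescription} (case (2), (3), or (4) with $|I|=n$), which is what licenses treating $F_2$ as an $(n-1)$-dimensional face in the first place.
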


\begin{proof} 
As shown in Example \ref{ex: ineqDescription}, if $\bbm = (n-1, 1)$, the parking function polytope $\pf(\bbm, \bbd)$ is a simplex and thus is simplicial. 

Conversely, suppose $\pf(\bbm, \bbd)$ is simplicial. Let $\bbu=(u_1, \dots, u_n)$ be the vector whose MD pair is $(\bbm, \bbd)$. By Proposition \ref{prop: ineqDescription}, the inequality
\[ x_1 \ge 0 \]
defines a facet, which we denote by $F$, of $\pf(\bbu).$ As vertices of $F$ are the $\bbu$-extreme points $\bbx$ that attains the equality of the above inequality, i.e., $x_1=0,$ one observes that 
\[ F = \{0\} \times \pf(u_2, u_3, \dots, u_n).\]
Given that $\pf(\bbu)=\pf(\bbm,\bbd)$ is simplicial, we must have that $\pf(u_2, u_3, \dots, u_n)$ is an $(n-1)$-dimsional simplex, which is simplicial. Iterating the arguments above, we will be able to show that $\pf(u_{n-1}, u_n)$ is a $2$-dimensional simplex, i.e., a triangle. However, as we have shown in Figure \ref{fig: pf-types-dim2} that there are only three combinatorial different $2$-dimensional parking polytopes, and among these the only triangle is the one whose associated multiplicity vector is $(1,1).$ Hence, the multiplicity vector of $(u_{n-1}, u_n)$ is $(1,1)$. This implies that $u_{n-1}=0$ and $u_n > 0.$ Since the vector $\bbu=(u_1,\dots, u_n)$ satisfies
\[ 0 \le u_1 \le u_2 \le \dots \le u_{n-1} \le u_n,\]
we conclude that $\bbu=(0,0, \dots, 0, u_n),$ whose multiplicity vector is $(n-1,1).$
\end{proof}

\section{\texorpdfstring{$h$-vectors}{h-vectors}}

Given a poset $(Q,\leq_Q)$ where $Q \subset \N,$ we say that the ordered pair $(i,j)$ is a \textit{descent} of $(Q,\leq_Q)$ if $i \lessdot_Q j$ and $j < i$, and say that $(i,j)$ is an \textit{ascent} if $i \lessdot_\calB j$ and $j > i$.

As noted in Corollary \ref{cor: simple}, $\pf(\bbm, \bbd)$ is simple if and only if either $\bbm = (0,n)$ or $(n-1, 1)$ or $m_1 = \cdots = m_{\ell-1} = 1$ for some $\ell \geq 2.$ This implies that for every $\calB \in \Omega_\bbm$, the preorder $\leq_\calB$ is a poset and its Hasse diagram is a tree. We will denote the number of descents and ascents of the poset $([0,n], \leq_\calB)$ by $\des(\calB)$ and $\asc(\calB)$, respectively. The following lemma, which is a slight variation of \cite[Theorem 4.2]{Postnikov2006}, expresses the $h$-polynomials of simple parking function polytopes in terms of descents and ascents.

\begin{lem}\label{lem: hpoly-descents}
    If $\pf(\bbm, \bbd)$ is an $n$-dimensional simple polytope, then its $\bbh$-polynomial equals 
    \begin{equation}\label{eq: hpolynomial}
        h(t) = \sum_{\type(\calB) \in \Omega_\bbm}t^{\des(\calB)} = \sum_{\type(\calB) \in \Omega_\bbm}t^{\asc(\calB)}.
    \end{equation}
\end{lem}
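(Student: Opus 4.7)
The plan is to apply the standard line-shelling formula for the $h$-polynomial of a simple polytope: if $P$ is a simple $n$-dimensional polytope and $\omega \in \R^n$ is not constant on any edge of $P$, then
\[h_i(P) = \#\left\{ v \in V(P) : \#\{\text{edges } vv' \text{ at } v \text{ with } \omega \cdot v' < \omega \cdot v\} = i \right\}\]
(see \cite[Section~8.3]{Ziegler1995}). The strategy is to take $\omega := (1,2,\dots,n)$ and match the $\omega$-decreasing edges at each vertex $\bbv_\calB$ with the ascents of $\preceq_\calB$.

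The first step is to describe the edges of $\pf(\bbm,\bbd)$ at a vertex $\bbv_\calB$. By Theorem~\ref{thm: fulldimcones} and Lemma~\ref{lem: preordcone}/\ref{itm:minimaldescription}, the $n$ facets of the normal cone $\ssigma_\calB$ are in bijection with the $n$ covers $i \lessdot_\calB j$ of the tree Hasse diagram of $\preceq_\calB$, and hence so are the $n$ edges incident to $\bbv_\calB$. For each cover $(i,j)$, I will identify the adjacent vertex $\bbv_{\calB'}$ by locating the $\bbu$-extreme point (in the sense of Remark~\ref{rem:vertex}) whose normal cone shares the facet $c_i = c_j$ of $\ssigma_\calB$; a case analysis based on Definition~\ref{def: v_b} then establishes, with the convention $v_0 := 0$ and $\bbe_0 := \0$, the formula
\[\bbv_{\calB'} - \bbv_\calB = (v_j - v_i)(\bbe_i - \bbe_j), \qquad v_i := (\bbv_\calB)_i,\ v_j := (\bbv_\calB)_j.\]

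The second step is the sign computation. Since $i \lessdot_\calB j$ is a strict cover we have $v_i < v_j$, and setting $\omega_0 := 0$ gives
\[\omega \cdot (\bbv_{\calB'} - \bbv_\calB) = (v_j - v_i)(i-j),\]
which is negative precisely when $i < j$, i.e., when $(i,j)$ is an ascent of $\preceq_\calB$. Thus the number of $\omega$-decreasing edges at $\bbv_\calB$ equals $\asc(\calB)$, yielding
\[h(t) = \sum_{\type(\calB) \in \Omega_\bbm} t^{\asc(\calB)}.\]
The equality with $\sum t^{\des(\calB)}$ then follows from the Dehn-Sommerville relation $h_i = h_{n-i}$ combined with the identity $\asc(\calB) + \des(\calB) = n$ (the Hasse tree on $n+1$ vertices has exactly $n$ edges, each classified as either an ascent or a descent), which provides a term-by-term complementation on $\Omega_\bbm$.

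The main obstacle is justifying the coordinate formula in step one. Covers involving the ``special'' blocks $B_{-1}, B_0, B_1$ can shift the type of $\calB'$ between consecutive elements $\bba_s, \bba_{s+1}$ of $\Omega_\bbm$ (Definition~\ref{def: vertex-type}) rather than preserving the type, so the neighbor cone is not obtained by naively flipping a single inequality of $\ssigma_\calB$. The case work is organized by whether $i = 0$, whether $i \in B_{-1}$ with $j \in B_1$, or whether both $i, j$ lie in singleton positive blocks; in each case one checks that the only coordinates of $\bbv_\calB$ which change are those at positions $i$ and $j$, and that their values swap, so that the uniform formula above holds and the sign computation finishes the proof.
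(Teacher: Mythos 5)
Your plan — applying the line-shelling formula for the $h$-vector of a simple polytope via a generic functional $\omega = (1,\dots,n)$, and matching $\omega$-decreasing edges at each vertex $\bbv_\calB$ with ascents of the Hasse tree of $\preceq_\calB$ — is a reasonable strategy; the paper itself treats the lemma as a variation of a result of Postnikov–Reiner–Williams whose proof proceeds by exactly this kind of sweeping argument, so you are re-deriving it rather than citing it. However, there is a concrete gap in the execution.

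The problem is the claimed uniform edge formula $\bbv_{\calB'} - \bbv_\calB = (v_j - v_i)(\bbe_i - \bbe_j)$ together with the assertion "since $i \lessdot_\calB j$ is a strict cover we have $v_i < v_j$." Both fail for the covers $(i,0)$ with $i \in B_{-1}$ and $0 \in B_0 = \{0\}$. These covers occur at every vertex $\bbv_\calB$ with $\type(\calB) = \bba_s$ for $s \geq 1$ (there are $|B_{-1}| = m_0 + s > 0$ of them), yet in this situation $v_i = 0 = v_0$, so $v_i = v_j$ rather than $v_i < v_j$, and your formula returns the zero vector. The actual adjacent vertex has type $\bba_{s-1}$: position $i$ moves from $B_{-1}$ into a new positive block while $0$ absorbs back into $B_{-1}$, so only the $i$-th coordinate of $\bbv_\calB$ changes, increasing from $0$ to some positive $d_t$ — a one-coordinate increase, not a swap. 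Consequently the sign computation $\omega \cdot (\bbv_{\calB'} - \bbv_\calB) = (v_j - v_i)(i - j) = 0$ is wrong (the true value is $d_t \cdot i > 0$), and the chain of reasoning "negative precisely when $i < j$" does not establish anything for these covers. The three cases you list at the end ($i = 0$; $i \in B_{-1}$ with $j \in B_1$; both $i,j$ in singleton positive blocks) do not include this case $j = 0$, $i \in B_{-1}$, which is precisely where the formula breaks.

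As it happens, the final count is still correct: each such cover $(i,0)$ is a descent (since $0 < i$) and the actual edge is strictly upward, so it correctly contributes nothing to the downward count. But this needs to be argued directly for that case; the uniform formula does not deliver it. Once you patch that case in, the rest of the argument — including the Dehn–Sommerville symmetry combined with $\asc(\calB) + \des(\calB) = n$ to pass between the two sums — goes through.
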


\begin{cor}\label{cor: h-group-by-types}
    Let $\bbm = (m_0, m_1, \dots, m_\ell)$ and $r = m_1 + \cdots + m_\ell$. If $\pf(\bbm, \bbd)$ is an $n$-dimensional simple polytope, then its $\bbh$-polynomial equals 
    \begin{equation}\label{eq: hpolynomial2}
        h(t) = \sum^{r}_{i = 0}\left(\sum_{\type(\calB) = \bba_i}t^{\des(\calB)}\right) = \sum^{r}_{i = 0}\left(\sum_{\type(\calB) = \bba_i}t^{\asc(\calB)}\right)
    \end{equation}
    where $\bba_i$ is given as in Definition \ref{def: vertex-type}.
\end{cor}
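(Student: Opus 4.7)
The plan is straightforward: this corollary is essentially a reorganization of the sum in Lemma \ref{lem: hpoly-descents} by grouping skewed binary partitions according to their type. The main point is that Definition \ref{def: vertex-type} tells us $\Omega_\bbm = \{\bba_0, \bba_1, \dots, \bba_r\}$ where $r = m_1 + \cdots + m_\ell$, and this is a partition of $\Omega_\bbm$ into $r+1$ distinct elements (the $\bba_i$ are pairwise distinct since they have different values of $|a_{-1}|$ or different values of $a_0 \in \{0, 0^\circ\}$).

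First I would invoke Lemma \ref{lem: hpoly-descents} to write
\[ h(t) = \sum_{\type(\calB) \in \Omega_\bbm} t^{\des(\calB)} = \sum_{\type(\calB) \in \Omega_\bbm} t^{\asc(\calB)}. \]
Next I would observe that $\Omega_\bbm = \{\bba_0, \bba_1, \dots, \bba_r\}$ by Definition \ref{def: vertex-type}, and that these $r+1$ skewed binary compositions are pairwise distinct (one checks directly that $|(\bba_i)_{-1}|$ strictly increases with $i$, with $\bba_0$ distinguished from the rest by whether $(b_{-1}, b_0) \in \P \times \{0\}$ or $\N \times \N^\circ$).

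Then the index set $\{\calB : \type(\calB) \in \Omega_\bbm\}$ splits as the disjoint union over $i \in \{0, 1, \dots, r\}$ of $\{\calB : \type(\calB) = \bba_i\}$, and grouping the sum gives
\[ h(t) = \sum_{i=0}^{r} \left( \sum_{\type(\calB) = \bba_i} t^{\des(\calB)} \right) = \sum_{i=0}^{r} \left( \sum_{\type(\calB) = \bba_i} t^{\asc(\calB)} \right), \]
which is exactly the claimed identity. There is no real obstacle here; the result is a purely formal consequence of Lemma \ref{lem: hpoly-descents} and the enumeration of $\Omega_\bbm$ given in Definition \ref{def: vertex-type}. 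The value of the corollary lies in setting up the subsequent computation of the inner sums $\sum_{\type(\calB) = \bba_i} t^{\des(\calB)}$ in terms of generalized Eulerian polynomials, which I expect to be handled in the following subsection.
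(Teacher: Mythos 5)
Your proposal is correct and matches the paper's (implicit) approach: the paper states Corollary \ref{cor: h-group-by-types} without proof, treating it as an immediate regrouping of the sum in Lemma \ref{lem: hpoly-descents} over the $r+1$ distinct elements $\bba_0,\dots,\bba_r$ of $\Omega_\bbm$. Your observation that the $\bba_i$ are pairwise distinct is the only point worth noting, and you handle it correctly.
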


Let $Q$ be a poset on $[n]$. We define $\fS_n(Q) := \{\sigma (Q)\ |\ \sigma \in \fS_n\}$ to be the set of all posets on $[n]$ having the same Hasse diagram as $Q$. 

\begin{defn}
    For $p,q\in \N$, let $T(p,q)$ be the poset on $[p+q]$ defined by the covering relations $j \lessdot j+1$ for all $j \in [p-1]$ and $p \lessdot k$ for all $k \in [p+1, q].$
\end{defn}

\begin{defn}\label{def: genEulerianPolynomial}
    Let $(Q, \leq_T)$ be a poset on $[n]$ whose Hasse diagram is a tree (a graph with no cycle). We define the \emph{generalized Eulerian polynomial} on $Q$ to be 
    \[A(Q,t) := \sum_{T \in \fS_n(Q)} t^{\asc(T)}.\]
\end{defn}

Let $\tau = n\cdots21 \in \fS_n.$ Then $\asc(\tau(T)) = \des(T)$ for all $ T \in \fS_n (Q)$. Thus,
\[A(Q,t) := \sum_{T \in \fS_n(Q)} t^{\asc(T)} = \sum_{T \in \fS_n(Q)} t^{\asc(\tau(T))} = \sum_{T \in \fS_n(Q)} t^{\des(T)}.\]

The generalized Eulerian polynomial $A(Q,t)$ has degree $n-1$ and is palindromic. We also have that $A(T(p,1),t)$ is the (usual) Eulerian polynomial $A_{p+1}(t)$ of degree $p.$

\begin{rem}\label{rmk: h-square-simplex}
    $\pf(\bbm, \bbd)$ is an $n$-cube if $\bbm = (0,n)$, and is an $n$-simplex if $\bbm = (n-1,1)$. In these cases, their $h$-polynomials are known to be $(1+t)^n$ and $1+t + \cdots t^n$, respectively.
\end{rem}

We now describe how to obtain a more explicitly formula for the $h$-polynomials of all other simple parking function polytopes, i.e., $\pf(\bbm, \bbd)$ with $m_1 = \cdots = m_{\ell-1} = 1$ for some $\ell \geq 2.$ Note that $n = m_0+\ell -1 + m_\ell$ and $r = m_1 + \cdots + m_\ell = \ell -1 + m_\ell$. Thus, by Corollary \ref{cor: h-group-by-types}, we may write 
\begin{align}\label{eq: h-sum-of-g_i}
    h(t) = \sum^{\ell -1 +m_\ell}_{i = 0}g_i(t) \text{ where } g_i(t) := \sum_{\type(\calB) = \bba_i}t^{\asc(\calB)}.
\end{align}

For $\ell-1 \leq i \leq \ell -1 + m_\ell,$ the poset $([0,n], \preceq_\calB)$ with $\type(\calB) = \bba_i$ is given on the right of Figure \ref{fig: types-b-i}. Thus, 
\begin{align}\label{eq: third-g_i}
    g_i(t) = \binom{n}{m_0 + i}t^{\ell - 1 + m_\ell -i} \text{ for } \ell-1 \leq i \leq \ell -1 + m_\ell.
\end{align}

For $1 \leq i \leq \ell - 2,$ the poset $([0,n], \preceq_\calB)$ with $\type(\calB) = \bba_i$ is given on the left of Figure \ref{fig: types-b-i}. Thus, 
\begin{align}\label{eq: second-g_i}
    g_i(t) = \binom{n}{m_0 + i}tA(T(\ell - i -1, m_\ell),t) \text{ for } 1 \leq i \leq \ell - 2.
\end{align}

\begin{figure}
    \centering
\begin{tikzpicture}[scale = 1]

        \node (min12) at (-6.5+9,1) {$*$};
        \draw (-6.5+9,1) circle (7pt);
        \node (min13) at (-5.5+9,1) {$\cdots$};
        \node (min14) at (-4.5+9,1) {$*$};
        \draw (-4.5+9,1) circle (7pt);
        \node (s21) at (-5.5+9,2) {$0$};
        \draw (-5.5+9,2) circle (7pt);
        \node (max61) at (-6.5+9,3) {$*$};
        \draw (-6.5+9,3) circle (7pt);
        \node (max63) at (-5.5+9,3) {$\cdots$};
        \node (max64) at (-4.5+9,3) {$*$};
        \draw (-4.5+9,3) circle (7pt);
        \draw  (min12)--(s21) (min14)--(s21) (s21)--(max61) (s21)--(max64);
        \draw [decorate, blue,decoration={brace,amplitude=5pt,raise=4ex}]
  (2,3) -- (5,3) node[midway,yshift=3em]{$m_\ell+\ell -1 -i$ nodes};

        \node (min22) at (-6.5+2.5,1) {$*$};
        \draw (-6.5+2.5,1) circle (7pt);
        \node (min23) at (-5.5+2.5,1) {$\cdots$};
        \node (min24) at (-4.5+2.5,1) {$*$};
        \draw (-4.5+2.5,1) circle (7pt);
        \node (s22) at (-5.5+2.5,2) {$0$};
        \draw (-5.5+2.5,2) circle (7pt);
        \node (s32) at (-5.5+2.5,3) {$*$};
        \draw (-5.5+2.5,3) circle (7pt);
        \node (s42) at (-5.5+2.5,4) {$\vdots$};
        \node (s52) at (-5.5+2.5,5) {$*$};
        \draw (-5.5+2.5,5) circle (7pt);
        \node (max72) at (-6.5+2.5,6) {$*$};
        \draw (-6.5+2.5,6) circle (7pt);
        \node (max73) at (-5.5+2.5,6) {$\cdots$};
        \node (max74) at (-4.5+2.5,6) {$*$};
        \draw (-4.5+2.5,6) circle (7pt);
        \draw (min22)--(s22) (min24)--(s22) (s22)--(s32) (s32)--(s42) (s42)--(s52) (s52)--(max72) (s52)--(max74);
        \draw [decorate, blue,decoration={brace,amplitude=5pt,raise=4ex}]
  (-1-3.5,6) -- (2-3.5,6) node[midway,yshift=3em]{$m_\ell$ nodes};
        \draw [decorate, blue,decoration={brace,amplitude=5pt,raise=4ex}]
  (-3.5,2.7) -- (-3.5,5.2) node[midway,xshift=-6em, yshift=0em]{$\ell-1-i$ nodes};
\end{tikzpicture}
\captionsetup{justification=centering}
    \caption{$([0,n], \preceq_\calB)$ with $\type(\calB) = \bba_i \in \Omega_\bbm$ satisfying $1 \leq i \leq \ell-2$ (left) and $\ell-1 \leq i \leq m_\ell$ (right), where $*$ denotes an integer in $[n]$}
    \label{fig: types-b-i}
\end{figure}
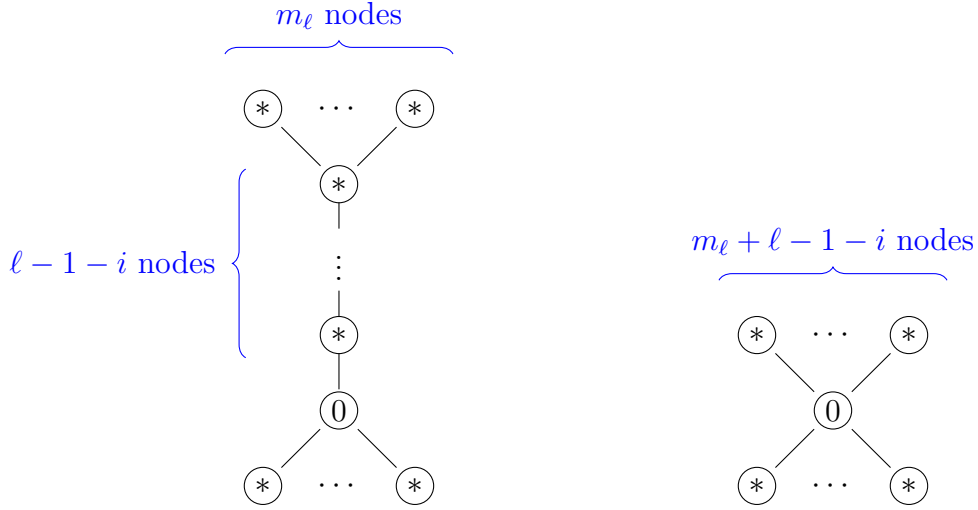

We now aim to compute the polynomial $g_0(t)$ and show that $g_0(t) = tA(Q,t)$ where $Q$ is a poset on $[n]$ depending on whether $\bbm$ satisfies $m_0 = 0$ or not. If $m_0 = 0,$ then the poset $([0,n], \preceq_\calB)$ with $\type(\calB) = \bba_0$ is shown on the left of Figure \ref{fig: types-b0}. Thus, by setting $Q = T(\ell -1, m_\ell),$ we have
\begin{align}\label{eq: g0-m0-0}
    g_0(t) = tA(T(\ell -1, m_\ell),t) \text{ if } m_0 = 0.
\end{align}

If $m_0 \neq 0,$ then the poset $([0,n], \preceq_\calB)$ with $\type(\calB) = \bba_0$ is shown on the right of Figure \ref{fig: types-b0}. Thus, by setting $Q$ to be the induced poset on $[n]$ of the poset on the right of Figure \ref{fig: types-b0}, we have
\begin{align}\label{eq: g0-m0-neq-0}
    g_0(t) = tA(Q,t) \text{ if } m_0 \neq 0.
\end{align}

\begin{figure}[ht]
    \centering
\begin{tikzpicture}[scale = 1]
        \node (min11) at (-7+9,1) {0};
        \draw (-7+9,1) circle (7pt);
        \node (min12) at (-6+9,1) {$*$};
        \draw (-6+9,1) circle (7pt);
        \node (min13) at (-5+9,1) {$\cdots$};
        \node (min14) at (-4+9,1) {$*$};
        \draw (-4+9,1) circle (7pt);
        \node (s21) at (-5.5+9,2) {$*$};
        \draw (-5.5+9,2) circle (7pt);
        \node (s31) at (-5.5+9,3) {$*$};
        \draw (-5.5+9,3) circle (7pt);
        \node (s41) at (-5.5+9,4) {$\vdots$};
        \node (s51) at (-5.5+9,5) {$*$};
        \draw (-5.5+9,5) circle (7pt);
        \node (max61) at (-6.5+9,6) {$*$};
        \draw (-6.5+9,6) circle (7pt);
        \node (max63) at (-5.5+9,6) {$\cdots$};
        \node (max64) at (-4.5+9,6) {$*$};
        \draw (-4.5+9,6) circle (7pt);
        \draw (min11)--(s21) (min12)--(s21) (min14)--(s21) (s21)--(s31) (s31)--(s41) (s41)--(s51) (s51)--(max61) (s51)--(max64);
        \draw [decorate, blue,decoration={brace,amplitude=5pt,raise=4ex}]
  (2,6) -- (5,6) node[midway,yshift=3em]{$m_\ell$ nodes};
        \draw [decorate, blue,decoration={brace,amplitude=5pt, mirror,raise=4ex}]
  (4.5,1.7) -- (4.5,5.2) node[midway,xshift=5em, yshift=0em]{$\ell-1$ nodes};

        \node (s22) at (0.5-3.5,1) {$0$};
        \draw (0.5-3.5,1) circle (7pt);
        \node (s32) at (0.5-3.5,2) {$*$};
        \draw (0.5-3.5,2) circle (7pt);
        \node (s42) at (0.5-3.5,3.5) {$\vdots$};
        \node (s52) at (0.5-3.5,5) {$*$};
        \draw (0.5-3.5,5) circle (7pt);
        \node (max71) at (-0.5-3.5,6) {$*$};
        \draw (-0.5-3.5,6) circle (7pt);
        \node (max73) at (0.5-3.5,6) {$\cdots$};
        \node (max74) at (1.5-3.5,6) {$*$};
        \draw (1.5-3.5,6) circle (7pt);
        \draw  (s22)--(s32) (s32)--(s42) (s42)--(s52) (s52)--(max71) (s52)--(max74);
        \draw [decorate, blue,decoration={brace,amplitude=5pt,raise=4ex}]
  (-1-3.5,6) -- (2-3.5,6) node[midway,yshift=3em]{$m_\ell$ nodes};
\end{tikzpicture}
\captionsetup{justification=centering}
    \caption{$([0,n], \preceq_\calB)$ with $\type(\calB) = \bba_0 \in \Omega_\bbm$ satisfying $m_0 = 0$ (left) and $m_0 \neq 0$ (right), where $*$ denotes an integer in $[n]$}
    \label{fig: types-b0}
\end{figure}

\noindent Consequently, by equations \eqref{eq: h-sum-of-g_i} and \eqref{eq: third-g_i}--\eqref{eq: second-g_i},
\begin{align}
    h(t) &= g_0(t) + \left(\sum^{\ell -2}_{i = 1}g_i(t)\right) + \sum^{\ell - 1 + m_\ell}_{i = \ell -1}g_i(t)\nonumber\\
    &= tA(Q,t) + \left(\sum^{\ell -2}_{i = 1}\binom{n}{m_0 + i}tA(T(\ell - i -1, m_\ell),t)\right) + \sum^{\ell - 1 + m_\ell}_{i = \ell -1}\binom{n}{m_0 + i}t^{\ell - 1 + m_\ell -i}\nonumber\\
\label{eq: explicitGenEulerian}    &= tA(Q,t) + \left(\sum^{\ell -2}_{i = 1}\binom{n}{i+m_\ell}tA(T(i, m_\ell),t)\right) + \sum^{m_\ell}_{i = 0}\binom{n}{i}t^{i}.
\end{align}
Hence, if $m_0 = 0,$ then by \eqref{eq: g0-m0-neq-0} we have
\begin{align}
    h(t) &= tA(Q,t) + \left(\sum^{\ell -2}_{i = 1}\binom{n}{i+m_\ell}tA(T(i, m_\ell),t)\right) + \sum^{m_\ell}_{i = 0}\binom{n}{i}t^{i}\nonumber\\
    &= tA(T(\ell -1, m_\ell),t) + \left(\sum^{\ell -2}_{i = 1}\binom{n}{i+m_\ell}tA(T(i, m_\ell),t)\right) + \sum^{m_\ell}_{i = 0}\binom{n}{i}t^{i} \nonumber\\
\label{eq: explicitGenEulerian-m0=0}\tag{M0}    &= \left(\sum^{m_\ell}_{i = 0}\binom{n}{i}t^{i}\right) + \sum^{\ell -1}_{i = 1}\binom{n}{i+m_\ell}tA(T(i, m_\ell),t).
\end{align}

Since $h$-polynomials and generalized Eulerian polynomials are palindromic, we have 
\begin{align}
\label{eq: h-palindromic}    h(t) &= t^{n}h(t^{-1})\\
\label{eq: g0-palindromic}    A(Q,t) &= t^{n-1}A(Q,t^{-1})\\
\label{eq: Eulerian-palindromic}   A(T(i, m_\ell),t) &=  t^{i + m_\ell-1} A(T(i, m_\ell),t^{-1}).
\end{align}
Thus, by equations \eqref{eq: explicitGenEulerian} and \eqref{eq: h-palindromic}--\eqref{eq: Eulerian-palindromic}, 
\begin{align}
    h(t) &= t^{n}h(t^{-1})\nonumber\\
     &= t^n\left[ t^{-1}A(Q,t^{-1}) + \left(\sum^{\ell -2}_{i = 1}\binom{n}{i+m_\ell}t^{-1}A(T(i, m_\ell),t^{-1})\right) + \sum^{m_\ell}_{i = 0}\binom{n}{i}t^{-i}\right]\nonumber\\
    &= t^{n-1}A(Q,t^{-1}) + \left(\sum^{\ell -2}_{i = 1}\binom{n}{i+m_\ell}t^{n-1}A(T(i, m_\ell),t^{-1})\right) + \sum^{m_\ell}_{i = 0}\binom{n}{i}t^{n-i} \nonumber\\
\label{eq: explicitGenEulerian-palindromic}    &= A(Q,t) + \left(\sum^{\ell -2}_{i = 1}\binom{n}{i+m_\ell}t^{m_0+\ell -i-1}A(T(i, m_\ell),t)\right) + \sum^{m_\ell}_{i = 0}\binom{n}{i}t^{n-i}
\end{align}
Equations \eqref{eq: explicitGenEulerian} and \eqref{eq: explicitGenEulerian-palindromic} allow us to express $A(Q,t)$ as
\begin{align}
    A(Q,t) &= \frac{1}{t-1}\left[\left[\sum^{\ell -2}_{i = 1}\binom{n}{i+m_\ell}(t^{m_0+\ell -i-1}-t)A(T(i, m_\ell),t)\right] + \sum^{m_\ell}_{i = 0}\binom{n}{i}(t^{n-i}-t^i)\right].\nonumber
\end{align}
Hence, if $m_0 \neq 0,$ then
\begin{align}\label{eq: eq: explicitGenEulerian-m0-neq-0}\tag{M1}
    h(t) &= g(t) +  \left(\sum^{m_\ell}_{j = 0}\binom{n}{j}t^j\right) + t\sum^{\ell - 2}_{i = 1}\binom{n}{i+m_\ell}A(T(i,m_\ell),t),
\end{align}
where 
\begin{align}
    g(t) = g_0(t) &= tA(Q,t)\nonumber\\
\label{eq: g0}\tag{G0}    &= \left[\sum^{z}_{i = 0}\binom{n}{i}\left(\sum^{n-i}_{j = i+1}t^j\right)\right] + \sum^{\ell - 2}_{i = 1}\binom{n}{i+m_\ell}\left(\sum^{n-i-m_\ell}_{j = 2}t^j\right)A(T(i,m_\ell),t)
\end{align}
and $z = \min\left(m_\ell, n-m_\ell-1\right).$ We conclude our calculation in the following theorem.

\begin{thm}\label{thm: h-polynomial} Let $(\bbm, \bbd)$ be an MD pair where $\bbm = (m_0, m_1, \dots, m_\ell)$ for some $\ell \geq 2.$ Suppose that $\pf(\bbm, \bbd)$ is $n$-dimensional and simple. Then its $h$-polynomial is given by 
    \begin{align}\label{eq: h-explicit-thm}\tag{H}
    h(t) &=
        \begin{cases}
          \mathlarger{\left(\sum^{m_\ell}_{j = 0}\binom{n}{j}t^j\right) + t\sum^{\ell - 1}_{i = 1}\binom{n}{i+m_\ell}A(T(i,m_\ell),t)}  &\text{ if } m_0 = 0\\
         \mathlarger{g(t) +  \left(\sum^{m_\ell}_{j = 0}\binom{n}{j}t^j\right) + t\sum^{\ell - 2}_{i = 1}\binom{n}{i+m_\ell}A(T(i,m_\ell),t)}   &\text{ otherwise }
        \end{cases}
    \end{align}
where
    \begin{align*}
    g(t) &= \left[\sum^{z}_{i = 0}\binom{n}{i}\left(\sum^{n-i}_{j = i+1}t^j\right)\right] + \sum^{\ell - 2}_{i = 1}\binom{n}{i+m_\ell}\left(\sum^{n-i-m_\ell}_{j = 2}t^j\right)A(T(i,m_\ell),t) 
    \end{align*}
and $z = \min\left(m_\ell, n-m_\ell -1\right).$
\end{thm}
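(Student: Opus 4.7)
The plan is to derive the formula by carefully decomposing the sum $\displaystyle \sum_{\type(\calB) \in \Omega_\bbm} t^{\asc(\calB)}$ provided by Lemma~\ref{lem: hpoly-descents} according to the finitely many possible types $\bba_i \in \Omega_\bbm$, as in Corollary~\ref{cor: h-group-by-types}. Since $\pf(\bbm,\bbd)$ is assumed simple with $\ell \geq 2$, Corollary~\ref{cor: simple} forces $m_1 = \cdots = m_{\ell-1} = 1$, so each Hasse diagram arising from a skewed binary partition of type $\bba_i$ is a tree, and the description of $\bba_i$ in Definition~\ref{def: vertex-type} dictates the shape of this tree. The first step is therefore to read off, case by case in $i$, the poset structure of $([0,n],\preceq_\calB)$ for $\type(\calB)=\bba_i$, and to express $g_i(t) := \sum_{\type(\calB)=\bba_i} t^{\asc(\calB)}$ as a product of a binomial coefficient (counting how many labelings give this type) and a generalized Eulerian polynomial on the appropriate tree.

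For $\ell-1 \leq i \leq \ell-1+m_\ell$, the tree degenerates to a ``double star'' whose central vertex is $0$ together with the $m_\ell$ top atoms, and one computes $g_i(t) = \binom{n}{m_0+i} t^{\ell-1+m_\ell-i}$. For $1 \leq i \leq \ell-2$, the tree is a chain of length $\ell-i-1$ attached to a fan of $m_\ell$ top leaves, so $g_i(t) = \binom{n}{m_0+i}\,t\,A(T(\ell-i-1, m_\ell), t)$. For $i=0$ with $m_0 = 0$, the poset at $0$ has an additional chain of length $\ell-1$, giving $g_0(t) = tA(T(\ell-1,m_\ell), t)$ directly, which when summed with the previous contributions yields the first branch of \eqref{eq: h-explicit-thm} after re-indexing $i \mapsto \ell-1-i$.

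The only substantive obstacle is the case $m_0 \neq 0$. Here the poset $Q$ underlying $g_0(t) = tA(Q,t)$ (the right diagram of Figure~\ref{fig: types-b0}) is not of the form $T(p,q)$, so $A(Q,t)$ has no closed form available from the literature. My strategy is to bypass computing $A(Q,t)$ intrinsically: write out $h(t)$ explicitly using the $g_i$'s already computed (this is equation~\eqref{eq: explicitGenEulerian}), and then apply the Dehn--Sommerville palindromic identity $h(t) = t^n h(t^{-1})$ together with the palindromic identities \eqref{eq: g0-palindromic} and \eqref{eq: Eulerian-palindromic} for $A(Q,t)$ and $A(T(i,m_\ell),t)$. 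Equating the two expressions for $h(t)$ and solving the resulting linear equation for $A(Q,t)$ yields a rational expression with denominator $t-1$; after telescoping one checks that the numerator is divisible by $t-1$, leaving the polynomial identity displayed in \eqref{eq: g0}.

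Finally, I would assemble the three ingredients. In the case $m_0 = 0$, collect the $g_i$'s to produce formula \eqref{eq: explicitGenEulerian-m0=0}. In the case $m_0 \neq 0$, substitute the formula for $g_0(t) = tA(Q,t)$ obtained via palindromicity into \eqref{eq: explicitGenEulerian}, simplify, and obtain \eqref{eq: eq: explicitGenEulerian-m0-neq-0}. Combining both cases gives the unified expression \eqref{eq: h-explicit-thm}. The verification that the numerator of the rational expression for $A(Q,t)$ is divisible by $t-1$ is the only place where nontrivial algebraic manipulation is required; everything else is a matter of bookkeeping between binomial coefficients and tree posets.
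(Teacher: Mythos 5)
Your proposal tracks the paper's own proof nearly step for step: the decomposition $h(t)=\sum_i g_i(t)$ via Lemma~\ref{lem: hpoly-descents} and Corollary~\ref{cor: h-group-by-types}, the case-by-case evaluation of $g_i(t)$ by reading off the tree poset dictated by $\bba_i$ (precisely Figures~\ref{fig: types-b-i} and \ref{fig: types-b0}), and — crucially — the use of Dehn--Sommerville palindromicity of $h(t)$ and of the generalized Eulerian polynomials to solve a linear equation for $A(Q,t)$ in the $m_0\neq 0$ case. This is exactly the paper's argument, including the re-indexing $i\mapsto \ell-1-i$ and $i \mapsto \ell-1+m_\ell-i$ to convert the coefficients $\binom{n}{m_0+i}$ into the $\binom{n}{i+m_\ell}$ and $\binom{n}{j}$ appearing in \eqref{eq: h-explicit-thm}.
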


We now aim to express $A(T(p,q),t)$ in terms of Eulerian polynomials. This will allow us to express equation \eqref{eq: h-explicit-thm} in Theorem \ref{thm: h-polynomial} in terms of Eulerian polynomials. To do this, we first observe that, by symmetry, $A(Q,t) = A(Q^*,t)$ for all poset $Q$ on $[n]$ whose Hasse diagram is a tree, where $Q^*$ denotes the dual poset of $Q.$ 

Let $p,q,n \in \P$ satisfy $p+q = n.$ If $p = 1,$ one can easily compute by a direct counting argument that $A(T(1,q),t) = 1 + t + \cdots t^{n-1}.$ If $p \geq 2,$ then $T(p,q)^*$ is the induced poset on $[n]$ of a poset $(\leq_\calB, [0,n])$ satisfying $\type(\calB) = \bbb_0 \in \Omega_\bbm$ where $\bbm = (q, 1, \dots, 1).$ Thus, we can deduce $A(T(p,q),t)$ by applying equation \eqref{eq: g0} to $\bbm = (q, 1, \dots, 1)$ to get
\begin{align}
    A(T(p,q),t) &= A(T(p,q)^*,t)\nonumber\\
    &= \frac{g(t)}{t}\nonumber\\
    &=\left[\sum^{1}_{i = 0}\binom{n}{i}\left(\sum^{n-i-1}_{j = i}t^j\right)\right] + \sum^{p - 2}_{i = 1}\binom{n}{i+1}\left(\sum^{n-i-2}_{j = 1}t^j\right)A(T(i,1),t), \text{ if } p \geq 2.\nonumber
\end{align}

Therefore, we have the following result.
\begin{lem}\label{lem: T(pq)}
    Let $p,q,n \in \P$ satisfy $p+q = n.$ Then 
    \begin{align}\label{eq: genEulerianExplicit}
    A(T(p,q),t)
    &=\left[\sum^{y}_{i = 0}\binom{n}{i}\left(\sum^{n-i-1}_{j = i}t^j\right)\right] + \sum^{p - 2}_{i = 1}\binom{n}{i+1}\left(\sum^{n-i-2}_{j = 1}t^j\right)A_{i+1}(t)
\end{align}
where $y = \min(1,p-1)$ and $A_k(t)$ denotes the Eulerian polynomial of degree $k-1.$
\end{lem}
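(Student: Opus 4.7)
The plan is to split the argument into two cases based on the value of $p$. For the base case $p = 1$, the poset $T(1, q)$ consists of a single minimum covered by $q = n - 1$ maximal elements. Each $Q \in \fS_n(T(1, q))$ is determined by its minimum element $i \in [n]$, and such a $Q$ has exactly $n - i$ ascents, namely the covers $i \lessdot k$ for the $n - i$ labels $k > i$. Summing gives $A(T(1, q), t) = \sum_{i=1}^{n} t^{n-i} = 1 + t + \cdots + t^{n-1}$. Since $y = \min(1, p - 1) = 0$, the right-hand side of the claimed formula reduces to $\binom{n}{0}\sum_{j=0}^{n-1} t^j$ with an empty second sum, matching the direct computation.

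For $p \geq 2$, the strategy is to recognize that equation \eqref{eq: g0} essentially computes $A(T(p, q), t)$ up to a factor of $t$. First I would establish the symmetry $A(Q, t) = A(Q^*, t)$ for any tree poset $Q$ on $[n]$: the order-reversing bijection $\fS_n(Q) \to \fS_n(Q^*)$, $T \mapsto T^*$, swaps ascents with descents, and combining with the identity $\sum_{T^*} t^{\asc(T^*)} = \sum_{T^*} t^{\des(T^*)}$ already proved in the paper yields $A(Q, t) = A(Q^*, t)$. Next I would identify $T(p, q)^*$, up to relabeling (which does not affect the generalized Eulerian polynomial), with the induced poset on $[n]$ of $([0, n], \preceq_\calB)$ when $\type(\calB) = \bba_0 = (q, 0, 1, 1, \dots, 1)$ for $\bbm = (q, 1, \dots, 1)$ with $p$ ones. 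Both posets consist of $q$ minimal elements covered by a single chain of length $p - 1$ that terminates in one top element, exactly as depicted on the right of Figure \ref{fig: types-b0}. Hence the polynomial $g(t) = g_0(t)$ given by \eqref{eq: g0} equals $t \cdot A(T(p, q)^*, t) = t \cdot A(T(p, q), t)$.

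It then remains to carry out the substitution $\ell = p$, $m_\ell = 1$, $m_0 = q$, $n = p + q \geq 3$ (so that $z = \min(m_\ell, n - m_\ell - 1) = 1$) into \eqref{eq: g0}, and to observe that $T(i, 1)$ is simply the chain on $i + 1$ elements so that $\fS_{i+1}(T(i, 1)) = \fS_{i+1}$ and therefore $A(T(i, 1), t) = A_{i+1}(t)$. Dividing $g(t)$ by $t$ and shifting the inner summation indices by $-1$ in each sum then produces the desired expression. The main obstacle is not conceptual but the bookkeeping: verifying that the first sum's inner range transforms from $j \in [i+1, n-i]$ to $j \in [i, n-i-1]$, and the second sum's from $j \in [2, n-i-1]$ to $j \in [1, n-i-2]$, exactly matching the form in the lemma statement.
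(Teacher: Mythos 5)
Your proof is correct and follows essentially the same route as the paper: direct counting for $p = 1$, the symmetry $A(Q,t)=A(Q^*,t)$ established via the order-reversing bijection swapping ascents and descents, identification of $T(p,q)^*$ with the poset $Q$ induced by $\bba_0$ for $\bbm=(q,1,\dots,1)$, and specialization of formula \eqref{eq: g0} followed by division by $t$. You have merely filled in some of the index bookkeeping and the verification that $z=1$ that the paper leaves implicit, so this is the same argument presented in slightly more detail.
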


Using equation \eqref{eq: genEulerianExplicit} to express $A(T(i,m_\ell), t)$ in equation \eqref{eq: h-explicit-thm} of Theorem \ref{thm: h-polynomial}, we can write the $h$-polynomial of $\pf(\bbm,\bbd)$ in terms of Eulerian polynomials. Together with Remark \ref{rmk: h-square-simplex}, we consequently have the following corollary.

\begin{cor}
Suppose that $\pf(\bbu)$ is $n$-dimensional and simple. Then its $h$-polynomial has the form $h(t) = r_0(t) + \sum^n_{i = 1}r_i(t)A_i(t)$ where $A_k(t)$ is the Eulerian polynomial of degree $k-1$ and $r_k(t)$ is a polynomial with nonnegative integral coefficients of degree $\leq n.$
\end{cor}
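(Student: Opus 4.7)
The plan is to combine three results already in the paper: the classification of simple parking function polytopes in Corollary~\ref{cor: simple}, the explicit $h$-polynomial formula in Theorem~\ref{thm: h-polynomial}, and the Eulerian expansion of $A(T(p,q),t)$ in Lemma~\ref{lem: T(pq)}.

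First I would dispose of the two degenerate simple types. When $\bbm=(0,n)$, the polytope $\pf(\bbu)$ is an $n$-cube with $h(t)=(1+t)^n$, and when $\bbm=(n-1,1)$, it is an $n$-simplex with $h(t)=1+t+\cdots+t^n$ (Remark~\ref{rmk: h-square-simplex}). In either case, setting $r_0(t):=h(t)$ and $r_i(t):=0$ for $i\geq 1$ immediately gives the claim, since both candidates for $r_0$ are polynomials with nonnegative integer coefficients and degree exactly $n$.

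For the remaining simple case $m_1=\cdots=m_{\ell-1}=1$ with $\ell\geq 2$, I would substitute the expansion of $A(T(i,m_\ell),t)$ from Lemma~\ref{lem: T(pq)} (with ambient poset size $N=i+m_\ell$) into each occurrence in formula~\eqref{eq: h-explicit-thm} of Theorem~\ref{thm: h-polynomial}, and then regroup by Eulerian polynomial. The non-Eulerian prefix $\sum_{j=0}^{y}\binom{N}{j}\sum_{k=j}^{N-j-1}t^k$ of Lemma~\ref{lem: T(pq)}, together with the purely polynomial terms appearing in Theorem~\ref{thm: h-polynomial}, collect into $r_0(t)$, while for each $k\geq 2$ the total coefficient of $A_k(t)$ produced by the substitution defines $r_k(t)$. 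One may take $r_1(t):=0$ since $A_1(t)=1$ can be folded into $r_0(t)$.

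The claim then reduces to verifying two properties of each $r_k(t)$: nonnegative integer coefficients and degree at most $n$. Nonnegativity and integrality are automatic because every factor produced by the substitution is a binomial coefficient $\binom{n}{\cdot}$ or $\binom{i+m_\ell}{\cdot}$, or a partial sum $\sum t^j$, and sums and products of such objects preserve both properties. For the degree bound I would inspect summand by summand, using $i+m_\ell\leq \ell-1+m_\ell\leq n$ in the $m_0=0$ branch, and the analogous estimate $i+m_\ell\leq \ell-2+m_\ell\leq n-m_0-1$ in the $m_0\neq 0$ branch, together with the observation that the worst exponent $n-i$ from the first sum of $g(t)$ in Theorem~\ref{thm: h-polynomial} is still at most $n$. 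I expect this degree audit, rather than any conceptual step, to be the only real obstacle, and the computation is a routine if slightly tedious index check.
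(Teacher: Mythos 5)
Your proposal is correct and takes essentially the same route as the paper: handle the cube and simplex cases via Remark~\ref{rmk: h-square-simplex}, then substitute the Eulerian expansion of $A(T(i,m_\ell),t)$ from Lemma~\ref{lem: T(pq)} into the formula of Theorem~\ref{thm: h-polynomial} and collect by Eulerian polynomial. The paper merely asserts the corollary without spelling out the degree audit, so your additional index check (verifying each coefficient polynomial has degree $\le n$) is a welcome elaboration rather than a departure.
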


For instance, the $h$-polynomials of $\pf(1, \dots, n)$ and $\pf(0, \dots, n-1)$ equal
\[1+ \sum^{n}_{k = 1}\binom{n}{k}t A_k(t) \text{ and } 1+ tA_n(t) + \sum^{n-2}_{k = 1}\binom{n}{k}t A_k(t), \text{ respectively}.\]

\section{Connection to other polytopes}\label{sec: connection-to-other-polytopes}

Let $\fS_n(\bbu) := \conv(\tau(\bbu)\ |\ \tau \text{ is a permutation in } \fS_n)$ be the $\fS_n$-permutohedron generated by $\bbu,$ where $\tau(\bbu) := (u_{\tau(1)}, \dots,u_{\tau(n)})$. It follows from Proposition \ref{prop: ineqDescription} that the parking function polytope $\pf(\bbu)$ can be equivalently defined as 
\begin{align}
\label{eq: pfu-description1}    \pf(\bbu) &= \{\bbx \in \R^n_{\geq 0}\ |\ \exists\, \bbw \in \fS_n(\bbu) \text{ such that } \bbw - \bbx \in \R^n_{\geq 0}\}\\
\label{eq: pfu-description2} &= (\R^n_{\leq 0} + \fS_n(\bbu))\cap \R^n_{\geq 0}.
\end{align}

\subsection{Viewed as polymatroid} \label{subsec:polymatroid}

Equations \eqref{eq: pfu-description1}--\eqref{eq: pfu-description2} allow us to see that every $\pf(\bbu)$ is a polymatroid introduced by Edmonds in \cite{Edmonds1970}. 
Recall that a \emph{polymatroid rank function on $[n]$} is a function 
$w: 2^{[n]}\longrightarrow \R$ on the power set of $[n]$ satisfying the following conditions:
\begin{enumerate}
    \item \label{itm: nonneg} (Nonnegative) $0 \leq w(I)$ for all $I \subseteq [n].$
    \item \label{itm: non-decreasing} (Non-decreasing) $w(I_1) \leq w(I_2)$ for all $I_1 \subseteq I_2 \subseteq [n]$
    \item \label{itm: submodular} (Submodular) $w(A) + w(B) \geq w(A\cup B) + w(A\cap B)$ for all $A, B \subseteq [n].$
\end{enumerate}
The \emph{polymatroid} $P(w)$ associated to a polymatroid rank function $w$ on $[n]$ is the polytope in $\R^{n}$ defined by the system of linear inequalities
\begin{equation}\label{eq:defnpolymatroid}
P(w)=\left\{ \bbx \in \R^n \ \left| \  0 \leq \sum_{i \in I}x_i \leq w_\bbu(I) \text{ for all nonempty } I \subseteq [n] \right. \right\}.
\end{equation}
The name ``polymatroid'' comes from its direct relation to matroids, the study concerning the abstraction of independent sets. 

For a given non-decreasing $\bbu \in \R_{\ge 0}^n$,  let $w_\bbu: 2^{[n]} \rightarrow \R$ be the function defined by
\[w_\bbu(\emptyset) = 0 \text{ and } w_\bbu(I) = \sum^{|I|}_{i = 0}u_{n-i} \text{ for all nonemmpty } I \subseteq [n].\]
It is easy to verify that $w_\bbu$ satisfies conditions \eqref{itm: nonneg}--\eqref{itm: submodular}, and thus is a polymatroid rank function. One can also verify that the inequality description of $\pf(\bbu)$ given in Proposition \ref{prop: ineqDescription} is equivalent to \eqref{eq:defnpolymatroid}. Hence, $\pf(\bbu) = P(w_\bbu)$ is a polymatroid.

\begin{rem}
    It was shown by Fujishige \cite[Theorem 17.1]{Fujishige2005submodular} that every polymatroid has edges paralell to $\bbe_i$ or $\bbe_i - \bbe_j$ for some positive integers $1 \leq i < j \leq n.$ Thus, every polymatroid, and consequently every parking function polytope, is a type $B$ generalized permutohedron.
\end{rem}

Let $\one_k := (0, \dots, 0, 1,\dots, 1) \in \R^n$ be the vector with the last $k$ coordinates equal to $1$ and the remaining coordinates equal to $0$. Then, every parking function polytope is a Minkowski sum of $\pf(\one_k)$ as shown in the following proposition.

\begin{prop}\label{prop: Minkowskisum-hypersimplices}
    Let $\bbu = (u_1, \dots, u_n) \in \R^n$. Then
    \begin{align}\label{eq: Minkowskisum-hypersimplices}
        \pf(\bbu) = u_{1}\pf(\one_n) + \sum_{k = 1}^{n-1} (u_{k+1}-u_{k})\pf(\one_{n-k}).
    \end{align}
\end{prop}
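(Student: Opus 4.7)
The plan is to verify the identity by comparing support functions. Recall that for a polytope $P \subset \R^n$, its support function is $h_P(\bbw) := \max_{\bbx \in P} \bbw \cdot \bbx$, and that support functions satisfy $h_{P+Q} = h_P + h_Q$ and $h_{cP} = c \cdot h_P$ for $c \geq 0$. Since $\bbu$ is a non-decreasing vector in $\R_{\geq 0}^n$, all of the scalars $u_1$ and $u_{k+1}-u_k$ appearing on the right-hand side of \eqref{eq: Minkowskisum-hypersimplices} are nonnegative, so the right-hand side is a genuine Minkowski sum.

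By Proposition \ref{prop: pf-stellahedron}, every parking function polytope $\pf(\bbv)$ is a deformation of $\pf(1,2,\dots,n)$, and any nonnegative linear combination of deformations of a fixed fan is again such a deformation. Hence both sides of \eqref{eq: Minkowskisum-hypersimplices} are deformations of the stellahedron, so each is determined by its support function restricted to the rays of $\Sigma(\pf(1,2,\dots,n))$. By Corollary \ref{cor: one-dim-comes}, those rays are generated by $-\bbe_i$ for $i \in [n]$ and by $\bbe_I$ for nonempty $I \subseteq [n]$. For the direction $-\bbe_i$, every $\pf(\bbv)$ contains $\mathbf{0}$ and lies in $\R_{\ge 0}^n$, so $h_{\pf(\bbv)}(-\bbe_i)=0$, and both sides agree trivially. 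For $\bbe_I$, Proposition \ref{prop: ineqDescription} yields
\[ h_{\pf(\bbu)}(\bbe_I) = \sum_{i=0}^{|I|-1} u_{n-i}, \quad h_{\pf(\one_n)}(\bbe_I) = |I|, \quad h_{\pf(\one_{n-k})}(\bbe_I) = \min(|I|,\, n-k). \]

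Thus the proposition reduces to verifying the scalar identity
\[ \sum_{i=0}^{m-1} u_{n-i} \;=\; u_1 \cdot m \;+\; \sum_{k=1}^{n-1}(u_{k+1}-u_k)\,\min(m,\, n-k) \]
for each $m = |I|$ with $1 \leq m \leq n$. The plan for this is to split the sum on the right at $k=n-m$, where $\min(m, n-k)$ transitions from $m$ to $n-k$: the portion $1 \leq k \leq n-m$ telescopes to $m(u_{n-m+1}-u_1)$, cancelling the $u_1 \cdot m$ term; the remaining portion, after the substitution $j=n-k$, reorganizes into $\sum_{j=1}^{m-1} j(u_{n-j+1}-u_{n-j})$, which via Abel summation collapses to $\sum_{i=0}^{m-1} u_{n-i} - m u_{n-m+1}$. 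Adding the two pieces gives $\sum_{i=0}^{m-1} u_{n-i}$, as required.

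The main content of the argument is the first step: Proposition \ref{prop: pf-stellahedron} guarantees that checking the Minkowski decomposition on both sides amounts to checking equality of polymatroid rank functions on all subsets of $[n]$. The remaining step is a routine telescoping calculation and should present no real obstacle.
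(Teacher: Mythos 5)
Your proof is correct, and it takes a genuinely different route from the paper's. The paper proves the proposition by a two-sided containment argument: for $Q\subseteq\pf(\bbu)$ it checks that any Minkowski-sum of points satisfies the facet inequalities of $\pf(\bbu)$ from Proposition~\ref{prop: ineqDescription}, and for $\pf(\bbu)\subseteq Q$ it observes that each $\bbu$-extreme point (vertex) decomposes as a sum $\bbx \in u_1\calX(\one_n) + \sum_{k}(u_{k+1}-u_k)\calX(\one_{n-k})$. Your proof instead rests on the structural observation that both sides are deformations of the stellahedral fan (Proposition~\ref{prop: pf-stellahedron} plus the fact that nonnegative Minkowski combinations of deformations are deformations), so that the identity reduces to matching the piecewise-linear support functions on the rays $\{-\bbe_i\}\cup\{\bbe_I\}$ of that fan, which Corollary~\ref{cor: one-dim-comes} identifies and Proposition~\ref{prop: ineqDescription} evaluates. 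Both arguments ultimately hinge on the same telescoping identity for $\sum_{i=0}^{m-1}u_{n-i}$ (the paper needs it to check the facet inequality in the $\supseteq$ containment, you need it to match the support function values), but you trade the paper's explicit vertex-decomposition step for the deformation-cone framework, which is perhaps more conceptual and makes the role of the stellahedron transparent. One minor point you leave implicit: a deformation of a complete fan is determined by its support function values on the rays because the support function is linear on each maximal cone; this is automatic here, but worth a sentence. Also note that the proposition requires $\bbu$ to be non-decreasing and nonnegative (otherwise some coefficients $u_{k+1}-u_k$ are negative and the right side is not a Minkowski sum), an assumption you correctly invoke.
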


\begin{proof}
    Let $Q$ be the right-hand side of equation \eqref{eq: Minkowskisum-hypersimplices}. Suppose that $\bby$ is a point in $Q.$ Then, $\bby = \bbx_1 + \cdots + \bbx_n$ where $\bbx_1 \in u_1\pf(\one_n)$ and $\bbx_k \in (u_{k+1} - u_{k})\pf(\one_k)$ for $1 \leq k \leq n-1.$ Using Proposition \ref{prop: ineqDescription} for $\pf(\bbu)$ and $\pf(\one_k)$, one deduces that $\bbx_1 + \cdots + \bbx_n$ satisfies the inequality description for $\pf(\bbu).$ Thus, we have $\bby \in \pf(\bbu)$. This shows $Q \subseteq \pf(\bbu).$

    Now suppose that $\bbx \in \calX(\bbu)$ is a $\bbu$-extreme point, a vertex of $\pf(\bbu)$. It is not difficult to see that 
    \[\bbx \in u_{1}\calX(\one_n) + \sum_{k = 1}^{n-1} (u_{k+1}-u_{k})\calX(\one_{n-k}) \subseteq Q.\] 
    This implies that $\pf(\bbu) \subseteq Q.$ 
\end{proof}

\subsection{Viewed as type \texorpdfstring{$A$}{A} generalized permutohedra}
A type $A$ generalized permutohedron in $\R^n$ is a polytope whose edges are paralell to $e_i - e_j$ for some $1 \leq i < j \leq n.$ One may also equivalently define it as a polytope whose normal fan coarsen the normal fan of the permutohedron $\fS_n(1, \dots, n)$ generated by $(1, \dots, n)$, i.e., a polytope that is a deformation of $\fS_n(1, \dots, n)$. Type $A$ generalized  permutohedra form a widely studied family of polytopes that interconnect with many mathematical concepts, such as matroids and Weyl groups. We refer readers to \cite{Postnikov2005} for more details on type $A$ generalized permutohedra.

As we shall see later in the subsection, every parking function polytope can be realized as a projection of a type $A$ generalized permutohedron. 
Moreover, every integral parking function polytope is integrally equivalent to an integral type $A$ generalized permutohedron. This realization was also pointed out in \cite{bayer2024parkingfunctionpolytopes} for $\pf(\bbm, \bbd)$ where $\bbm = (0,1,\dots, 1)$ and $(1,\dots, 1)$. Thus, many properties of generalized permutohedra also apply to parking function polytopes. In particular, one can compute the Ehrhart polynomials and the volume of parking function polytopes using existing formulas for type $A$ generalized  permutohedra.

Postnikov \cite[Proposition 6.3]{Postnikov2005} and Ardila et al. \cite[Proposition 2.3]{Ardila2010} show that every type $A$ generalized permutohedron can be written a Minkowski sum (and difference) of simplices. That is, every type $A$ generalized  permutohedron has the form
\begin{align}\label{eq: typeA-Minkowski-sum}
    \sum_{I \in 2^{[n]}\backslash\{\emptyset\}}y_I\Delta_I \text{ for some } y_I \in \R,
\end{align}
where $\Delta_I := \conv(\bbe_i\ | \ i \in I)$. Moreover, in Sections 9 and 11 of \cite{Postnikov2005}, Postnikov gave formulas for the volume and the Ehrhart polynomial of type $A$ generalized permutohedron in \eqref{eq: typeA-Minkowski-sum}. We shall apply Postnikov's formulas to parking function polytopes to obtain their volume and Ehrhart polynomials. To do this, we first write $\pf(\bbu)$ as in \eqref{eq: typeA-Minkowski-sum}. We introduce a simplex in $\R^{n}$ that is related to $\Delta_I$:
\[\Delta_I^0 := \conv(\mathbf{0}, \Delta_I).\]
\begin{prop}\label{prop: pfu-Minkowski-sum}
	The parking function polytope 
    \[\pf(\bbu) = \sum_{I \in 2^{[n]}\backslash\{\emptyset\}} y_I\Delta_I^0\]
    where 
    \begin{align}\label{eq: y_I-simplices}
	    y_I := \sum^{|I|-1}_{j = 0}\binom{|I| - 1}{j}(-1)^ju_{|I|-j}.
    \end{align}
\end{prop}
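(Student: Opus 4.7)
My plan is to verify the stated identity via support functions. Since some $y_I$ may be negative, the equation $\pf(\bbu) = \sum_I y_I \Delta_I^0$ is to be read as the polytope identity $\pf(\bbu) + \sum_{y_I < 0}(-y_I)\Delta_I^0 = \sum_{y_I > 0} y_I \Delta_I^0$, which is equivalent to the support-function identity $h_{\pf(\bbu)}(\bbv) = \sum_I y_I h_{\Delta_I^0}(\bbv)$ for every $\bbv \in \R^n$, where $h_P(\bbv) := \max_{\bbx \in P} \bbv \cdot \bbx$.

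The first step is to reduce to $\bbv \in \R^n_{\geq 0}$. Writing $v_i^+ := \max(v_i, 0)$, both $\pf(\bbu)$ (a polymatroid, by Section~\ref{subsec:polymatroid}) and each $\Delta_I^0$ lie in $\R^n_{\geq 0}$, contain the origin, and are downward-closed in $\R^n_{\geq 0}$. This forces $h_P(\bbv) = h_P(\bbv^+)$ for $P = \pf(\bbu)$ and $P = \Delta_I^0$, so the support-function identity only needs to be checked on $\bbv \geq \0$.

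For such $\bbv$, sort coordinates so that $v_{\pi(1)} \geq \cdots \geq v_{\pi(n)}$. The greedy algorithm applied to the polymatroid $\pf(\bbu)$ with rank function $w_\bbu(I) = \sum_{j=0}^{|I|-1} u_{n-j}$ yields $h_{\pf(\bbu)}(\bbv) = \sum_{i=1}^n u_{n-i+1}\, v_{\pi(i)}$. For $\bbv \geq \0$ one has $h_{\Delta_I^0}(\bbv) = \max_{i \in I} v_i$, and classifying each $t$-subset $I$ by $k := \min\{j : \pi(j) \in I\}$ (so that $\max_{i \in I} v_i = v_{\pi(k)}$, with $\binom{n-k}{t-1}$ such subsets $I$) gives $\sum_I y_I h_{\Delta_I^0}(\bbv) = \sum_{k=1}^n v_{\pi(k)} \sum_{t=1}^n \binom{n-k}{t-1} y_t$, where $y_t$ denotes the common value of $y_I$ for $|I| = t$ (the formula \eqref{eq: y_I-simplices} visibly depends only on $|I|$).

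Matching the coefficients of $v_{\pi(k)}$ reduces the identity to the scalar claim $u_s = \sum_{t=1}^s \binom{s-1}{t-1} y_t$ for $s = n-k+1$, which is the finite-difference inversion of the defining formula $y_t = \sum_{j=0}^{t-1} \binom{t-1}{j}(-1)^j u_{t-j}$: letting $E$ denote the shift operator $E u_i := u_{i+1}$, the definition reads $y_t = (E-1)^{t-1} u_1$, and the binomial theorem yields $u_s = E^{s-1} u_1 = (1 + (E-1))^{s-1} u_1 = \sum_{t=1}^s \binom{s-1}{t-1} y_t$. I do not anticipate a significant obstacle; the polymatroid structure simultaneously delivers the downward-closedness used in the reduction to $\bbv \geq \0$ and the greedy formula for $h_{\pf(\bbu)}$, and the coefficient-matching boils down to this elementary binomial inversion.
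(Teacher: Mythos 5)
Your proof is correct and takes a genuinely different route from the paper's. The paper lifts $\pf(\bbu)$ to a type~$A$ generalized permutohedron $P(\{y'_J\})$ in $\R^{[0,n]}$, recovers its inequality description from Postnikov's Minkowski-sum formula (the crux being the convolution identity $\sum_{j=0}^{n-|I|-k}\binom{n-|I|}{j+k}\binom{j+k-1}{j}(-1)^j=1$, which yields $\sum_{J\subseteq[0,n]\setminus I}y'_J=\sum_{k=1}^{n-|I|}u_k$), matches this against Proposition~\ref{prop: ineqDescription}, and projects back to $\R^n$ via $\pi$. You instead work entirely in $\R^n$ with support functions: you reduce to $\bbv\ge\0$ using that $\pf(\bbu)$ and each $\Delta_I^0$ are downward-closed in the nonnegative orthant, compute $h_{\pf(\bbu)}(\bbv)=\sum_i u_{n-i+1}v_{\pi(i)}$ by the polymatroid greedy algorithm, evaluate $\sum_I y_I h_{\Delta_I^0}(\bbv)$ by grouping subsets $I$ according to which $\pi(k)$ attains $\max_{i\in I}v_i$, and close with the shift-operator inversion $u_s=(1+(E-1))^{s-1}u_1$. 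Your route is more elementary and self-contained --- it avoids the lift to $\R^{[0,n]}$ and the appeal to Postnikov's description of $P(\{y'_J\})$, needing only the greedy theorem for polymatroids (already implicit in the paper's identification $\pf(\bbu)=P(w_\bbu)$) and binomial inversion --- and the support-function identity automatically certifies that the signed Minkowski combination $\sum_I y_I\Delta_I^0$ is well-defined. The trade-off is that the paper's lift is not incidental: the fact that $\pf(\bbu)$ is integrally equivalent to $P(\{y'_J\})$, recorded in Remark~\ref{rem:pfu-typeA}, is precisely what is reused to derive the Ehrhart and volume formulas in Corollaries~\ref{cor: ehrhart-pfu} and~\ref{cor: volume-pfu}, which your approach would not directly supply.
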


Equation \eqref{eq: y_I-simplices} implies that for $I_1, I_2 \in 2^{[n]}\backslash\{\emptyset\}$ such that $|I_1|= |I_2|,$ one has $y_{I_1} = y_{I_2}.$ Moreover, if $\pf(\bbu)$ is integral, then $y_I$ are integers for all $I \in 2^{[n]}\backslash\{\emptyset\}$.

Before proceeding to the proof of the proposition, we apply the result to express the parking function polytopes of a family that includes the classical cases introduced in \cite{Amanbayeva_2021} and generalized cases from \cite{bayer2024parkingfunctionpolytopes, hanada2024}.

\begin{ex}
	Let $n \geq 2$ be an integer. Suppose that the coordinates of $\bbu$ form an arithmetic sequence of nonnegative real numbers. That is, $\bbu = (p, p+q, p+2q, \dots, p+(n-1)q)$ for some nonnegative real numbers $p,q$ such that $(p,q) \neq (0,0).$ 
 Then,
    \begin{align*}
    y_I =
        \begin{cases}
            p & \text{ if } |I| = 1\\
            q & \text{ if } |I| = 2\\
            0 & \text{ otherwise }
        \end{cases}.
    \end{align*}
    Thus, 
    \[\pf(\bbu) = \sum_{i = 1}^n p\Delta^0_{\{i\}} + \sum_{1\leq i < j \leq n}q \Delta^0_{\{i,j\}}.\]
\end{ex}

\begin{proof}[Proof of Proposition \ref{prop: pfu-Minkowski-sum}]
	Recall that Postnikov \cite[Proposition 6.3]{Postnikov2005} and Ardila et al. \cite[Proposition 2.3]{Ardila2010} give an expression for type A generalized permutohedron in $\R^{[0,n]}$ as
    \begin{align}
        \label{eq: typeA-projection} P(\{y'_{J}\}) = \sum_{J \in 2^{[0,n]}} y'_J\Delta_J
    \end{align}
    for some $y'_J \in \R$. 
By \cite[Section 6]{Postnikov2005}, the polytope $P(\{y'_{J}\})$ can be equivalently expressed as the set of all $(x_0, x_1, \dots, x_n) \in \R^{[0,n]}_{\geq 0}$ satisfying the following inequalties

\[x_0 + \cdots + x_n = \sum_{J \subseteq {[0,n]}} y'_J \text{ and } \sum_{i \in I}x_i \leq \left( \sum_{J \subseteq {[0,n]}} y'_J \right) - \left(\sum_{J \subseteq [0,n]\backslash I} y'_J \right) \text{ for all } \emptyset \neq I \subseteq [n].\]
Now we choose $\{ y'_J\}$ as follows 
\begin{equation}\label{eq:defny'}
       y'_J = 
        \begin{cases}
		y_{J \setminus \{0\}} \text{ as defined in \eqref{eq: y_I-simplices}}, &\text{ if $0 \in J$;} \\
		0 &\text{ otherwise.}
        \end{cases}
    \end{equation}
Then, for every proper subset $I$ of $[n]$  
    \begin{align*}
        \sum_{J \subseteq [0,n]\backslash I} y'_J = \sum_{J \subseteq [n]\backslash I} y_{J}
        &=\sum_{J \subseteq [n]\backslash I}\left[\sum^{|J|-1}_{j = 0}\binom{|J| - 1}{j}(-1)^ju_{|J|-j}\right]\\
        &=\sum^{n-|I|}_{k  = 1}u_{k}\left[\sum^{n-|I|-k}_{j=0}\binom{n-|I|}{j+k}\binom{j+k-1}{j}(-1)^j\right]\\
        &=\sum^{n-|I|}_{k  = 1}u_{k},
    \end{align*}
    since 
\[\sum^{n-|I|-k}_{j=0}\binom{n-|I|}{j+k}\binom{j+k-1}{j}(-1)^j = 1\]
is the coefficient of $x^{n-|I|-k}$ in the product of the generating functions $(1+x)^{n-|I|}$ and $(1+x)^{-k}.$ 
Therefore, with this particular choice of $\{y_J'\},$ 
the polytope $P(\{y'_{J}\})$ is the set of all $(x_0, x_1, \dots, x_n) \in \R^{[0,n]}_{\geq 0}$ satisfying the following inequalties
\begin{align*}
    x_0 + \cdots + x_n &= \sum_{J \subseteq {[0,n]}} y'_J = u_1 + \cdots + u_n, \\
    \sum_{i \in I}x_i &\leq \left( \sum_{J \subseteq {[0,n]}} y'_J \right) - \left(\sum_{J \subseteq [0,n]\backslash I} y'_J \right)= \sum^{|I|-1}_{j = 0}u_{n-j}  & \quad \text{ for all } \emptyset \neq I \subseteq [n].
\end{align*}

Let $\pi: \R^{[0,n]} \longrightarrow \R^n$ be the projection of $\R^{[0,n]}$ onto $\R^n$ defined by $\pi(x_0, x_1, \dots, x_n) = (x_1, \dots, x_n).$
It is then not difficult to see that $\pi(P(\{y'_J\}))$ is a polytope in $\R^n$ defined by
\begin{align*}
0 \leq x_i \text{ for all } i \in [n] \text{ and } \sum_{i \in I}x_i \leq \sum^{|I|-1}_{j = 0}u_{n-j} \text{ for all } \emptyset \neq I \subseteq [n].
\end{align*}
This is the inequality description for $\pf(\bbu)$ given in Proposition \ref{prop: ineqDescription}. Thus, $\pi(P(\{y'_J\})) = \pf(\bbu)$ and so we have 
\[\pf(\bbu) = \pi(P(\{y'_J\})) = \sum_{J \in 2^{[0,n]}\backslash\{\emptyset\}} \pi(y'_J\Delta_J) = \sum_{I \in 2^{[n]}\backslash\{\emptyset\}} y_I\Delta_I^0\]
as desired.
\end{proof}

\begin{rem}\label{rem:pfu-typeA}
	The projection $\pi$ defined in the proof of Proposition \ref{prop: pfu-Minkowski-sum} is an invertible linear transformation from $\mathrm{aff}(P(\{y'_J\})) = \{\bbx \in \R^{[0,n]}\mid x_0+ x_1 + \cdots + x_n = u_1 + \cdots + u_n\}$ to $\mathrm{aff}(\pf(\bbu)) = \R^n$ that gives a bijection between lattice points in these spaces. This implies that every integral parking function polytope $\pf(\bbu)$ is integrally equivalent to the type $A$ generalized permutohedron $P(\{y'_J\})$ defined in equation \eqref{eq: typeA-projection} where $\{y_J'\}$ is chosen as in \eqref{eq:defny'}. More explicitly, if $\pf(\bbu) = \sum_{i=1}^m y_{i}\Delta^0_{I_i}$, then it is integrally equivalent to the type A generalized permutohedron defined by
		$\sum_{i=1}^m y_{i}\Delta_{\{0\} \cup I_i}$.
		 Hence, they have the same Ehrhart polynomial. 
\end{rem}

This allows us to compute the Ehrhart polynomials of parking function polytopes using Postnikov's formula \cite[Theorem 11.3]{Postnikov2005}.

\begin{cor}\label{cor: ehrhart-pfu}
    Suppose that $\pf(\bbu) = y_{1}\Delta^0_{I_1} + \cdots y_{m}\Delta^0_{I_m}$ where ${I_1}, \dots, {I_m}$ are distinct nonempty subsets of $[n]$ such that  $y_{1}, \dots, y_{m}$ given in equation \eqref{eq: y_I-simplices} are all nonzero integers. Then the Ehrhart polynomial of $\pf(\bbu)$ is given by
    \begin{align}\label{eq: ehrhart-pfu}
       i(\pf(\bbu),t) = \sum_{\bba \in D(\pf(\bbu))}\binom{ty_1 + a_1 - 1}{a_1}\cdots \binom{ty_m + a_m - 1}{a_m}
    \end{align}
where $D(\pf(\bbu))$ is the set of all $\bba = (a_1, \dots, a_m) \in \Z^m_{\geq 0}$ satisfying $\sum_{j \in J}a_j \leq |\bigcup_{j \in J}I_j|$ for all nonempty subsets $J \subseteq [m]$.
\end{cor}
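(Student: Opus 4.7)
The plan is to transfer the computation to the type $A$ setting, where Postnikov's Ehrhart formula for generalized permutohedra is available, and then translate the combinatorial constraints back in terms of the subsets $I_1,\dots,I_m$.

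First, I would invoke Remark \ref{rem:pfu-typeA}: starting from the expression $\pf(\bbu)=\sum_{i=1}^m y_i\Delta^0_{I_i}$ given by Proposition \ref{prop: pfu-Minkowski-sum}, the projection $\pi:\R^{[0,n]}\to\R^n$ restricts to an integral equivalence between $\pf(\bbu)$ and the type $A$ generalized permutohedron
\[
\widetilde{P} \;:=\; \sum_{i=1}^m y_i\,\Delta_{\{0\}\cup I_i} \;\subset\; \R^{[0,n]}.
\]
Since integrally equivalent polytopes share the same Ehrhart polynomial, it suffices to compute $i(\widetilde{P},t)$.

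Next, I would apply Postnikov's Ehrhart formula for type $A$ generalized permutohedra written as signed Minkowski sums of simplices \cite[Theorem 11.3]{Postnikov2005}. Writing $J_i:=\{0\}\cup I_i$, that theorem yields
\[
i(\widetilde{P},t) \;=\; \sum_{\bba} \prod_{i=1}^m \binom{ty_i+a_i-1}{a_i},
\]
where the sum ranges over $\bba=(a_1,\dots,a_m)\in\Z^m_{\ge 0}$ satisfying the dragon-marriage-type inequalities $\sum_{j\in J}a_j\le \bigl|\bigcup_{j\in J}J_j\bigr|-1$ for every nonempty $J\subseteq[m]$.

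The final step is purely bookkeeping: because every $J_i$ contains $0$ while $I_i\subseteq[n]$, one has $\bigl|\bigcup_{j\in J}J_j\bigr|=1+\bigl|\bigcup_{j\in J}I_j\bigr|$ for every nonempty $J\subseteq[m]$. Substituting this into Postnikov's inequalities gives exactly the constraints $\sum_{j\in J}a_j\le \bigl|\bigcup_{j\in J}I_j\bigr|$ defining $D(\pf(\bbu))$, and \eqref{eq: ehrhart-pfu} follows.

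The main subtlety I anticipate is that the coefficients $y_i$ coming from \eqref{eq: y_I-simplices} need not all be nonnegative (for instance, $y_I=u_3-2u_2+u_1$ when $|I|=3$ can be negative for a non-decreasing $\bbu$), so $\widetilde{P}$ may be a genuine Minkowski difference rather than a Minkowski sum. To apply \cite[Theorem 11.3]{Postnikov2005} I would cite (or verify) its extension to signed $y$-coefficients, observing that the binomial expression $\binom{ty_i+a_i-1}{a_i}$ is a polynomial in $t$ that makes sense for any integer $y_i$, and that the formula is proved by a Brion/valuative identity that is multilinear in the $y_i$. This multilinearity (together with the fact that $\widetilde{P}$ is genuinely a polytope) is what ultimately allows the nonnegativity hypothesis to be dropped.
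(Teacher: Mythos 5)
Your overall strategy is the same as the paper's — transfer to the type $A$ setting via Remark~\ref{rem:pfu-typeA} and then invoke Postnikov's Theorem~11.3 — and your final bookkeeping observation $\bigl|\bigcup_{j\in J}(\{0\}\cup I_j)\bigr| = 1 + \bigl|\bigcup_{j\in J}I_j\bigr|$ matches the paper. However, you apply Theorem~11.3 directly to $\widetilde{P} = \sum_{i=1}^m y_i\Delta_{\{0\}\cup I_i}$, whereas the paper first forms the auxiliary polytope $Q_t^+ := \Delta_{[0,n]} + t\widetilde{P}$, which adds a unit-coefficient copy of the full simplex $\Delta_{[0,n]}$ as an $(m{+}1)$st Minkowski summand (with $y_0=1$, $I_0=[n]$). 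Applying Theorem~11.3 to $Q_t^+$ produces a sum over $(m{+}1)$-tuples $(a_0,a_1,\dots,a_m)\in\Z^{m+1}_{\geq 0}$ whose coordinates sum to $n$; the paper then drops $a_0$ using $\binom{1+a_0-1}{a_0}=1$ and the observation that $a_0 = n - \sum_{j\geq 1}a_j \geq 0$ is implied by the $J=[m]$ subset inequality. This auxiliary step is not cosmetic: Postnikov's $G$-draconian sequences carry the equality constraint $\sum_j a_j = |V|-1$, and it is precisely the extra $a_0$ slot attached to the full simplex that relaxes this into the inequality $\sum_{j=1}^m a_j \leq n$ appearing in the corollary. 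Your proposal states the formula as a sum over $m$-tuples subject only to the subset inequalities — i.e., the \emph{post}-simplification form — without deriving it; a direct application of Theorem~11.3 to $\widetilde{P}$ (whose hypergraph need not contain the full hyperedge $[0,n]$, since $y_{[n]}$ can be zero) would impose $\sum_{j=1}^m a_j = n$ and give a strictly smaller sum. For instance, with $\bbu=(1,1)$ one has $\pf(\bbu)=\Delta^0_{\{1\}}+\Delta^0_{\{2\}}$, the correct Ehrhart polynomial $(t+1)^2$, and $D(\pf(\bbu)) = \{0,1\}^2$, but the equality-constrained set contributes only the single tuple $(1,1)$, giving $t^2$. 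So the missing idea is precisely the paper's $Q_t^+$ device.

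On the other hand, your remark about possibly negative $y_i$ (e.g.\ $y_I = u_3-2u_2+u_1$ can be negative) and the need to extend Theorem~11.3 by multilinearity/valuativity is a legitimate concern that the paper's proof passes over silently; this part of your proposal is a useful addition rather than a flaw.
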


\begin{proof}
	By Remark \ref{rem:pfu-typeA}, the parking function polytope $\pf(\bbu) \subset \R^n$ has the same Ehrhart polynomial as the type A generalized permutohedron  
		\[ Q := y_{1}\Delta_{\{0\} \cup I_1} + \cdots y_{m}\Delta_{\{0\} \cup I_m}.\] 
	Let $y_0 = 1$ and $I_0 = [n]$, and define
	\[ Q^+_t := y_0\Delta_{\{0\} \cup I_0} + t Q = \Delta_{\{0\} \cup I_0} + ty_{1}\Delta_{\{0\} \cup I_1} + \cdots ty_{m}\Delta_{\{0\} \cup I_m}.\]
	Applying Theorem 11.3 of \cite{Postnikov2005} to $Q_t^+$, we deduce that
    \begin{align}
        i(\pf(\bbu),t) &= i(Q,t)\nonumber\\
\label{eq: ehrhart_typeA}        &= \sum_{\bba \in \Bar{D}(Q^+_t)}\binom{1+ a_0-1}{a_0}\binom{ty_1 + a_1 - 1}{a_1}\cdots \binom{ty_m + a_m - 1}{a_m}
    \end{align}
    where $\Bar{D}(Q^+_t)$ is the set of $\bba = (a_0, a_1, \dots, a_m) \in \Z^{m+1}_{\geq 0}$ satisfying $a_0 +a_1 + \cdots + a_m = n$ and $\sum_{j \in J}a_j \leq |\bigcup_{j \in J}I_j|$ for all nonempty subsets $J \subseteq [m]$. Note that the sequences $\bba \in \Bar{D}(Q_t^+)$ are called ``$G$-draconian sequences of $Q_t^+$''. Since $\binom{1+ a_0-1}{a_0} = 1$ for all nonnegative integers $a_0,$ we can rewrite \eqref{eq: ehrhart_typeA} as
    \[i(\pf(\bbu),t) = \sum_{\bba \in D(\pf(\bbu))}\binom{ty_1 + a_1 - 1}{a_1}\cdots \binom{ty_m + a_m - 1}{a_m}\]
    where $D(\pf(\bbu))$ is the set of all $\bba = (a_1, \dots, a_m) \in \Z^{m}_{\geq 0}$ satisfying $\sum_{j \in J}a_j \leq |\bigcup_{j \in J}I_j|$ for all nonempty subsets $J \subseteq [m]$. This gives the formula in equation \eqref{eq: ehrhart-pfu} as desired.
\end{proof}

We note that one can use Proposition \ref{prop: pfu-Minkowski-sum} to derive another formula for the Ehrhart polynomials of parking function polytopes by applying Theorem $a/(c)$ in \cite{Euretall2024} given by Eur et al. for type $B$ generalized permutohedra. However, we find that Postniknov's formula for type $A$ generalized permutohedra gives a more desirable expression in the sense that when all $y_I$ given in equation \eqref{eq: y_I-simplices} are nonnegative, one easily sees from formula \eqref{eq: ehrhart-pfu} that all the coefficients of $i(\pf(\bbu),t)$ are positive. This positivity of coefficients is, however, not easily seen when expressing $i(\pf(\bbu),t)$ using the formula provided by Eur et al.

The leading coefficient of the Ehrhart polynomial in \eqref{eq: ehrhart-pfu} gives a formula for the volume of $\pf(\bbu)$.

\begin{cor}\label{cor: volume-pfu}
    Suppose that $\pf(\bbu)$ is $n$-dimensional and that $\pf(\bbu) = y_{1}\Delta^0_{I_1} + \cdots y_{m}\Delta^0_{I_m}$ where ${I_1}, \dots, {I_m}$ are distinct nonempty subsets of $[n]$ such that  $y_{1}, \dots, y_{m}$ given in equation \eqref{eq: y_I-simplices} are all nonzero. Then, the volume of $\pf(\bbu)$ is given by
    \begin{align}\label{eq: volume-pfu}
       \sum_{\substack{\bba \in D(\pf(\bbu)) \\ a_1 +\cdots + a_m = n} }\frac{y_1^{a_1}}{a_1!}\cdots \frac{y_m^{a_m}}{a_m!}.
    \end{align}
\end{cor}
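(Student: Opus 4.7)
The plan is to extract the leading coefficient of the Ehrhart polynomial formula provided by Corollary \ref{cor: ehrhart-pfu}. Recall that for an integral polytope $P$ of dimension $n$, the leading coefficient of $i(P,t)$ (as a polynomial in $t$) equals $\vol(P)$. So the strategy is: first establish the formula when $\bbu$ is such that all $y_i$ are nonzero integers (so $\pf(\bbu)$ is integral), then extend to arbitrary $\bbu$ by a polynomial-identity / density argument.

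For the integral case, I would analyze each factor in the summand of \eqref{eq: ehrhart-pfu}. Writing
\[
\binom{ty_j + a_j - 1}{a_j} = \frac{1}{a_j!}\prod_{s=0}^{a_j-1}\bigl(ty_j + s\bigr),
\]
one sees this is a polynomial in $t$ of degree $a_j$ with leading coefficient $y_j^{a_j}/a_j!$. Multiplying across $j=1,\dots,m$, the summand indexed by $\bba=(a_1,\dots,a_m)\in D(\pf(\bbu))$ is a polynomial of degree $a_1+\cdots+a_m$ with leading coefficient $\prod_{j=1}^m y_j^{a_j}/a_j!$. Since $\pf(\bbu)$ is $n$-dimensional, $i(\pf(\bbu),t)$ has degree exactly $n$, so each term contributes to the $t^n$-coefficient only through its leading term when $a_1+\cdots+a_m = n$. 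Collecting these yields precisely the right-hand side of \eqref{eq: volume-pfu}, proving the integral case.

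To extend to general $\bbu$, I would use a scaling argument. For any positive integer $N$, one checks directly from the definition that $\pf(N\bbu) = N\cdot \pf(\bbu)$ and that $y_I(N\bbu) = N\, y_I(\bbu)$ (since $y_I$ is linear in $\bbu$ by \eqref{eq: y_I-simplices}); hence $\vol(\pf(N\bbu)) = N^n\vol(\pf(\bbu))$. For rational $\bbu$, pick $N$ so that $N\bbu$ produces integer $y_i$'s; the integral case gives
\[
N^n\vol(\pf(\bbu))=\vol(\pf(N\bbu)) = \sum_{\substack{\bba\in D(\pf(\bbu))\\ a_1+\cdots+a_m=n}}\frac{(Ny_1)^{a_1}}{a_1!}\cdots\frac{(Ny_m)^{a_m}}{a_m!}=N^n\cdot(\text{RHS of \eqref{eq: volume-pfu}}).
\]
Dividing by $N^n$ proves the identity for rational $\bbu$. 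Both sides are polynomial functions of $\bbu$ on the open set where the collection of nonzero $y_I$'s (hence the index set $D(\pf(\bbu))$) is locally constant, so the identity extends to all real $\bbu$ in that chamber by Zariski density of the rationals.

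The only subtle point worth checking carefully is that the index set $D(\pf(\bbu))$ really depends only on the support $\{I_1,\dots,I_m\}$ (i.e.~on which $y_I$'s are nonzero) and not on the numerical values of the $y_i$'s; this is clear from its definition in Corollary \ref{cor: ehrhart-pfu}. Everything else is routine: a calculation of the leading coefficient of a product of binomial polynomials, plus continuity/homogeneity of volume.
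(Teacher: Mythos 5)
Your core argument --- extract the leading coefficient of the Ehrhart polynomial from Corollary~\ref{cor: ehrhart-pfu}, observing that each factor $\binom{ty_j+a_j-1}{a_j}$ has degree $a_j$ in $t$ with leading coefficient $y_j^{a_j}/a_j!$, and that only terms with $a_1+\cdots+a_m=n$ contribute to the $t^n$ coefficient --- is exactly what the paper does for the integral case. (One small remark worth making explicit in both your write-up and the paper's: taking $J=[m]$ in the defining inequalities for $D(\pf(\bbu))$ shows $a_1+\cdots+a_m\le n$ always, so no term of total degree exceeding $n$ exists.)

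Where you diverge is in the extension to non-integral $\bbu$. The paper appeals to Ewald's theorem that $\vol\bigl(\sum_i y_i\Delta^0_{I_i}\bigr)$ is a polynomial in $y_1,\dots,y_m$; since the two polynomials agree on the lattice points, they agree identically. You instead use a homogeneity argument: $\pf(N\bbu)=N\,\pf(\bbu)$ and $y_I(N\bbu)=N\,y_I(\bbu)$, so $\vol$ scales by $N^n$ and the right-hand side of \eqref{eq: volume-pfu} is homogeneous of degree $n$, which settles the rational case; continuity of volume (Hausdorff continuity suffices --- you don't actually need polynomiality or ``Zariski density'' on the left-hand side) then handles real $\bbu$. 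Your route is more elementary in that it avoids citing the mixed-volume polynomial expansion, at the modest cost of the extra density/continuity step; the paper's route is shorter given Ewald as a black box. One fine point in the density step that you gesture at but should make precise: because $y_I$ depends only on $|I|$, the locus $\{\bbu : y_I=0 \iff I\notin\{I_1,\dots,I_m\}\}$ is an open subset of a rational affine subspace (not an open subset of $\R^n$), so the rationals you approximate by must be taken inside that affine subspace. This is easy to arrange but needs saying; without it ``Zariski density of the rationals'' is not quite the right invocation.
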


\begin{proof}
    Suppose that $\pf(\bbu)$ is $n$-dimensional. The leading coefficient of the Ehrhart polynomial of $\pf(\bbu)$ in \eqref{eq: ehrhart-pfu} are contributed by the terms corresponding to those $\bba \in D(\pf(\bbu))$ satisfying $a_1 + \cdots + a_m = n$. Since the volume of $\pf(\bbu)$ is equal to the leading coefficient of its Ehrhart polynomial, we have
    \begin{align}
       \vol(\pf(\bbu)) = \sum_{\substack{\bba \in D(\pf(\bbu)) \\ a_1 +\cdots + a_m = n} }\frac{y_1^{a_1}}{a_1!}\cdots \frac{y_m^{a_m}}{a_m!}.
    \end{align}
    By Theorem 3.2 of \cite{Ewald1996} concerning the volume of Minkowski sums of polytopes, the volume of $\pf(\bbu)$ is a polynomial in variables $y_1, \dots, y_m$. This implies that formula \eqref{eq: volume-pfu} also extends to non-integral $\pf(\bbu).$
\end{proof}

\bibliographystyle{abbrv}
\bibliography{BibContainer}

\end{document}